\let\pa\partial  
\let\na\nabla  
\newcommand{\N}{{\mathbb N}}  
\newcommand{\R}{{\mathbb R}} 
\newcommand{\diver}{\operatorname{div}}  
\newcommand{\dom}{{\mathcal O}}
\newcommand{\F}{{\mathcal F}}
\newcommand{\E}{{\mathbb E}}
\renewcommand{\L}{{\mathcal L}}
\newcommand{\Prob}{\mathbb{P}}
\renewcommand{\d}{\textnormal{d}}
\newtheorem{theorem}{Theorem}   
\newtheorem{lemma}[theorem]{Lemma}
\newtheorem{remark}[theorem]{Remark}   
\newtheorem{corollary}[theorem]{Corollary}
\begin{document}  

\title[Stochastic Shigesada--Kawasaki--Teramoto model]{Global martingale solutions for 
a stochastic Shigesada--Kawasaki--Teramoto population model} 

\author[G. Dhariwal]{Gaurav Dhariwal}
\address{Institute for Analysis and Scientific Computing, Vienna University of  
	Technology, Wiedner Hauptstra\ss e 8--10, 1040 Wien, Austria}
\email{gaurav.dhariwal@tuwien.ac.at} 

\author[F. Huber]{Florian Huber}
\address{Institute for Analysis and Scientific Computing, Vienna University of  
	Technology, Wiedner Hauptstra\ss e 8--10, 1040 Wien, Austria}
\email{florian.huber@asc.tuwien.ac.at}

\author[A. J\"ungel]{Ansgar J\"ungel}
\address{Institute for Analysis and Scientific Computing, Vienna University of  
	Technology, Wiedner Hauptstra\ss e 8--10, 1040 Wien, Austria}
\email{juengel@tuwien.ac.at} 

\date{\today}

\thanks{All authors acknowledge partial support from   
the Austrian Science Fund (FWF), grants I3401, P30000, P33010, W1245, and F65.} 

\begin{abstract}
The existence of global nonnegative martingale solutions to a cross-diffusion system of
Shigesada--Kawasaki--Teramoto type with multiplicative noise is proven. 
The model describes the segregation dynamics of populations with an
arbitrary number of species. The diffusion matrix is generally 
neither symmetric nor positive semidefinite, which excludes standard methods.
Instead, the existence proof is based on the entropy structure of the model,
approximated by a Wong--Zakai argument, and on suitable higher moment estimates
and fractional time regularity.
In the case without self-diffusion, the lack of regularity is overcome by
carefully exploiting the entropy production terms. 
\end{abstract}

\keywords{Population dynamics, cross diffusion, martingale solutions, 
multiplicative noise, entropy method, Wong--Zakai approximation.}  
 
\subjclass[2000]{60H15, 35R60, 35Q92.}

\maketitle


\section{Introduction}

Shigesada, Kawasaki, and Teramoto (SKT) suggested in their seminal paper \cite{SKT79}
a deterministic cross-diffusion system for two competing species, 
which is able to describe the segregation of the populations. A random
influence of the environment or the lack of knowledge of certain biological parameters
motivate the introduction of noise terms, leading to the system
\begin{equation}\label{1.skt}
\begin{aligned}
  & \d u_1 - \Delta\big(a_{10}u_1 + a_{11}u_1^2 + a_{12}u_1u_2\big)\d t
  = \sigma_1(u_1)\d W_1(t), \\
  & \d u_2 - \Delta\big(a_{20}u_2 + a_{21}u_1u_2 + a_{22}u_2^2\big)\d t
  = \sigma_2(u_2)\d W_2(t)\quad\mbox{in }\dom,\ t>0,
\end{aligned}
\end{equation}
where $u_i=u_i(\omega,x,t)$ describes the density of the $i$th species ($i=1,2$),
$\omega\in\Omega$ is the stochastic variable, $x\in\dom$
is the spatial variable, and $t\ge 0$ is the time, $a_{ij}\ge 0$ are
some parameters, $(W_1,W_2)$ is a two-dimensional Wiener process,
and $\dom\subset\R^d$ ($d\ge 1$) is a bounded domain. An admissible example
of the stochastic diffusion term is 
\begin{equation}\label{1.sigma}
  \sigma_i(u_i) = \frac{u_i}{1+u_i^{1-\gamma}}, \quad\mbox{where }
	0<\gamma\le 1,\ i=1,2.
\end{equation}
Details on the stochastic framework will be given in Section \ref{sec.main}.
The equations are supplemented with 
initial and no-flux boundary conditions (see \eqref{1.bic} below).
The original system in \cite{SKT79} also contains a deterministic environmental
potential, which are neglected here for simplicity.

The key difficulty of system \eqref{1.skt} is the fact that the diffusion matrix
associated to \eqref{1.skt} is generally neither symmetric nor positive semidefinite. 
In particular, standard semigroup theory is not applicable. These issues
have been overcome in \cite{ChJu04,ChJu06} in the deterministic case
by revealing a formal gradient-flow or entropy structure. 
The task is to extend this idea to the stochastic setting.

The aim of this work is to prove the existence of global nonnegative
martingale solutions to system \eqref{1.skt}. The paper is a continuation 
of our previous works \cite{DJZ19,DHJKN20}. The work \cite{DJZ19} was concerned
with a SKT-type system, in which the coefficients of the associated diffusion
matrix depend quadratically on the densities (and not linearly as in \eqref{1.skt}). 
This allowed us to work in a Hilbert space framework,
leading to a novel result through a standard approach. The paper \cite{DHJKN20}
exploited the entropy structure of a general class
of cross-diffusion systems with volume filling, leading to bounded martingale
solutions. The $L^\infty$ bound follows from the entropy structure
and implies higher moment estimates, which were further used
to establish the tightness of laws. Unfortunately, this idea does not work for
system \eqref{1.skt}, since the entropy structure is different and $L^\infty$
bounds cannot be expected.
Therefore, we need to develop new estimates to overcome this issue.

In fact, we prove the existence of martingale solutions to a
SKT-type system involving an arbitrary number of species. We consider
\begin{equation}\label{1.eq}
  \d u_i - \Delta\bigg(a_{i0}u_i + \sum_{j=1}^n a_{ij}u_iu_j\bigg)\d t
	= \sum_{j=1}^n \sigma_{ij}(u)\d W_j(t)\quad\mbox{in }\dom,\ t>0,\ i=1,\ldots,n,
\end{equation}
with the initial and no-flux boundary conditions
\begin{equation}\label{1.bic}
  u_i(0)=u_i^0\quad\mbox{in }\dom,\quad
	\na\bigg(a_{i0}u_i + \sum_{j=1}^n a_{ij}u_i u_j\bigg)\cdot\nu = 0
	\quad\mbox{on }\pa\dom,\ t>0,\ i=1,\ldots,n.
\end{equation}
Here, $a_{ij}\ge 0$ for $i=1,\ldots,n$ and $j=0,\ldots,n$, $u=(u_1,\ldots,u_n)$
is the vector of population densities, 
$(W_1,\ldots,W_n)$ is an $n$-dimensional Wiener process,
$\nu$ is the exterior unit normal vector to $\pa\dom$, and $u_i^0$ is a possibly
random initial datum. We call $a_{i0}$ the diffusion coefficients, $a_{ii}$
the self-diffusion coefficients, and $a_{ij}$ for $i\neq j$ the cross-diffusion
coefficients. 
We say that system \eqref{1.eq}-\eqref{1.bic} is {\em with self-diffusion} if
$a_{i0}\ge 0$, $a_{ii} > 0$ for all $i=1,\ldots,n$, and it is
{\em without self-diffusion} if $a_{i0}>0$, $a_{ii}=0$ for all $i=1,\ldots,n$.

The deterministic analog of \eqref{1.eq} was formally derived from a random-walk
lattice model in \cite{ZaJu17} and rigorously derived from a nonlocal population
system in the triangular case in \cite{Mou20} and from interacting
particle systems in the general case in \cite{CDHJ20}.

Equations \eqref{1.eq} can be written as
$$
  \d u_i - \diver\bigg(\sum_{j=1}^n A_{ij}(u)\na u_j\bigg)\d t
	= \sum_{j=1}^n \sigma_{ij}(u)\d W_j(t)\quad\mbox{in }\dom,\ t>0,\ i=1,\ldots,n,
$$
with the diffusion matrix $A(u)=(A_{ij}(u))$, where
\begin{equation}\label{1.A}
  A_{ij}(u) = \delta_{ij}\bigg(a_{i0} + \sum_{k=1}^na_{ik}u_k\bigg) + a_{ij}u_i,
	\quad i,j=1,\ldots,n.
\end{equation}
As mentioned above, the main difficulty that this matrix is generally neither 
symmetric nor positive semidefinite is overcome by exploiting the entropy structure.
This means that there exists a function $h:[0,\infty)^n\to[0,\infty)$, 
called an entropy density, such that the deterministic
analog of \eqref{1.eq} can be written in terms of the entropy variables
(or chemical potentials) $w_i=\pa h/\pa u_i$ as
\begin{equation}\label{1.w}
  \pa_t u_i(w) - \diver\bigg(\sum_{j=1}^n B_{ij}(w)\na w_j\bigg) = 0, \quad
	i=1\ldots,n,
\end{equation}
where $w=(w_1,\ldots,w_n)$, $u_i$ depends on $w$, and $B(w)=A(u(w))h''(u(w))^{-1}$ 
with $B=(B_{ij})$ turns out to
be positive semidefinite. For the deterministic analog of \eqref{1.eq}, it was 
shown in \cite{CDJ18} that the entropy density is given by
\begin{equation}\label{1.h}
  h(u) = \sum_{i=1}^n\pi_i \big(u_i(\log u_i-1)+1\big),
\end{equation}
where the numbers $\pi_i>0$ are assumed to satisfy 
$\pi_ia_{ij}=\pi_ja_{ji}$ for all $i,j=1,\ldots,n$.
This condition is the detailed-balance condition for the Markov chain associated
to $(a_{ij})$, and $(\pi_1,\ldots,\pi_n)$ is the corresponding reversible stationary
measure. Using $w_i$ in \eqref{1.w} as a test function and summing over $i=1,\ldots,n$,
a formal computation shows that
\begin{equation}\label{1.ei}
  \frac{\d}{\d t}\int_\dom h(u)\d x
	+ 2\int_\dom\sum_{i=1}^n\pi_i\bigg(2a_{i0}|\na\sqrt{u_i}|^2
	+ a_{ii}|\na u_i|^2 + \sum_{j\neq i}a_{ij}|\na\sqrt{u_iu_j}|^2\bigg)\d x = 0.
\end{equation}
A similar expression holds in the stochastic setting; see Lemma \ref{lem.ei}.
It provides gradient estimates for $\sqrt{u_i}$ if $a_{i0}>0$ and for
$u_i$ if $a_{ii}>0$. Moreover, having proved the existence of a solution $w$
to an approximate version of \eqref{1.eq} leads to the positivity
of $u_i(w)=\exp(w_i/\pi_i)$ (and nonnegativity after passing to the 
de-regularization limit).

In the stochastic setting, we face some technical obstacles due to
It\^o's lemma and the treatment of the multiplicative noise.
Our idea, first used in \cite{DHJKN20}, is to replace the Wiener process
by a Wong--Zakai approximation and to discretize the equations by a stochastic
Galerkin method. We apply a variant of the boundedness-by-entropy method \cite{Jue15}
to the resulting system of differential equations, providing the positivity of 
the approximate population densities and a priori estimates uniform 
in the Galerkin dimension.
The limit of vanishing Wong--Zakai parameter requires the existence of solutions
to another Galerkin approximation, leading to strong solutions up to a stopping time.
The tightness of the laws of the approximate solutions follows from
the uniform estimates, and the Skorokhod--Jakubowski theorem implies the 
pointwise convergence of the sequence of approximate solutions.

The difference in the strategy of our proof, compared to \cite{DHJKN20}, becomes
apparent in the final steps. First, we need higher moment estimates, which followed
in \cite{DHJKN20} from the $L^\infty(\dom)$ bound, but here we have less regularity.
Second, we are proving a new fractional time regularity result, needed
to show the tightness of the laws in $L^2(\dom)$. 
A further difference to \cite{DHJKN20} comes from the low regularity of the
gradients when self-diffusion vanishes. Indeed, the regularity for $\na\sqrt{u_i}$
in $L^2$ from \eqref{1.ei} does not allow us to define products like $u_iu_j$. 
The idea is to
exploit the Laplace structure in \eqref{1.eq} and the $L^2$ regularity for
$\na\sqrt{u_iu_j}$ coming from the entropy estimate \eqref{1.ei}.

As a corollary of our main results stated in Section \ref{sec.main}, we obtain the
existence of a global martingale solution to \eqref{1.skt} with the particular
stochastic noise term \eqref{1.sigma}. 

\begin{corollary}[Martingale solutions to the SKT model]
Let $\dom\subset\R^3$ be a bounded domain with smooth boundary
and let either $a_{11}>0$, $a_{22}>0$, $\gamma\le 1$ or 
$a_{10}>0$, $a_{20}>0$, $\gamma\le 2/5$.
Then there exists a global nonnegative martingale solution to 
\eqref{1.skt}, \eqref{1.sigma}, and \eqref{1.bic}.
\end{corollary}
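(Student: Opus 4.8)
The plan is to check that the two-species system \eqref{1.skt} with the noise \eqref{1.sigma} is a special case covered by the existence theorems of Section \ref{sec.main}, the two alternatives in the hypothesis corresponding to the with-self-diffusion theorem ($\gamma\le 1$) and the without-self-diffusion theorem ($\gamma\le 2/5$). First I would note that \eqref{1.skt} is precisely \eqref{1.eq} for $n=2$ with the diagonal noise $\sigma_{ij}(u)=\delta_{ij}\sigma_i(u_i)$, which is an admissible choice of the coefficient matrix $(\sigma_{ij})$. The entropy structure \eqref{1.h} is available: choosing the weights $\pi_1=a_{21}$ and $\pi_2=a_{12}$ (or arbitrary positive $\pi_i$ when $a_{12}=a_{21}=0$) realizes the detailed-balance condition $\pi_1a_{12}=\pi_2a_{21}$, so that the entropy balance \eqref{1.ei} and all estimates built on it in Section \ref{sec.main} apply directly to \eqref{1.skt}.

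The substance of the proof is then to verify that $\sigma_i(u_i)=u_i/(1+u_i^{1-\gamma})$ satisfies the structural assumptions on the noise. I would first record its two regimes: as $u_i\to 0$ one has $\sigma_i(u_i)\sim u_i$ (and $\sigma_i(u_i)=u_i/2$ for $\gamma=1$), while as $u_i\to\infty$ one has $\sigma_i(u_i)\sim u_i^\gamma$. A short computation of $\sigma_i'$ shows that $\sigma_i'(0^+)=1$ and $\sigma_i'(u_i)\to 0$ at infinity, so $\sigma_i$ is globally Lipschitz on $[0,\infty)$ with $\sigma_i(0)=0$; the latter property is what lets the noise degenerate at vanishing density and is compatible with the nonnegativity of the constructed solution.

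The two thresholds on $\gamma$ enter through the growth at infinity. In the with-self-diffusion case, the It\^o correction produced by the multiplicative noise in the entropy balance scales like $\sigma_i(u_i)^2\,\pa^2 h/\pa u_i^2\sim u_i^{2\gamma}/u_i=u_i^{2\gamma-1}$, which is absorbed by the bound coming from \eqref{1.ei} exactly when $2\gamma-1\le 1$, i.e. $\gamma\le 1$. In the without-self-diffusion case the only gradient information is $\na\sqrt{u_i}\in L^2$; since $\dom\subset\R^3$, the Sobolev embedding $H^1(\dom)\hookrightarrow L^6(\dom)$ gives merely $u_i\in L^3(\dom)$, and the higher moment and fractional-time estimates of the corresponding theorem then admit the growth $u_i^\gamma$ only under the more stringent restriction $\gamma\le 2/5$. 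I would confirm this by matching the noise exponent against the space-time integrability furnished by these weaker a priori bounds.

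Once these hypotheses are in place, the existence of a global nonnegative martingale solution is inherited verbatim from the relevant theorem of Section \ref{sec.main}, so no further construction is required. I expect the delicate point to be the bookkeeping behind the $\gamma\le 2/5$ bound in the self-diffusion-free regime, where the reduced regularity and the three-dimensional Sobolev exponents have to be tracked precisely to see that \eqref{1.sigma} still lies in the admissible class; the with-self-diffusion condition $\gamma\le 1$ is, by contrast, transparent from the entropy It\^o correction above.
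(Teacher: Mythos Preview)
Your approach is correct and is exactly what the paper does: the corollary is stated without proof as an immediate consequence of Theorems~\ref{thm.ex} and~\ref{thm.no}, so the content is precisely to check that \eqref{1.skt} with \eqref{1.sigma} fits into the framework (A1)--(A5) and then invoke the appropriate theorem.

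One point to tighten: you should not try to \emph{re-derive} the threshold on $\gamma$ from Sobolev exponents and space--time integrability. The thresholds are already built into Assumption~(A4): $\gamma\le 1$ with self-diffusion and $\gamma\le 2/d$ without. For $d=3$ the latter reads $\gamma\le 2/3$, so the corollary's hypothesis $\gamma\le 2/5$ is simply a stronger condition that certainly implies (A4); your heuristic ``$H^1\hookrightarrow L^6$ gives $u_i\in L^3$, hence $\gamma\le 2/5$'' does not correspond to any estimate in the paper and does not actually produce the number $2/5$. Likewise, in verifying (A5) you check only the It\^o-correction term $\sigma_i(u_i)^2/u_i$; for completeness you should also check the quadratic-variation term $|\sigma_i(u_i)\log u_i|^2$ and the Stratonovich-correction term, since all three appear in (A5). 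The paper asserts after (A5) that the example \eqref{1.sigma} satisfies (A4)--(A5), so in practice you may simply cite this.
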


Deterministic cross-diffusion systems of SKT type with two species 
have been intensively studied in the literature. 
First existence results were proven under restrictive conditions
on the parameters, for instance in one space dimension \cite{Kim84},
for the triangular system with $a_{21}=0$ \cite{LNW98}, or for small
cross-diffusion parameters, since in the latter situation the diffusion matrix
is positive definite \cite{Deu87}. Amann \cite{Ama89} proved that a priori 
estimates in the $W^{1,p}(\dom)$ norm with $p>d$ are sufficient to prove
the global existence of solutions to quasilinear parabolic systems, and
he applied this result to the triangular SKT system. The first global existence
proof without any restriction on the parameters $a_{ij}$ (except nonnegativity)
was achieved in \cite{GGJ03} in one space dimension. This result was generalized
to several space dimensions in \cite{ChJu04,ChJu06} and to the whole space
problem in \cite{Dre08}. SKT-type systems with nonlinear coefficients $A_{ij}(u)$, 
but still for two species, were analyzed in \cite{DLM14,DLMT15}.
Global existence results for SKT-type models with an arbitrary number of species
and under a detailed-balance condition were first proved in \cite{CDJ18} and
later generalized in \cite{LeMo17}. 

There are only very few results for stochastic SKT-type systems. 
The first result for global martingale solutions needed
quadratic diffusion coefficients, since
this allows one to work in a Hilbert space framework \cite{DJZ19}. 
A stronger solution concept was used in \cite{KuNe20},
leading to local-in-time pathwise mild solutions, but only for positive definite
diffusion matrices $A(u)$.

The paper is organized as follows. The stochastic framework and main results
are given in Section \ref{sec.main}. In Section \ref{sec.approx}, the 
existence of approximate solutions is proved and uniform bounds are derived 
from the entropy estimate. The existence of global martingale solutions
is proved in Section \ref{sec.ex} in the case with self-diffusion and 
in Section \ref{sec.no} in the case without self-diffusion.
Estimates for the deterministic SKT system without self-diffusion, which are
needed for the approximate stochastic problem, are derived in Appendix
\ref{app}. As a by-product, we obtain an existence result for the deterministic SKT
system without self-diffusion with a simpler proof and for a more general 
situation compared to \cite{ChJu06}.


\section{Notation and main result}\label{sec.main}

\subsection{Notation and stochastic framework}

Let $\dom\in\R^d$ be a bounded domain. The Lebesgue and
Sobolev spaces are denoted by $L^p(\dom)$ and $W^{k,p}(\dom)$, respectively,
where $p\in[1,\infty]$, $k\in\N$, and we set $H^k(\dom)=W^{2,k}(\dom)$.
We write $\|u\|_{L^2(\dom)}^2=\sum_{i=1}^n\|u_i\|_{L^2(\dom;\R^n)}^2$ for functions
$u=(u_1,\ldots,u_n)\in L^2(\dom)$ and use this notation in related situations.
We write $\langle\cdot,\cdot\rangle$ for the dual product between a Banach space and
its dual. If $u\in L^p(\dom)$, $v\in L^q(\dom)$ with $1/p+1/q=1$ and $p>1$, 
we have $\langle u,v\rangle=\int_\dom uv\d x$.

Let $(\Omega,\F,\Prob)$ be a probability space endowed with a complete
right-continuous filtration $\mathbb{F}=(\F_t)_{t\ge 0}$. The space
$L^2(\Omega;H)$ for a Hilbert space $H$ consists of all $H$-valued random
variables $u$ such that $\E\|u\|_H^2=\int_\Omega\|u(\omega)\|_H^2\Prob(\d\omega)
<\infty$. Let $(\eta_k)_{k=1,\ldots,n}$ be the canonical basis of $\R^n$.
The space of Hilbert--Schmidt operators from $\R^n$ to $L^2(\dom)$ is defined by
$$
  \L_2(\R^n;L^2(\dom)) = \bigg\{L:\R^n\to L^2(\dom)\mbox{ linear continuous}:
	\sum_{k=1}^n\|L\eta_k\|_{L^2(\dom)}^2<\infty\bigg\}
$$
and endowed with
the norm $\|L\|_{\L_2(\R^n;L^2(\dom))}^2=\sum_{k=1}^n\|L\eta_k\|_{L^2(\dom)}^2$.
The stochastic diffusion $\sigma=(\sigma_{ij}):
\R^n\to\R^{n\times n}$ is assumed to be
${\mathcal B}(L^2(\dom;\R^n))/\mathcal{B}(\mathcal{L}_2(\R^n;L^2(\dom)))$-measurable 
and $\mathbb{F}$-adapted.

\subsection{Assumptions and main result}

We impose the following assumptions:

\begin{labeling}{(A44)}
\item[(A1)] Domain: $\dom\subset\R^d$ ($d\ge 1$) is a bounded domain with
$\pa\dom\in C^{\ell}$ and $\ell\in\N$ satisfies $\ell>d/2+2$. Let $T>0$ and
set $Q_T=\dom\times(0,T)$.

\item[(A2)] Initial datum: $u^0\in L^{r}(\Omega;L^2(\dom;\R^n))$ 
for $r>(2/d)\max\{8,d+2\}$ is $\F_0$-measurable and 
$u_i^0\ge 0$ for a.e.\ $x\in\dom$ $\Prob$-a.s., $i=1,\ldots,n$.

\item[(A3)] Diffusion matrix: $a_{ij}\ge 0$ for $i=1,\ldots,n$, $j=0,\ldots,n$
and the detailed-balance condition is satisfied, 
i.e., there exist numbers $\pi_1,\ldots,\pi_n>0$ such that
\begin{equation}\label{2.dbc}
  \pi_i a_{ij} = \pi_j a_{ji} \quad\mbox{for all }i,j=1,\ldots,n.
\end{equation}

\item[(A4)] Multiplicative noise: $\sigma:L^2(\dom;\R^n)\to\L_2(\R^n;L^2(\dom;\R^n))$ 
satisfies $\sigma_{ij}(u)=0$ for all $u\in[0,\infty)$ with $u_i=0$,
and there is a constant $C_\sigma>0$ such that for any $u$, $v\in L^2(\dom)$,
\begin{align*}
  \|\sigma(u)-\sigma(v)\|_{\L_2(\R^n;L^2(\dom))}
	&\le C_\sigma\|u-v\|_{L^2(\dom)}, \\
	\|\sigma(u)\|_{\L_2(\R^n;L^2(\dom))} 
	&\le C_\sigma\big(1+\|u\|_{L^2(\dom)}^\gamma\big),
\end{align*}
where $0<\gamma\le 1$ (with self-diffusion) or
$0<\gamma\le 2/d$ (without self-diffusion). 

\item[(A5)] Interaction of entropy density and noise: There exists $C_h>0$ such that
for all $u\in(0,\infty)^n$,
\begin{align*}
  \max_{j=1,\ldots,n}\bigg|\sum_{i=1}^n\sigma_{ij}(u) \log u_i\bigg|^2
	&+ \bigg|\sum_{i,j,k=1}^n\frac{\pa\sigma_{ij}}{\pa u_k}(u)
	\sigma_{jk}(u)\log u_i\bigg| \\
	&{}+ \bigg|\sum_{i,j=1}^n\frac{\sigma_{ij}(u)^2}{u_j}\bigg| 
	\le C_h\bigg(1 + \sum_{i=1}^n u_i(\log u_i-1)\bigg).
\end{align*}
Furthermore, $u\mapsto \sum_{i,j,k=1}^n(\pa\sigma_{ij}/\pa u_k(u)\sigma_{jk}(u)$
is assumed to be Lipschitz continuous.
\end{labeling}

Let us discuss these assumptions.
The boundary regularity in Assumption (A1) is used to define a Galerkin space
embedded into $W^{2,\infty}(\dom)$, thus avoiding issues with the regularity
of the diffusion coefficients.
Assumption (A2) on the initial datum can be relaxed, since we only need
the integrability of $u_i^0\log u_i^0$. The detailed-balance condition in
Assumption (A3) is needed to derive the entropy inequality, which provides a priori
estimates (see Lemma \ref{lem.ei}). We may replace this assumption by
$$
  a_{ii} > \frac14\sum_{j=1}^n\big(\sqrt{a_{ij}}-\sqrt{a_{ji}}\big)^2,\quad
	a_{ii}>0\quad
	\mbox{for }i=1,\ldots,n,
$$
which expresses that self-diffusion dominates cross-diffusion \cite[Lemma 6]{CDJ18}.
The Lipschitz continuity of the stochastic diffusion term in Assumption (A4)
in the case with self-diffusion is a standard condition for stochastic PDEs; 
see, e.g., \cite{PrRo07}. Without self-diffusion, we have less regularity and
therefore we need a sublinear condition for $\sigma$.
The condition that $\sigma_{ij}$ vanishes at $u_i=0$ 
ensures the nonnegativity of the solution. 
Assumption (A5) allows us to compensate the singularity at zero for 
$\pa h/\pa u_i=\pi_i\log u_i$ when we derive the entropy estimate.
For instance, the stochastic diffusion term
$$
  \sigma_{ij}(u) = \frac{u_i\delta_{ij}}{1+u_i^{1-\gamma}}
	\quad \mbox{for all }u\in[0,\infty)^n,\ i=1,\ldots,n,
$$
satisfies Assumptions (A4)--(A5). 
Only finite-dimensional Wiener processes instead of infinite-dimensional ones 
are considered, because the specific structure of the interaction between the entropy 
density and stochastic diffusion in Assumption (A5) becomes clearer.

\begin{theorem}[Existence, with self-diffusion]\label{thm.ex}
Let Assumptions (A1)--(A5) hold, $T>0$, and let $a_{ii}>0$ for all $i=1,\ldots,n$. 
Then there exists a global martingale
solution to \eqref{1.eq}--\eqref{1.bic} satisfying 
$\widetilde u_i(x,t)\ge 0$ a.e.\ in $Q_T$ $\widetilde\Prob$-a.s.,
$i=1,\ldots,n$. More precisely, there exists a triple 
$(\widetilde U,\widetilde W,\widetilde u)$ such that 
$\widetilde U=(\widetilde\Omega,\widetilde{\mathcal{F}},\widetilde{\Prob},
\widetilde{\mathbb{F}})$ is a stochastic basis with filtration 
$\widetilde{\mathbb{F}}=(\widetilde{\mathcal{F}}_t)_{t\in[0,T]}$, $\widetilde W$
is an $\R^n$-valued Wiener process {on this filtered probability space}, 
and $\widetilde u(t)=(\widetilde u_1(t),\ldots,\widetilde u_n(t))$ 
is a progressively measurable
stochastic process for all $t\in[0,T]$ such that for $i=1,\ldots,n$,
$$
  \widetilde u_i\in L^2(\widetilde\Omega;C^0([0,T];H^{m}(\dom)'))\cap
	L^2(\widetilde\Omega;L^2(0,T;H^1(\dom))),
$$
where $m>d/2+1$, the law of $\widetilde u_i(0)$ is the same as for $u_i^0$,
and $\widetilde u$ satisfies for all $\phi_i\in H^m(\dom)$ and $i=1,\ldots,n$,
\begin{align}
  \langle\widetilde u_i(t),\phi_i \rangle
	&= \langle\widetilde u_i(0),\phi_i\rangle
	+ \int_0^t\int_\dom\na\bigg(a_{i0}\widetilde u_i(s) 
	+ \sum_{j=1}^n a_{ij}\widetilde u_i(s)
	\widetilde u_j(s)\bigg)\cdot\na\phi_i\d x\d s \nonumber \\
	&\phantom{xx}+ \sum_{j=1}^n\int_\dom\bigg(\int_0^t\sigma_{ij}(\widetilde u(s))
	\d\widetilde W_j(s)\bigg)\phi_i\d x, \label{1.weak}
\end{align}
where $\langle\cdot,\cdot\rangle$ is the dual product between $H^m(\dom)'$ and
$H^m(\dom)$.
\end{theorem}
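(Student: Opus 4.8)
The plan is to obtain the martingale solution as a double limit of approximate solutions, regularizing the noise by a Wong--Zakai approximation with parameter $\eta$ and the space by a Galerkin scheme of dimension $N$, while transporting all a priori information through the entropy functional \eqref{1.h}. I would first pass to the entropy variables $w_i=\pa h/\pa u_i=\pi_i\log u_i$, so that $u_i(w)=\exp(w_i/\pi_i)>0$ is automatically positive (the boundedness-by-entropy mechanism). Choosing a Galerkin basis in a finite-dimensional space $H_N\subset W^{2,\infty}(\dom)$---which exists by the boundary regularity in (A1)---and replacing each $W_j$ by a piecewise-linear-in-time interpolation $W_j^\eta$ of bounded variation, the Galerkin system becomes a pathwise random ODE for the coefficients of $w^{N,\eta}$, solvable locally by Cauchy--Lipschitz; a variant of the boundedness-by-entropy argument then yields global existence and positivity of $u^{N,\eta}=u(w^{N,\eta})$.

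Next I would derive the stochastic entropy estimate of Lemma \ref{lem.ei}. Applying It\^o's formula to $\int_\dom h(u^{N,\eta})\,\d x$ and using the detailed-balance condition (A3) reproduces the dissipation in \eqref{1.ei}, while the It\^o corrections generated by the multiplicative noise are controlled precisely by Assumption (A5), which dominates the singular factor $\log u_i$. Since $a_{ii}>0$, this dissipation bounds $\na u_i^{N,\eta}$ in $L^2$, so I obtain estimates uniform in $N$ and $\eta$ for $\E\sup_t\int_\dom h(u^{N,\eta})\,\d x$ and $\E\int_0^T\|\na u_i^{N,\eta}\|_{L^2(\dom)}^2\,\d t$. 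As no $L^\infty(\dom)$ bound is available here, I would upgrade these to higher moment estimates by applying It\^o to a suitable power of $\|u^{N,\eta}\|_{L^2(\dom)}^2$ and combining the sublinear growth in (A4) with the Burkholder--Davis--Gundy inequality and a Gronwall argument.

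To recover compactness in time I would prove a fractional time-regularity bound for $\E\|u_i^{N,\eta}\|_{W^{\alpha,p}(0,T;H^m(\dom)')}$ with some $\alpha\in(0,1/2)$: the deterministic drift contributes a Lipschitz-in-time term from the $H^1$ bound, while the stochastic part is estimated by the standard fractional Sobolev bound for It\^o integrals in terms of the already-controlled moments of $\sigma(u^{N,\eta})$. The limit $\eta\to 0$, which passes from the Stratonovich-type Wong--Zakai integral back to the It\^o integral, requires an auxiliary Galerkin approximation and yields strong solutions up to a stopping time that still obey the uniform estimates; the higher moment bounds are what allow the stopping time to exhaust $[0,T]$. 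Combining the spatial $H^1$ bound with the fractional time regularity through a fractional Aubin--Lions--Simon argument renders the laws of the resulting approximate solutions tight on $L^2(0,T;L^2(\dom))$, and the Skorokhod--Jakubowski theorem provides a new stochastic basis carrying a.s.-convergent copies $\widetilde u^{N}\to\widetilde u$ together with Wiener processes $\widetilde W^{N}$.

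Finally I would identify $\widetilde u$ as a solution of \eqref{1.weak}. Strong $L^2(Q_T)$ convergence allows passage to the limit in the quadratic terms $u_i u_j$; the Lipschitz estimate in (A4) gives $\sigma(\widetilde u^{N})\to\sigma(\widetilde u)$ in the Hilbert--Schmidt norm; and a martingale-characterization argument---verifying that the limit process is a martingale with the correct quadratic variation---identifies the stochastic integral. I expect the main obstacle to be the handling of the stochastic integral across the double limit: reconciling the Wong--Zakai correction as $\eta\to 0$ and then carrying the It\^o integral through the Skorokhod limit $N\to\infty$ in the absence of an $L^\infty$ bound, which is exactly where the new higher-moment and fractional-time estimates become indispensable.
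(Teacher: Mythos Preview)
Your overall architecture matches the paper's: approximation by Galerkin plus Wong--Zakai, entropy inequality, higher-order moments, fractional time regularity in $H^m(\dom)'$, tightness via a fractional Aubin--Lions--Simon argument, Skorokhod--Jakubowski, and identification of the limit. Two organizational differences are harmless: the paper constructs the stochastic Galerkin solution $u^{(N)}$ \emph{first} by a Banach fixed-point argument and uses the Wong--Zakai layer $u^{(N,\eta)}$ only to transfer nonnegativity to $u^{(N)}$ (the entropy estimates are then derived directly on $u^{(N)}$ via It\^o, not on $u^{(N,\eta)}$); and it identifies the limit by passing in each term of the weak formulation rather than by a martingale--quadratic-variation characterization, though either route is fine.

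There is, however, a genuine gap in your higher-moment step. You propose to obtain $\E\|u\|_{L^2(\dom)}^p$-type bounds by applying It\^o to a power of $\|u\|_{L^2(\dom)}^2$. This fails for exactly the reason the whole paper is built on entropy rather than energy: the diffusion matrix $A(u)$ in \eqref{1.A} is \emph{not} positive semidefinite, so the drift contribution $-\langle\nabla u,A(u)\nabla u\rangle$ carries no sign and cannot be closed by Gronwall. What the paper does instead (Lemma~\ref{lem.mom}) is return to the \emph{pathwise} entropy inequality \eqref{2.aux3}---the one obtained before taking the expectation---raise \emph{that} inequality to the $p$th power, take expectation, and apply Burkholder--Davis--Gundy to the resulting stochastic integral, with Assumption~(A5) again controlling the $\sigma_{ij}(u)\log u_i$ and $\sigma_{ij}(u)^2/u_i$ terms so that Gronwall closes. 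This yields $\E\|u^{(N)}\|_{L^\infty(0,T;L^1(\dom))}^p$ and $\E\|\nabla u^{(N)}\|_{L^2(Q_T)}^p$; the $L^p(\Omega;L^{2+4/d}(0,T;L^2(\dom)))$ control you need for the It\^o isometry in the time-regularity lemma then follows by Gagliardo--Nirenberg (see \eqref{2.L3}). A secondary point your outline skips: It\^o's formula cannot be applied to $\int_\dom h(u^{(N)})\,\d x$ directly because $h$ is not $C^2$ on $[0,\infty)^n$; the paper introduces a smooth regularization $h_\delta$ (and a cutoff $h_\delta^+$ to handle the possibly sign-changing projected initial datum $\Pi_N u^0$) and passes $\delta\to 0$ via dominated and stochastic dominated convergence.
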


\begin{theorem}[Existence, without self-diffusion]\label{thm.no}
Let Assumptions (A1)--(A5) hold, $T>0$, $d\le 3$, 
and let $a_{i0}>0$, $a_{ii}=0$ for all $i=1,\ldots,n$. 
Then there exists a global martingale
solution to \eqref{1.eq}--\eqref{1.bic}, satisfying 
$\widetilde u_i(x,t)\ge 0$ a.e.\ in $Q_T$ $\widetilde\Prob$-a.s.,
$i=1,\ldots,n$. More precisely, there exists a triple
$(\widetilde U,\widetilde W,\widetilde u)$, where $\widetilde U$ and
$\widetilde W$ are as in Theorem \ref{thm.ex}, 
$\widetilde u(t)=(\widetilde u_1(t),\ldots,\widetilde u_n(t))$ 
is a progressively measurable
stochastic process for all $t\in[0,T]$ such that for $i=1,\ldots,n$, $j\neq i$,
\begin{align*}
  & \widetilde u_i\in L^2(\widetilde\Omega;C^0([0,T];H^{3}(\dom)'))\cap
	L^2(\widetilde\Omega;L^{8/7}(0,T;W^{1,{8/7}}(\dom))), \\
	& \widetilde u_i\widetilde u_j\in L^2(\widetilde\Omega;
	L^{8/7}(0,T;W^{1,{8/7}}(\dom))),
\end{align*}
the law of $\widetilde u_i(0)$ is the same as for $u_i^0$, and
$\widetilde u$ satisfies the weak formulation \eqref{1.weak} for all
$\phi_i\in H^3(\dom)$.
\end{theorem}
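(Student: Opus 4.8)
The plan is to obtain Theorem \ref{thm.no} as a limit of the same approximate solutions constructed in Section \ref{sec.approx} (stochastic Galerkin discretization together with a Wong--Zakai regularization of the Wiener process), reusing as much of the machinery of Theorem \ref{thm.ex} as possible, and replacing every step that relied on an $L^2(0,T;H^1(\dom))$ bound for $u_i$ by one that uses only the square-root gradient bounds available here. Since $a_{ii}=0$, the quasilinear Galerkin system has a degenerate diffusion matrix, so its solvability up to the Wong--Zakai limit cannot be read off from standard parabolic theory; here I would close the continuation argument that yields strong solutions up to a stopping time by invoking the deterministic estimates for the SKT system without self-diffusion collected in Appendix \ref{app}. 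Specializing the stochastic entropy estimate of Lemma \ref{lem.ei} to $a_{i0}>0$, $a_{ii}=0$ then controls, uniformly in the approximation parameters, $\na\sqrt{u_i}$ and $\na\sqrt{u_iu_j}$ in $L^2(\widetilde\Omega;L^2(0,T;L^2(\dom)))$, but gives no direct bound on $\na u_i$.

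The first genuinely new step is to convert these square-root bounds into the spaces appearing in the statement. The identity $\na(u_iu_j)=2\sqrt{u_iu_j}\,\na\sqrt{u_iu_j}$ (and likewise $\na u_i=2\sqrt{u_i}\,\na\sqrt{u_i}$) is the right tool. Complementing the entropy dissipation with the higher moment estimate (obtained as for Theorem \ref{thm.ex}, by a weighted It\^o estimate on $\sum_i\pi_i\|u_i\|_{L^2(\dom)}^2$ that exploits the detailed-balance structure (A3), the nonnegativity of the densities, and the growth bound (A4)), which provides a uniform bound of $u_i$ in $L^\infty(0,T;L^2(\dom))$ and hence of $\sqrt{u_iu_j}$ in $L^\infty(0,T;L^2(\dom))$, I would interpolate with $\sqrt{u_iu_j}\in L^2(0,T;H^1(\dom))\hookrightarrow L^2(0,T;L^6(\dom))$ (here $d\le 3$ enters) to place $\sqrt{u_iu_j}$ in $L^{8/3}(0,T;L^4(\dom))\hookrightarrow L^{8/3}(Q_T)$. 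A space--time H\"older estimate then gives the uniform bound
\[
  \|\na(u_iu_j)\|_{L^{8/7}(Q_T)}\le 2\|\sqrt{u_iu_j}\|_{L^{8/3}(Q_T)}
	\|\na\sqrt{u_iu_j}\|_{L^2(Q_T)},
\]
and the same computation yields $u_i\in L^{8/7}(0,T;W^{1,8/7}(\dom))$, which are precisely the spaces in the statement. The Laplace structure of \eqref{1.eq} enters only through the fact that the drift equals $\Delta(a_{i0}u_i+\sum_j a_{ij}u_iu_j)$, so a single gradient suffices in the weak formulation \eqref{1.weak} and the $L^{8/7}$ bound can be paired with $\na\phi_i$ for $\phi_i\in H^3(\dom)$ (note $H^3(\dom)\hookrightarrow W^{1,\infty}(\dom)$ in three dimensions, which is why the test space is $H^3$).

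Next I would establish the fractional-in-time regularity, as in Theorem \ref{thm.ex}: testing \eqref{1.weak} against $H^3(\dom)$, the drift lies in $W^{1,8/7}(0,T;H^3(\dom)')$ by the spatial gradient bound above, while the martingale part lies in $L^2(\widetilde\Omega;C^\alpha([0,T];L^2(\dom)))$ for $\alpha<1/2$ by the Burkholder--Davis--Gundy inequality and the sublinear growth (A4); together they give a uniform bound of $u_i$ in $L^2(\widetilde\Omega;W^{\beta,p}(0,T;H^3(\dom)'))$ for suitable $\beta,p$. Here the low regularity bites: $W^{1,8/7}(\dom)$ does not embed into $L^2(\dom)$ for $d=3$ (its Sobolev exponent is $24/13<2$), so an Aubin--Lions--Simon argument yields tightness of the laws of $u_i$ only in $L^{8/7}(Q_T)$, not in $L^2(Q_T)$. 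I would recover genuine $L^2$ convergence on the new probability space by interpolating this $L^{8/7}(Q_T)$ convergence against the uniform bound in $L^s(Q_T)$ for some $s>2$ (coming from $u_i\in L^\infty(0,T;L^2(\dom))\cap L^1(0,T;L^3(\dom))$, the latter via $\sqrt{u_i}\in L^2(0,T;L^6(\dom))$). With tightness secured, the Skorokhod--Jakubowski theorem furnishes a new stochastic basis on which a subsequence of the approximations converges $\widetilde\Prob$-almost surely in $L^2(Q_T)$ to a limit $\widetilde u$.

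The principal difficulty, and the last step, is the identification of the nonlinear limits. The almost sure strong $L^2(Q_T)$ convergence of $\widetilde u_i$, together with the uniform higher-integrability bound, gives by interpolation strong convergence of the products $\widetilde u_i\widetilde u_j$ in $L^1(Q_T)$; combined with the uniform $L^{8/7}(Q_T)$ bound on $\na(u_iu_j)$ this identifies the weak limit of the approximate gradients as $\na(\widetilde u_i\widetilde u_j)$. This is exactly the point where the entropy control of $\na\sqrt{u_iu_j}$ --- rather than any bound on $\na u_i$ --- is indispensable, since no estimate on $\na u_i$ alone would let us make sense of, or pass to the limit in, the product terms. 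The stochastic integral is handled by the standard martingale-identification argument using the Lipschitz and sublinear bounds (A4) and the a.s.\ $L^2$ convergence of $\widetilde u$. Finally, nonnegativity of $\widetilde u_i$ is inherited from the positivity of the approximate densities (a consequence of the entropy variables $w_i=\pi_i\log u_i$ and the boundedness-by-entropy method), while the condition $\sigma_{ij}(u)=0$ at $u_i=0$ keeps the limit in the admissible range. Passing to the limit in each term of \eqref{1.weak} then completes the proof.
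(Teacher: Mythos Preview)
Your plan has one genuine gap that breaks the argument. You claim a uniform bound for $u_i^{(N)}$ in $L^\infty(0,T;L^2(\dom))$ via ``a weighted It\^o estimate on $\sum_i\pi_i\|u_i\|_{L^2(\dom)}^2$ that exploits the detailed-balance structure''. This estimate is not available when $a_{ii}=0$. Testing the $i$th equation with $\pi_i u_i$ and summing produces, besides the good terms $-\pi_i a_{i0}\|\na u_i\|_{L^2}^2$ and $-\pi_i a_{ij}\int u_j|\na u_i|^2$, the cubic cross terms $-\sum_{i\neq j}\pi_i a_{ij}\int u_i\na u_i\cdot\na u_j$. Detailed balance symmetrizes these to $-\tfrac12\sum_{i\neq j}\pi_i a_{ij}\int(u_i+u_j)\na u_i\cdot\na u_j$, which has no sign; any attempt to absorb them by Young's inequality leaves over precisely the self-diffusion contributions $\int u_i|\na u_i|^2$, which you do not have since $a_{ii}=0$. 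In short, the $L^2$ energy does not close for the SKT system without self-diffusion, and the paper never uses (or proves) such a bound---the higher-moment lemma it invokes gives only $\E\|u^{(N)}\|_{L^\infty(0,T;L^1(\dom))}^p\le C$.

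This error propagates. Your interpolation giving $\sqrt{u_iu_j}\in L^{8/3}(Q_T)$ can be repaired by using $L^\infty(0,T;L^1(\dom))\cap L^2(0,T;H^1(\dom))$ and Gagliardo--Nirenberg (this is exactly how the paper obtains it), so the $L^{8/7}(0,T;W^{1,8/7}(\dom))$ bounds survive. But your strategy for strong compactness---tightness in $L^{8/7}(Q_T)$ upgraded to $L^2(Q_T)$ by interpolation against an $L^s(Q_T)$ bound with $s>2$---collapses: for $d=3$ the best available is $u_i\in L^{5/3}(Q_T)$ (from $\sqrt{u_i}\in L^{2+4/d}(Q_T)$), strictly below $2$. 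The paper circumvents this by proving tightness directly in the weaker spaces $L^{5/4}(0,T;L^2(\dom))$ and $L^2(0,T;L^{3/2}(\dom))$, using the compact embeddings of $W^{\alpha,2}(0,T;H^3(\dom)')\cap L^{5/4}(0,T;W^{1,5/4}(\dom))$ and $W^{\alpha,2}(0,T;H^3(\dom)')\cap L^2(0,T;W^{1,1}(\dom))$ respectively; a.e.\ convergence of the products $\widetilde u_i^{(N)}\widetilde u_j^{(N)}$ together with the uniform $L^{8/7}(W^{1,8/7})$ bound is then enough to identify $\na(\widetilde u_i\widetilde u_j)$ by pairing against $\Delta\phi$ for $\phi\in W^{2,\infty}(\dom)$ and using Vitali. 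No $L^2(Q_T)$ convergence of $\widetilde u^{(N)}$ is needed or obtained.
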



\section{Approximate scheme and entropy estimates}\label{sec.approx}

We prove Theorems \ref{thm.ex} and \ref{thm.no} by approximating system 
\eqref{1.eq} by a stochastic Galerkin method, using the Wong--Zakai approximation 
of the Wiener process, and deriving some entropy estimates.

\subsection{Stochastic Galerkin approximation}

The existence of a strong (in the probability sense) solution to the Galerkin
approximation up to a stopping time is proved by using the Banach fixed-point
theorem. For this, we project \eqref{1.eq} onto the finite-dimensional
Hilbert space $H_N=\operatorname{span}\{e_1,\ldots,e_N\}$, where $N\in\N$ and
$(e_j)_{j\in\N}$ is an orthonormal basis of $L^2(\dom)$ such that 
$H_N\subset W^{2,\infty}(\dom)$. 
For instance, $(e_j)$ may consist of the eigenfunctions
of $-\Delta$ on $\dom$ with homogeneous Neumann boundary conditions. At this point,
we need the regularity of $\pa\dom$ to ensure that $e_j\in H^\ell(\dom)\hookrightarrow
W^{2,\infty}(\dom)$ (which requires that $\ell>d/2+2$).
Furthermore, let $\Pi_N:L^2(\dom)\to H_N$, $\Pi_N(v)=\sum_{k=1}^N
\langle v,e_k\rangle e_k$ for $v\in L^2(\dom)$ be the projection onto $H_N$.

The approximate problem is the following system of stochastic differential equations,
\begin{equation}\label{2.sde}
  \d u_i^{(N)} = \Pi_N\Delta\bigg(a_{i0}u_i^{(N)} + \sum_{j=1}^n 
	a_{ij}u_i^{(N)}u_j^{(N)}\bigg)\d t 
 	+ \sum_{j=1}^n\Pi_N\sigma_{ij}(u^{(N)})\d W_j(t)
\end{equation}
for $i=1,\ldots,n$, 
with the initial conditions
\begin{equation}\label{2.ic}
  u_i^{(N)}(0) = \Pi_N(u_i^0), \quad i=1,\ldots,n.
\end{equation}

Given $T>0$, we introduce the space $X_T=L^2(\Omega;C^0([0,T];H_N))$ with the norm
$\|u\|_{X_T}^2=\E(\sup_{0<t<T}\|u(t)\|_{H_N})^2$. For given $R>0$ and
$u\in X_T$, we define the exit time 
$\tau_R:=\inf\{t\in[0,T]:\|u(t)\|_{L^2(\dom)}>R\}$.
Then $\{\omega \in \Omega : \tau_R(\omega) > t \}$ belongs to $\mathcal{F}_t$ 
for every $t \in [0,T]$ and $\tau_R$ is an $\mathbb{F}$-stopping time.
We define the fixed-point operator $S:X_T\to X_T$ by
\begin{align*}
  \langle S_N(u)(t),\phi\rangle
	&= \langle u^0,\phi\rangle - \int_0^t\bigg\langle 
	\bigg(a_{i0}u_i+\sum_{j=1}^n a_{ij}u_iu_j\bigg),\Delta\phi_i\bigg\rangle\d s \\
  &\phantom{xx}{}+ \int_0^t\sum_{j=1}^n\langle\sigma_{ij}(u)\d W_j(s),\phi_i\rangle,
\end{align*}
for $u\in X_T$ and $\phi_i\in H_N$
satisfying $\na\phi_i\cdot\nu=0$ on $\pa\dom$.
Note that $\langle u^0,\phi\rangle = \sum_{i=1}^n\langle\Pi_N(u^0_i),\phi_i\rangle$ 
since $\phi\in H_N^n$.

We claim that $S:X_T\to X_T$ is a self-mapping and a contraction. 
The proof is similar to that one for \cite[Prop.~4]{DHJKN20}. 
The main difference is that the
definition of the stopping time is based here on the $L^2(\dom)$ norm, while
the $H^1(\dom)$ norm was used in \cite{DHJKN20}. To compensate the weaker norm,
we exploit the Laplace structure of \eqref{1.eq}. Indeed, for the self-mapping
property, we need to verify that $\|S(u)\|_{X_{T\wedge\tau_R}}\le
C(\|u\|_{X_{T\wedge\tau_R}})$. Since only the (normally) elliptic term is different,
it is sufficient to estimate
\begin{align*}
  \E\bigg(&\sup_{0<t<T\wedge\tau_R}\bigg|\int_0^{t}\bigg\langle 
	\bigg(a_{i0}u_i(s)+\sum_{j=1}^n a_{ij}u_i(s)u_j(s)\bigg),\Delta\phi_i
	\bigg\rangle\d s\bigg|\bigg)^2 \\
  &\le T^2(1+C(R))\|\Delta\phi_i\|_{L^\infty(\dom)}^2 
	\E\|u(t)\|_{L^\infty(0,T;L^2(\dom))}^2 \\
	&\le C(T,R)\|\phi\|_{H_N}^2\E\|u(t)\|_{L^\infty(0,T;L^2(\dom))}^2,
\end{align*}
and the other terms are essentially
treated as in the proof of \cite[Prop.~4]{DHJKN20}.
For the contraction property, we need to estimate the difference
$S(u)-S(v)$ for $u$, $v\in X_{T\wedge\tau_R}$. It is sufficient to consider the term
\begin{align*} 
  \E\bigg(&\sup_{0<t<T\wedge\tau_R}\bigg|\int_0^t\bigg\langle
	a_{i0}(u_i-v_i)(s)+\sum_{j=1}^n a_{ij}\big((u_i-v_i)u_j+v_i(u_j-v_j)\big)(s)\bigg),
	\Delta\phi_i\bigg\rangle\d s\bigg|\bigg)^2 \\
  &\le C(R)T\|\Delta\phi_i\|_{L^\infty(\dom)}^2\E\int_0^{T\wedge\tau_R}
	\|u(s)-v(s)\|_{L^2(\dom)}^2\d s \\
	&\le C(N,R)T^2\|\phi\|_{H_N}^2\|u-v\|_{X_{T\wedge\tau_R}}^2.
\end{align*}
The remaining terms are estimated as in the proof of \cite[Prop.~4]{DHJKN20}.
This leads to
$$
  \|S(u)-S(v)\|_{X_{T\wedge\tau_R}} \le C(N,R)T\|u-v\|_{X_{T\wedge\tau_R}},
$$
showing that $S:X_{T^*}\to X_{T^*}$ is a contraction for $0<T^*<T\wedge\tau_R$
satisfying $C(N,R)T^*<1$. This shows that \eqref{2.sde}--\eqref{2.ic} possesses
a unique solution $u^{(N)}$ up to the stopping time $\tau_R$.

\subsection{Wong--Zakai approximation}\label{sec.wz}

We prove the existence of global-in-time solutions to another approximate
system of \eqref{1.eq} by replacing the Wiener process by the Wong--Zakai
approximation, leading to a system of ordinary differential equations.
This step is necessary to obtain the nonnegativity of the solutions $u_i^{(N)}$
constructed in the previous subsection.

We project \eqref{1.eq} as in the previous subsection onto the Galerkin space $H_N$
and introduce a uniform partition of the time interval $[0,T]$ with time step
$\eta=T/M$, where $M\in\N$. We set $t_k=k\eta$ for $k=0,\ldots,M$. The Wiener
process is approximated by the process \cite{WoZa65}
$$
  W^{(\eta)}_j(t) = W_j(t_k) + \frac{t-t_k}{\eta}\big(W_j(t_{k+1})-W_j(t_k)\big),
	\quad t\in[t_k,t_{k+1}],\ k=0,\ldots,M.
$$
The approximate system is given by
\begin{align}
  & \frac{\d u^{(N,\eta)}}{\d t} = \Pi_N\diver\bigg(\sum_{j=1}^n A_{ij}(u^{(N,\eta)})
	\na u_j^{(N,\eta)}\bigg) + f_i(u^{(N,\eta)},t), \quad\mbox{where} \label{2.ode} \\
	& f(u^{(N,\eta)},t) = \sum_{j=1}^n\Pi_N\big(\sigma_{ij}(u^{(N,\eta)})\big)
	\frac{\d W_j^{(\eta)}}{\d t}(t) - \frac12\Pi_N\sum_{j,k=1}^n
	\frac{\pa\sigma_{ij}}{\pa u_k}(u^{(N,\eta)})\sigma_{kj}(u^{(N,\eta)}) \nonumber
\end{align}
with the initial condition $u^{(N,\eta)}(0)=\Pi_N(u_i^0)$.
System \eqref{2.ode} can be written in the weak form
\begin{align}
  \langle u_i^{(N,\eta)}(t),\phi_i\rangle 
	&= \langle u_i^0,\phi_i\rangle - \int_0^t\sum_{j=1}^n\big\langle 
	A_{ij}(u^{(N,\eta)}(s))
	\na u_j^{(N,\eta)}(s),\na\phi_i\big\rangle \d s \nonumber \\
	&\phantom{xx}{}+ \int_0^t\sum_{j=1}^n\bigg\langle\sigma_{ij}(u^{(N,\eta)}(s))
	\frac{\d W_j^{(\eta)}}{\d t}(s),\phi_i\bigg\rangle \d s \nonumber \\
	&\phantom{xx}{}- \frac12 \int_0^t \bigg\langle \sum_{j,k=1}^n
	\frac{\pa\sigma_{ij}}{\pa u_k}(u^{(N,\eta)}(s))\sigma_{kj}(u^{(N,\eta)}(s)), 
	\phi_i\bigg\rangle \d s \label{WZ.weak}
\end{align}
for any $\phi_i\in H_N$. 
The last term is needed, since the Wong--Zakai approximation
converges to the Stratonovich noise that is related to the It\^o noise by
$$
  \sum_{j=1}^n\sigma_{ij}(u)\circ\d W_j(t) 
	= \sum_{j=1}^n\sigma_{ij}(u)\d W_j(t) + \frac12\sum_{j,k=1}^n
	\frac{\pa\sigma_{ij}}{\pa u_k}(u)\sigma_{kj}(u)\d t.
$$

We need to distinguish the cases with and without self-diffusion.
First, if $a_{ii}>0$ for $i=1,\ldots,n$, 
it follows from the techniques of \cite{CDJ18} (see \cite[Prop.~5]{DHJKN20} for
details) that for a.e.\ $\omega\in\Omega$,
there exists a global-in-time weak solution $u^{(N,\eta)}$ to \eqref{WZ.weak}
satisfying $u^{(N,\eta)}_i(\omega,\cdot,\cdot)\ge 0$ a.e.\ in $\Omega\times(0,T)$,
\begin{align*}
  & u_i^{(N,\eta)}(\omega,\cdot,\cdot)\in L^2(0,T;H^1(\dom)) 
	\cap L^\infty(0,T; L^1(\dom)) \cap L^{2+2/d}(Q_T), \\
  & \pa_t u_i^{(N,\eta)}(\omega,\cdot,\cdot)\in L^{\rho_2}(0,T;W^{1,\rho_2}(\dom)'), 
	\quad i=1,\ldots,n,
\end{align*}
where $\rho_2=(2d+2)/(2d+1)$, $u^{(N,\eta)}(0)=u^0$ in the sense
of $W^{1,2d+2}(\dom)'$, and \eqref{WZ.weak} is satisfied.

Second, if $a_{i0}>0$, $a_{ii}=0$ for $i=1,\ldots,n$, 
we conclude from the techniques of
\cite{ChJu06} that for a.e.\ $\omega\in\Omega$, there exists a global-in-time
weak solution $u^{(N,\eta)}$ to \eqref{WZ.weak}
satisfying $u^{(N,\eta)}_i(\omega,\cdot,\cdot)\ge 0$ a.e.\ in $\Omega\times(0,T)$
and the weak formulation \eqref{WZ.weak}. However, we obtain less regularity:
\begin{align*}
  & u_i^{(N,\eta)}(\omega,\cdot,\cdot)\in L^{\rho_1}(0,T;W^{1,\rho_1}(\dom)) 
	\cap L^\infty(0,T; L^1(\dom)) \cap L^{1+2/d}(Q_T), \\
  & \pa_t u_i^{(N,\eta)}(\omega,\cdot,\cdot)\in L^{\rho_2}(0,T;W^{1,\rho_2}(\dom)'), \\
	& (u_i^{(N,\eta)}u_j^{(N,\eta)})(\omega,\cdot,\cdot)\in
	L^{\rho_2}(0,T;W^{1,\rho_2}(\dom)),
\end{align*}
for $i=1,\ldots,n$, $j\neq i$, where $\rho_1=(d+2)/(d+1)$; 
see \cite{ChJu06} and the Appendix. 
In the weak formulation \eqref{WZ.weak}, we interpret the expression
$\sum_{j=1}^n A_{ij}(u^{(N,\eta)})\na u^{(N,\eta)}$ here as
$$
  a_{i0}\na u_i^{(N,\eta)} + \sum_{j=1,\,j\neq i}^n a_{ij}
	\na(u_i^{(N,\eta)}u_j^{(N,\eta)}).
$$
The nonnegativity of $u_i^{(N,\eta)}$ is a consequence of the entropy
method (see, e.g., \cite{CDJ18}) applied to the weak formulation \eqref{WZ.weak}. 
This formulation is important 
since the initial datum associated to the strong formulation 
\eqref{2.ode} is projected to the Galerkin space and $\Pi_N(u_i^0)$ may have no sign.
In the weak formulation, the projection is taken care of the test function
and we are allowed to work with the nonnegative initial datum $u_i^0$.

The proof in \cite{ChJu06,CDJ18}
provides a priori estimates for $u^{(N,\eta)}$ via the entropy inequality, but
they depend on $\eta$ because of the dependence of the source term $f_j$ on $\eta$.
Still, it is possible to pass to the limit $\eta\to 0$, since the solution to an
ordinary differential equation involving the Wong--Zakai approximation converges 
in mean to the solution to the corresponding stochastic differential equation
\cite[Chapter 6, Theorem 7.1]{IkWa89}. We can apply this result since
the nonlinearities in the strong form associated to \eqref{WZ.weak} are
Lipschitz continuous (not uniform in $N$).
We conclude that $u^{(N,\eta)}\to u^{(N)}$ in probability up to the stopping
time $\tau_R$ as $\eta\to 0$, where $u^{(N)}$ is the unique solution to
\eqref{2.sde}--\eqref{2.ic}. We deduce that $u^{(N)}_i(x,t)\ge 0$
for a.e.\ $(x,t)\in\dom\times(0,T\wedge\tau_R)$ $\Prob$-a.s.\ and $i=1,\ldots,n$. 

\begin{remark}\rm
The Wong--Zakai approximation is only needed to conclude the nonnegativity of
$u_i^{(N)}$. Another approach is to apply a stochastic version of the
Stampacchia truncation method; see \cite{CPT16}. Generally, maximum principle
arguments do not apply to cross-diffusion systems. For the present system, however,
this is possible since the off-diagonal diffusion coefficients in \eqref{1.A}
and the stochastic diffusion term vanish when $u_i=0$. We leave the details 
to the reader.
\qed
\end{remark}

\subsection{Entropy estimates}

We prove some estimates uniform in the Galerkin dimension $N$ showing that
the solution is actually global in time. The starting point
is a stochastic version of the entropy inequality.

\begin{lemma}[Entropy inequality]\label{lem.ei}
The solution $u^{(N)}$ to \eqref{2.sde}--\eqref{2.ic} satisfies for 
$0<t<T\wedge\tau_R$,
\begin{align}
  \E & \int_\dom h(u^{(N)}(t))\d x 
	+ 2\E\int_0^t\int_\dom\sum_{i=1}^n \pi_i\big(2a_{i0}|\na(u_i^{(N)})^{1/2}|^2
	+ a_{ii}|\na u_i^{(N)}|^2\big)\d x\d s \nonumber \\
  &{}+ 2\E\int_0^t\int_\dom\sum_{i,j=1,\,j\neq i}^n\pi_ia_{ij}
	|\na(u_i^{(N)}u_j^{(N)})^{1/2}|^2\d x\d s
	\le C(T) + C(T)\E\int_\dom h(u^{(N)}(0)^+)\d x, \label{2.ei}
\end{align}
where $C(T)>0$ depends on $T$ but not on $N$ or $R$ and 
$(u^{(N)}(0)^+)_i=\max\{0,u_i^{(N)}(0)\}$.
\end{lemma}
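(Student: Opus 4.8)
The plan is to derive \eqref{2.ei} by applying It\^o's formula to the entropy functional $u\mapsto\int_\dom h(u)\,\d x$ along the Galerkin process $u^{(N)}$ solving \eqref{2.sde}, and then to take expectations. Writing $w_i=\pa h/\pa u_i=\pi_i\log u_i$ for the entropy variables and noting that the Hessian is diagonal, $\pa^2 h/\pa u_i\pa u_j=\delta_{ij}\pi_i/u_i$, It\^o's formula splits the evolution of the entropy into three groups of terms: a drift contribution from the (projected) diffusion operator, the It\^o correction coming from the quadratic variation of the noise, and a stochastic-integral term. Since $h$ is singular as $u_i\to 0$ and $u^{(N)}$ is only known to be nonnegative rather than strictly positive, I would not apply It\^o directly to $h$; instead I would regularize the entropy density (replacing $\log u_i$ by $\log(u_i+\delta)$) or, matching the construction of Section \ref{sec.wz}, first establish the identity for the strictly positive Wong--Zakai approximation $u^{(N,\eta)}$ by the pathwise chain rule, and pass to the limit only at the end.

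For the drift term I would test the diffusion operator with $w_i$ and integrate by parts, the boundary term vanishing by the no-flux condition \eqref{1.bic}. Using $\sum_j A_{ij}(u)\na u_j=\sum_k B_{ik}(w)\na w_k$ with $B=A\,(h'')^{-1}$ positive semidefinite, together with the detailed-balance condition (A3), this reproduces exactly the nonnegative entropy-production integral in the explicit form on the left-hand side of \eqref{2.ei}, namely the $|\na\sqrt{u_i}|^2$, $|\na u_i|^2$, and $|\na\sqrt{u_iu_j}|^2$ contributions, as in the deterministic identity \eqref{1.ei}. The It\^o correction $\tfrac12\int_\dom\sum_{i,j}(\pi_i/u_i)(\Pi_N\sigma_{ij})^2\,\d x$ I would control by the third term of Assumption (A5), which is tailored precisely to absorb the singular weight $1/u_i$ against $\sigma_{ij}^2$ by $C_h(1+\sum_i u_i(\log u_i-1))\le C(1+\int_\dom h(u)\,\d x)$; the vanishing of $\sigma_{ij}$ at $u_i=0$ is what tames the singularity. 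In the Wong--Zakai formulation the corresponding role is played by the explicit Stratonovich correction $-\tfrac12\sum_{j,k}(\pa\sigma_{ij}/\pa u_k)\sigma_{kj}$, bounded by the second term of (A5).

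Taking expectations removes the stochastic-integral term, provided it is a genuine martingale rather than merely a local one. This holds on $[0,t\wedge\tau_R]$ because the stopping time forces $\|u^{(N)}\|_{L^2(\dom)}\le R$, hence $\int_\dom h(u^{(N)})\,\d x\le C(R)$, while its quadratic-variation density is integrable by the first term of (A5). Collecting the terms I would arrive at
\begin{align*}
  \E\int_\dom h(u^{(N)}(t))\,\d x &+ \E\int_0^t(\text{entropy production})\,\d s \\
  &\le \E\int_\dom h(u^{(N)}(0)^+)\,\d x
  + C\int_0^t\Big(1+\E\int_\dom h(u^{(N)})\,\d x\Big)\d s ,
\end{align*}
and a Gronwall argument then yields \eqref{2.ei} with a constant $C(T)$ independent of $N$ and $R$. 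Finally I would remove the regularization ($\delta\to 0$, and/or $\eta\to 0$), using lower semicontinuity for the convex nonnegative entropy and production terms and dominated convergence for the remainder.

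The main obstacle is precisely this boundary singularity of the logarithmic entropy: with only nonnegativity of $u^{(N)}$ at hand, It\^o's formula is not directly available, and the entire regularization-and-limit procedure must be kept under control by the structural bounds in (A5), which were designed for exactly this purpose. A secondary technical point is the interplay of the projection $\Pi_N$ with the singular weights, since $w_i=\pi_i\log u_i\notin H_N$ and $\Pi_N$ does not commute with multiplication by $1/u_i$; both the drift test-function step and the bound on the It\^o correction then require care (using the self-adjointness of $\Pi_N$ and the vanishing of $\sigma_{ij}$ at $u_i=0$) to keep the final constant independent of $N$.
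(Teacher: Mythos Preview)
Your proposal is correct and essentially mirrors the paper's proof: the paper takes your first option (the $\delta$-regularization $h_\delta$ of the entropy, together with an auxiliary smoothing $h_\delta^+$ of the positive part at $t=0$ to cope with the possible sign change of $\Pi_N u^0$), applies It\^o's formula to $u^{(N)}$ up to $\tau_R$, takes expectation to remove the stochastic integral, passes $\delta\to 0$ via monotone/dominated convergence, rewrites the dissipation using detailed balance, bounds the It\^o correction by (A5), and closes with Gronwall. The only minor difference in ordering is that the paper performs the limit $\delta\to 0$ \emph{before} the detailed-balance rewriting and before Gronwall, which is slightly cleaner since the explicit form $|\na\sqrt{u_iu_j}|^2$ of the production terms only emerges at $\delta=0$.
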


\begin{proof}
Let $u^{(N)}$ be the solution to \eqref{2.sde}--\eqref{2.ic} up to the stopping time
$\tau_R$. 
Since the entropy density defined in \eqref{1.h} is not a $C^2$ function
on $[0,\infty)^n$, we cannot apply the It\^o lemma to this function, and we need
to regularize. Let $\delta>0$ and define 
\begin{align*}
  h_\delta(u) &= \sum_{i=1}^n \pi_i\big((u_i+\delta)(\log(u_i+\delta)-1)+1\big)\quad
	\mbox{for }u\in[0,\infty)^n, \\
	h_{\delta}^+(u) &= \sum_{i=1}^n \pi_i
	\big((g_\delta(u_i) +\delta)(\log(g_\delta(u_i) +\delta)-1)+1\big)
	\quad\mbox{for }u\in\R^n,
\end{align*}
where $g_\delta$ is a smooth regularization of $z^+=\max\{0,z\}$ such that 
$g_\delta(z)\to z^+$ as $\delta\to 0$,
$g_\delta(z)+\delta>0$ for $z\in\R$, and $g_\delta(z)=z$ for $z\ge 0$. 
Then $h_\delta\in C^2([0,\infty)^n;$ $[0,\infty))$ and
$h_\delta^+\in C^2(\R^n;[0,\infty))$.
Note that these regularizations are different from that one used in \cite{DHJKN20}. 
Since $u_i^{(N)}(t)\ge 0$ for $t>0$ $\Prob$-a.s., we have by definition
$h_\delta^+(u^{(N)}(t))=h_\delta(u^{(N)}(t))$ $\Prob$-a.s.
The second regularization $h_\delta^+$ 
is needed since $u_i^{(N)}(0)=\Pi_N(u_i^0)$ may have no sign.
Because of It\^o's lemma and $g_\delta'(u_i^{(N)}(t))=1$, 
$g_\delta''(u_i^{(N)}(t))=0$ for $t>0$, we find that for $t>0$,
\begin{align}
  \int_\dom & h_\delta(u^{(N)}(t\wedge\tau_R))\d x 
	- \int_\dom h_\delta^+(u^N(0))\d x \nonumber \\
	&= -\int_0^{t\wedge\tau_R}\int_\dom\sum_{i=1}^n\pi_i a_{0i}
	\frac{|\na u_i^{(N)}|^2}{u_i^{(N)}+\delta}\d x\d s 
	- \int_0^{t\wedge\tau_R}\int_\dom\sum_{i,j=1}^n\pi_ia_{ij}u_j^{(N)}
	\frac{|\na u_i^{(N)}|^2}{u_i^{(N)}+\delta}\d x\d s \nonumber \\
	&\phantom{xx}{}- \int_0^{t\wedge\tau_R}\int_\dom\sum_{i,j=1}^n\pi_i 
	a_{ij}\frac{u_i^{(N)}}{u_i^{(N)}+\delta}\na u_i^{(N)}\cdot\na u_j^{(N)}\d x\d s 
	\nonumber \\
  &\phantom{xx}{}
	+ \int_0^{t\wedge\tau_R}\sum_{i,j=1}^n\bigg(\int_\dom\pi_i\sigma_{ij}(u^{(N)})
	\log(u_i^{(N)}+\delta)\d x\bigg)\d W_j(t) \nonumber \\
	&\phantom{xx}{}+ \frac12\int_0^{t\wedge\tau_R}\int_\dom\sum_{i,j=1}^n\pi_i
	\frac{\sigma_{ij}(u^{(N)})^2}{u_i^{(N)} + \delta}\d x\d s. \label{2.aux0}
\end{align}
We take the expectation on both sides and observe that the expectation of the
It\^o integral vanishes:
\begin{align}
  \E & \int_\dom h_\delta(u^{(N)}(t\wedge\tau_R))\d x 
	- \E\int_\dom h_\delta^+(u^N(0))\d x \nonumber \\
	&= -\E\int_0^{t\wedge\tau_R}\int_\dom\sum_{i=1}^n\pi_i a_{0i}
	\frac{|\na u_i^{(N)}|^2}{u_i^{(N)}+\delta}\d x\d s 
	- \E\int_0^{t\wedge\tau_R}\int_\dom\sum_{i,j=1}^n\pi_ia_{ij}u_j^{(N)}
	\frac{|\na u_i^{(N)}|^2}{u_i^{(N)}+\delta}\d x\d s \nonumber \\
	&\phantom{xx}{}- \E\int_0^{t\wedge\tau_R}\int_\dom\sum_{i,j=1}^n\pi_i 
	a_{ij}\frac{u_i^{(N)}}{u_i^{(N)}+\delta}\na u_i^{(N)}\cdot\na u_j^{(N)}\d x\d s 
	\nonumber \\
	&\phantom{xx}{}+ \frac12\E\int_0^{t\wedge\tau_R}\int_\dom\sum_{i,j=1}^n\pi_i
	\frac{\sigma_{ij}(u^{(N)})^2}{u_i^{(N)} + \delta}\d x\d s 
	=: I_1^\delta + \cdots + I_4^\delta. \label{2.aux}
\end{align}

We wish to perform the limit $\delta\to 0$ in \eqref{2.aux}. By continuity,
$$
  h_\delta(u^{(N)}(\omega,x,t\wedge\tau_R))\to h(u^{(N)}(\omega,x,t\wedge\tau_R))
	\quad\mbox{for a.e. }(\omega,x,t)\in\Omega\times\dom\times(0,T).
$$
Moreover, for given $(\omega,x)\in\Omega\times\dom$, there exists $C>0$ such that
for all $\delta>0$,
$$
  \big(u_i^{(N)}(\log u_i^{(N)}-1)+1\big)(\omega,x,t\wedge\tau_R)
	\le C\big(1+u^{(N)}(\omega,x,t\wedge\tau_R)^2\big),
$$
and the right-hand side is uniformly integrable in $\Omega\times\dom$ for a fixed
$t\in[0,T\wedge\tau_R]$ (because of the definition of the stopping time). 
We conclude from the dominated convergence theorem that
\begin{align*}
  \E\int_\dom h_\delta(u^{(N)}(t\wedge\tau_R))\d x
	&\to \E\int_\dom h(u^{(N)}(t\wedge\tau_R))\d x, \\
  \E\int_\dom  h_\delta^+(u^N(0))\d x &\to \E\int_\dom h(u^{(N)}(0)^+)\d x
	\quad\mbox{as }\delta\to 0,
\end{align*}
recalling that $(u^{(N)}(0)^+)_i = \max\{0,u_i^{(N)}(0)\}$.
The limit $\delta\to 0$ in $I_1^\delta$, $I_2^\delta$, and $I_4^\delta$ can be
performed because of the monotone convergence theorem, while the dominated
convergence theorem allows us to pass to the limit in $I_3^\delta$. Then
the limit $\delta\to 0$ in \eqref{2.aux} leads to
\begin{align}
  \E & \int_\dom h(u^{(N)}(t\wedge\tau_R))\d x 
	- \E\int_\dom h(u^{(N)}(0)^+)\d x \nonumber \\
  &= -\E\int_0^{t\wedge\tau_R}\int_\dom\sum_{i=1}^n\pi_i a_{0i}
	\frac{|\na u_i^{(N)}|^2}{u_i^{(N)}}\d x\d s
	- \E\int_0^{t\wedge\tau_R}\int_\dom\sum_{i,j=1}^n\pi_i a_{ij}
	u_j^{(N)}\frac{|\na u_i^{(N)}|^2}{u_i^{(N)}}\d x\d s \nonumber \\
	&\phantom{xx}{}- \E\int_0^{t\wedge\tau_R}\int_\dom\sum_{i,j=1}^n\pi_i 
	a_{ij}\na u_i^{(N)}\cdot\na u_j^{(N)}\d x\d s 
	+ \frac12\E\int_0^{t\wedge\tau_R}\int_\dom\sum_{i,j=1}^n\pi_i
	\frac{\sigma_{ij}(u^{(N)})^2}{u_i^{(N)}}\d x\d s. \label{2.aux2}
\end{align}
Because of the detailed-balance condition \eqref{2.dbc}, 
$(\pi_ia_{ij})$ is symmetric. Thus, the second and third integrand on the right-hand
side can be formulated as
\begin{align*}
  \sum_{i,j=1}^n & \pi_ia_{ij}\bigg(u_j^{(N)}\frac{|\na u_i^{(N)}|^2}{u_i^{(N)}}
	+ \na u_i^{(N)}\cdot\na u_j^{(N)}\bigg) \\
	&= \frac12\sum_{i,j=1}^n \pi_ia_{ij}\bigg(u_j^{(N)}\frac{|\na u_i^{(N)}|^2}{u_i^{(N)}}
	+ u_i^{(N)}\frac{|\na u_j^{(N)}|^2}{u_j^{(N)}}
	+ 2\na u_i^{(N)}\cdot\na u_j^{(N)}\bigg) \\
	&= \frac12\sum_{i,j=1}^n\pi_ia_{ij}u_i^{(N)}u_j^{(N)}\big(
	|\na\log u_i^{(N)}|^2 + |\na\log u_j^{(N)}|^2 
	+ 2\na\log u_i^{(N)}\cdot\na\log u_j^{(N)}\big) \\
	&= \frac12\sum_{i,j=1}^n\pi_ia_{ij}u_i^{(N)}u_j^{(N)}
	\big|\na\log(u_i^{(N)}u_j^{(N)})\big|^2
	= 2\sum_{i,j=1}^n\pi_ia_{ij}\big|\na(u_i^{(N)}u_j^{(N)})^{1/2}\big|^2.
\end{align*}
By Assumption (A5), the last integral in \eqref{2.aux2} is estimated according to
$$
  \frac12\E\int_0^{t\wedge\tau_R}\int_\dom\sum_{i,j=1}^n\pi_i
	\frac{\sigma_{ij}(u^{(N)})^2}{u_i^{(N)}}\d x\d s
	\le C\int_0^{t\wedge\tau_R}\int_\dom(1 + h(u^{(N)}))\d x\d s.
$$
Inserting these expressions into \eqref{2.aux2} and applying Gronwall's inequality
gives
\begin{align*}
  \E & \int_\dom h(u^{(N)}(t\wedge\tau_R))\d x + 4\E\int_0^{t\wedge\tau_R}\int_\dom
	\sum_{i=1}^n\pi_ia_{i0}|\na(u_i^{(N)})^{1/2}|^2\d x\d s \\
	&{}+ 2\E\int_0^{t\wedge\tau_R}\int_\dom\sum_{i,j=1}^n\pi_ia_{ij}
	|\na(u_i^{(N)}u_j^{(N)})^{1/2}|^2\d x\d s
	\le C(T) + C(T)\E\int_\dom h(u^{(N)}(0)^+)\d x,
\end{align*}
where $C(T)>0$ is independent of $N$ and $R$. Consequently, the right-hand side
does not depend on the chosen sequence of stopping times $\tau_R$, and we can pass
to the limit $R\to \infty$.
The limit $u_i^{(N)}(0)=\Pi_N(u_i^0)\to u_i^0\ge 0$ in $L^2(\dom)$ as $N\to\infty$
yields $h(u^{(N)}(0)^+)\to h(u^0)$ in $L^1(\dom)$. Thus, the right-hand side of
\eqref{2.ei} is independent of $N$ and $R$.
\end{proof}

The entropy inequality in Lemma \ref{lem.ei} provides a uniform bound for
$\sup_{0\le t\le T}\E\|u^{(N)}(t)\|_{L^1(\dom)}$ but we need
a uniform bound for $\E(\sup_{0<t<T}\|u^{(N)}(t)\|_{L^1(\dom)})$,
which will be used later to obtain higher order moment estimates.
This is shown in the following lemma.

\begin{lemma}
The solution $u^{(N)}$ to \eqref{2.sde}--\eqref{2.ic} satisfies the following bounds:
\begin{align}
  \sup_{N\in\N}\E\|u^{(N)}\|_{L^\infty(0,T;L^1(\dom))} &\le C(u^0,T), \label{2.L1} \\
	\sup_{N\in\N}\sum_{i=1}^n\big(a_{i0}\E\|(u_i^{(N)})^{1/2}\|_{L^2(0,T;H^1(\dom))}^2
	+ a_{ii}\E\|u_i^{(N)}\|_{L^2(0,T;H^1(\dom))}^2\big) &\le C(u^0,T), \label{2.H1} \\
	\sup_{N\in\N}\sum_{j\neq i}a_{ij}\E\|\na(u_i^{(N)}u_j^{(N)})^{1/2}
	\|_{L^2(0,T;L^2(\dom))}^2 &\le C(u^0,T). \label{2.sqrt2}
\end{align}
In particular, the solution $u^{(N)}$ is global in time for $d\ge 1$
(with self-diffusion) and for $d\le 3$ (without self-diffusion). 
\end{lemma}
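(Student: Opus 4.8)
The three bounds split naturally. The two gradient estimates \eqref{2.H1} and \eqref{2.sqrt2} are, up to the zero-order parts of the norms, exactly the entropy production already isolated in Lemma \ref{lem.ei}; the genuinely new ingredient is the expectation-of-supremum bound \eqref{2.L1}, in which the supremum sits \emph{inside} $\E$. My plan is therefore to return to the \emph{pathwise} entropy identity, i.e.\ the $\delta\to0$ limit of \eqref{2.aux0} taken before passing to the expectation, which reads schematically
\[
  H(t\wedge\tau_R) + D_1(t) + D_2(t) = H(0) + M(t) + R(t),
\]
where $H(t)=\int_\dom h(u^{(N)}(t))\d x$, the entropy-production terms $D_1,D_2\ge0$ collect $4\sum_i\pi_ia_{i0}|\na(u_i^{(N)})^{1/2}|^2$ and $2\sum_{i,j}\pi_ia_{ij}|\na(u_i^{(N)}u_j^{(N)})^{1/2}|^2$ integrated over $\dom\times(0,t\wedge\tau_R)$ (the $i=j$ term producing the self-diffusion contribution $a_{ii}|\na u_i^{(N)}|^2$), $R(t)=\tfrac12\int_0^{t\wedge\tau_R}\int_\dom\sum_{i,j}\pi_i\sigma_{ij}(u^{(N)})^2/u_i^{(N)}\d x\d s\ge0$ is the nondecreasing It\^o correction, and $M(t)$ is the stochastic integral appearing in \eqref{2.aux0}.

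First I would establish \eqref{2.L1}. Since $z\mapsto z(\log z-1)+1$ satisfies $z(\log z-1)+1\ge z-C$ for a universal $C$, it suffices to bound $\E\sup_{t\le T}H(t\wedge\tau_R)$. Because $D_1,D_2\ge0$, discarding them and taking first the supremum over $t\in[0,T]$ and then the expectation gives
\[
  \E\sup_{t\le T}H(t\wedge\tau_R)\le\E H(0) + \E\sup_{t\le T}|M(t)| + \E R(T\wedge\tau_R).
\]
The running supremum of the martingale is handled by the Burkholder--Davis--Gundy inequality, $\E\sup_{t}|M(t)|\le C\,\E\langle M\rangle_{T\wedge\tau_R}^{1/2}$, and both the quadratic variation $\langle M\rangle$ and the correction $R$ are estimated, via Cauchy--Schwarz in $x$ and Assumption (A5), by $C\int_0^{T\wedge\tau_R}\int_\dom(1+h(u^{(N)}))\d x\d s=C\int_0^{T\wedge\tau_R}(|\dom|+H(s))\d s$. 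Bounding this time integral by $T(|\dom|+\sup_{s}H(s\wedge\tau_R))$ inside the square root and applying Young's inequality lets me absorb $\tfrac12\E\sup_sH(s\wedge\tau_R)$ into the left-hand side, while the correction term leaves $C\int_0^T(|\dom|+\E\sup_{s'\le s}H(s'\wedge\tau_R))\d s$. Gronwall's lemma then closes the estimate with a constant $C(u^0,T)$ independent of $N$ and $R$; the initial term is uniformly bounded since $h(v)\le C(1+|v|^2)$ and $\Pi_N$ is an $L^2$-contraction, so $\E H(0)\le C(1+\E\|u^0\|_{L^2(\dom)}^2)$.

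For \eqref{2.H1} and \eqref{2.sqrt2} I would instead take the pathwise identity at $t=T$, take the expectation (the stochastic integral now being a genuine martingale by the integrability just obtained, so $\E M(T\wedge\tau_R)=0$), and discard the nonnegative $H(T\wedge\tau_R)$; this bounds $\E[D_1+D_2]$ at $T\wedge\tau_R$ by $\E H(0)+\E R(T\wedge\tau_R)\le C(u^0,T)$, and letting $R\to\infty$ by monotone convergence yields the gradient (seminorm) parts of \eqref{2.H1}--\eqref{2.sqrt2}. The zero-order parts of the full norms come from \eqref{2.L1}: $\|(u_i^{(N)})^{1/2}\|_{L^2(\dom)}^2=\|u_i^{(N)}\|_{L^1(\dom)}$ is controlled directly, and $\|u_i^{(N)}\|_{L^2(Q_T)}$ (needed when $a_{ii}>0$) is obtained by Gagliardo--Nirenberg interpolation between the $L^\infty(0,T;L^1(\dom))$ bound and the $L^2(0,T;H^1(\dom))$ gradient bound.

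Finally, globality follows from the uniform-in-$R$ bound. For fixed $N$ all norms on the finite-dimensional space $H_N$ are equivalent, so using $u^{(N)}(t)\ge0$ we have $\|u^{(N)}(t)\|_{L^2(\dom)}\le C_N\|u^{(N)}(t)\|_{L^1(\dom)}\le C_N(H(t)+C)$; hence $\E\sup_{t\le T\wedge\tau_R}\|u^{(N)}(t)\|_{L^2(\dom)}\le C_N'$ uniformly in $R$. Since on $\{\tau_R<T\}$ continuity forces $\|u^{(N)}(\tau_R)\|_{L^2(\dom)}=R$, Markov's inequality gives $\Prob(\tau_R<T)\le C_N'/R\to0$; as $\tau_R$ is nondecreasing in $R$ it converges a.s.\ to $T$, the solution exists on all of $[0,T]$, and \eqref{2.L1} on $[0,T]$ follows by Fatou. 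The admissible dimensions ($d\ge1$ with, $d\le3$ without self-diffusion) are precisely those for which the approximate solutions of Section \ref{sec.wz} were constructed. I expect the main obstacle to be the closing of \eqref{2.L1}: one must estimate the running supremum of the martingale by Burkholder--Davis--Gundy and then control both its quadratic variation and the It\^o correction solely through the entropy via Assumption (A5), arranging the Young and Gronwall steps so that the resulting constant stays uniform in $N$ and $R$.
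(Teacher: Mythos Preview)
Your argument for the three bounds \eqref{2.L1}--\eqref{2.sqrt2} is essentially the paper's: return to the pathwise entropy identity \eqref{2.aux0}, pass $\delta\to0$ before taking the expectation, take the supremum in time first, control the stochastic integral by Burkholder--Davis--Gundy and Assumption~(A5), and close with Gronwall; the gradient bounds then follow from the entropy production together with the Poincar\'e--Wirtinger inequality. Where you genuinely diverge is the globality step. The paper deduces a time-integrated $L^2(\dom)$ control---directly from \eqref{2.H1} in the self-diffusion case, and via a Gagliardo--Nirenberg interpolation of $(u_i^{(N)})^{1/2}$ between $L^\infty(0,T;L^2(\dom))$ and $L^2(0,T;H^1(\dom))$ in the case without self-diffusion---and this interpolation is exactly where the restriction $d\le 3$ enters. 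Your route through equivalence of norms on the finite-dimensional space $H_N$ is cleaner and in fact dimension-free; it establishes $\tau_R\to T$ for every $d$. Consequently your attribution of the $d\le 3$ restriction to Section~\ref{sec.wz} is inaccurate: no dimensional constraint is imposed there, and in the paper the restriction comes precisely from the Gagliardo--Nirenberg step you bypassed (and is needed later for the $L^{4/d}(0,T;L^2(\dom))$ moment estimates rather than for globality itself).
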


\begin{proof}
Let $u^{(N)}$ be the solution to \eqref{2.sde}--\eqref{2.ic} up to the stopping time
$\tau_R$ and let $T<\tau_R$. The starting point of the proof is equation \eqref{2.aux0}.
Instead of taking first the expectation as in the proof of
Lemma \ref{lem.ei}, we pass first to the limit $\delta\to 0$. This can be
done as in Lemma \ref{lem.ei} except for the stochastic integral. We claim that
\begin{align}
  \int_0^{T} & \int_\dom\sum_{i,j=1}^n\sigma_{ij}(u^{(N)}(s))
	\log(u_i^{(N)}(s)+\delta)\d x\d W_j(s) \nonumber \\
  &\to \int_0^{T} \int_\dom\sum_{i,j=1}^n\sigma_{ij}(u^{(N)}(s))
	\log(u_i^{(N)}(s))\d x\d W_j(s) \label{2.stoch}
\end{align}
as $\delta\to 0$. 
To prove this limit, we use the stochastic dominated convergence theorem
\cite[Theorem 6.44]{Ebe19}. For this, let
\begin{align*}
  F_\delta(t) &= \int_\dom f_\delta(x)\d x
	= \int_\dom \sum_{i=1}^n\sigma_{ij}(u^{(N)}(t))\log(u_i^{(N)}(t)+\delta)\d x, \\
  F(t) &= \int_\dom f(x)\d x = \int_\dom\sum_{i=1}^n\sigma_{ij}(u^{(N)}(t))
	\log u_i^{(N)}(t)\d x.
\end{align*}
It is clear that $f_\delta(x)\to f(x)$ a.e.\ in $\dom$.
We wish to find an integrable function $g$ such that $|f_\delta(x)|\le g(x)$ for
$x\in\dom$. Let $\delta\in(0,1)$. If $z\in[0,1-\delta)$, we have
$|\log(z+\delta)|\le|\log z|$. If $z\in[1-\delta,1)$, it follows that
$|\log(z+\delta)|\le \log 2$. Finally, if $z>1$, 
$$
  \log(z+\delta) = \int_1^z\frac{\d r}{r} + \int_z^{z+\delta}\frac{\d r}{r}
	\le \int_1^z\frac{\d r}{r} + \int_1^{1+\delta}\frac{\d r}{r}
	= \log z + \log(1+\delta).
$$
Therefore, in view of Assumption (A5) and the entropy inequality in
Lemma \ref{lem.ei}, the function 
$$
  g(x)=\sum_{i=1}^n\sigma_{ij}(u^{(N)}(x,t))(\log u_i^{(N)}+\log 2)
$$
is integrable in $\dom$. We deduce from the Lebesgue dominated convergence theorem
that $F_\delta(t)\to F(t)$ as $\delta\to 0$.
By the definition of $g$ and Assumption (A5), 
we can dominate $F_\delta$ pointwise for any $\delta>0$ according to
\begin{equation}\label{2.FG}
  |F_\delta(t)|\le G(x) := C_h\bigg(\int_\dom(1+h(u^{(N)}(x,t)))\d x\bigg)^{1/2}
	+ C\int_\dom u_i^{(N)}(x,t)\d x + C,
\end{equation}
and $G$ is square-integrable, since
$$
  \|G\|_{L^2(Q_T)}^2
	\le C(T) + C\E\int_0^T\int_\dom\big(h(u^{(N)}(x,t)) + |u^{(N)}(x,t)|^2\big)\d x\d s
	< \infty.
$$
By the stochastic dominated convergence theorem, we infer from the pointwise 
convergence $F_\delta(t)\to F(t)$ and the bound \eqref{2.FG} that \eqref{2.stoch} holds,
proving the claim.

Repeating the calculations following \eqref{2.aux2}, we obtain
\begin{align}
  \int_\dom & h(u^{(N)}(t))\d x + 4\int_0^{t}\int_\dom
	\sum_{i=1}^n\pi_i a_{i0}|\na(u_i^{(N)})^{1/2}|^2\d x\d s \nonumber \\
	&\phantom{xx}{}+ 2\int_0^{t}\int_\dom\sum_{i,j=1}^n\pi_i a_{ij}
	|\na(u_i^{(N)}u_j^{(N)})^{1/2}|^2\d x\d s \nonumber \\
	&\le \int_\dom h(u^0)\d x + \int_0^{t}\int_\dom\sum_{i,j=1}^n
	\sigma_{ij}(u^{(N)})\log u_i^{(N)}\d x\d W_j(s) \nonumber \\
	&\phantom{xx}{}+ \frac12\int_0^{t}\int_\dom\sum_{i,j=1}^n\pi_i
	\frac{\sigma_{ij}(u^{(N)})^2}{u_i^{(N)}}\d x\d s. \label{2.aux3}
\end{align}
We take the supremum over $0<t<T$ and the expectation and 
apply the Burkholder--Davis--Gundy inequality:
\begin{align*}
  \E & \sup_{0<t<T}\int_\dom h(u^{(N)}(t))\d x
	+ 4\E\int_0^{T}\int_\dom
	\sum_{i=1}^n\pi_i a_{i0}|\na(u_i^{(N)})^{1/2}|^2\d x\d s \\
  &\phantom{xx}{}+ 2\E\int_0^{T}\int_\dom\sum_{i,j=1}^n\pi_i a_{ij}
	|\na(u_i^{(N)}u_j^{(N)})^{1/2}|^2\d x\d s \\
	&\le \E\int_\dom h(u^0)\d x 
	+ \E\bigg(\int_0^{T}\int_\dom\sum_{i,j=1}^n\big(\sigma_{ij}(u^{(N)})
	\log u_i^{(N)}\big)^2\d x\d s\bigg)^{1/2} \\
	&\phantom{xx}{}+ \frac12\E\int_0^{T}\int_\dom\sum_{i,j=1}^n\pi_i
	\frac{\sigma_{ij}(u^{(N)})^2}{u_i^{(N)}}\d x\d s \\
	&\le \E\int_\dom h(u^0)\d x + C\E\int_0^{T}
	\bigg(1+ \int_\dom h(u^{(N)})\d x\bigg)\d s,
\end{align*}
where we used Assumption (A5) in the last step. 
By Fubini's theorem and Gronwall's lemma, we conclude that
$$
  \E\sup_{0<t<T}\int_\dom h(u^{(N)}(t))\d x
	\le C(T) + C(T)\E\int_\dom h(u^{(N)}(0)^+)\d x\quad\mbox{for }0<T<\tau_R,
$$
where $C(T)>0$ is independent of $N$ and $R$. Passing to the limit $R\to\infty$
results in
$$
  \E\sup_{0<t<T}\int_\dom h(u^{(N)}(t))\d x\le C(T) 
	+ C(T)\E\int_\dom h(u^{(N)}(0)^+)\d x \le C(u^0,T).
$$
for $T>0$. Since the entropy density dominates the $L^1$ norm, this shows that
$$
  \sup_{N\in\N}\E\Big(\sup_{0<t<T}\|u^{(N)}(t)\|_{L^1(\dom)}\Big)\le C(u^0,T).
$$
Estimate \eqref{2.H1} is obtained from the Poincar\'e--Wirtinger inequality, 
the previous estimate, and the gradient estimate in \eqref{2.ei}.
Moreover, \eqref{2.sqrt2} also follows from \eqref{2.ei}.

It remains to show that $u^{(N)}$ is global in time. 
In case with self-diffusion, estimate \eqref{2.H1} immediately implies that
$\E\|u_i^{(N)}\|_{L^2(0,T;L^2(\dom))}^2\le C$.
In case without self-diffusion, we deduce from the Gagliardo--Nirenberg inequality
with $\theta=d/4$, the H\"older inequality with $p=4/d$, $q=4/(4-d)$
(such that $1/p+1/q=1$), and
estimates \eqref{2.L1} and \eqref{2.H1} that
\begin{align}
  \E&\|u_i^{(N)}\|_{L^{4/d}(0,T;L^2(\dom))}
  = \E\bigg(\int_0^T\|(u_i^{(N)})^{1/2}\|_{L^4(\dom)}^{8/d}\d t\bigg)^{d/4} 
	\nonumber \\
	&\le  C\E\bigg(\int_0^T\|(u_i^{(N)})^{1/2}\|_{H^1(\dom)}^{8\theta/d}
	\|(u_i^{(N)})^{1/2}\|_{L^2(\dom)}^{8(1-\theta)/d}\d t\bigg)^{d/4} \nonumber \\
	&\le C\E\bigg\{\|u_i^{(N)}\|_{L^\infty(0,T;L^1(\dom))}^{1-d/4}
	\bigg(\int_0^T\|(u_i^{(N)})^{1/2}\|_{H^1(\dom)}^2 \d t\bigg)^{d/4}\bigg\} 
	\nonumber \\
	&\le \Big(\E\|u_i^{(N)}\|_{L^\infty(0,T;L^1(\dom))}\Big)^{1-d/4}
	\bigg\{\E\int_0^T\|(u_i^{(N)})^{1/2}\|_{H^1(\dom)}^2\d t
	\bigg\}^{d/4} \le C. \label{2.4d.no}
\end{align}
At this point, we need the restriction $d\le 3$.
As the $L^2(\dom)$ is controlled in both cases, the stopping time $\tau_R$ 
equals the final time $T$, and the solution $u^{(N)}$ is global in time.
\end{proof}

\subsection{Further uniform estimates}

Next, we show some estimates for higher-order moments. This step was not necessary in
\cite{DHJKN20}, since the solutions in that paper are bounded.

\begin{lemma}[Higher-order moments]\label{lem.mom}
Let $u^{(N)}$ be the solution to \eqref{2.sde}--\eqref{2.ic} and let $p\ge 2$. Then,
for any $i=1,\ldots,n$,
\begin{align}
  \sup_{N\in\N}\E\|u^{(N)}\|_{L^\infty(0,T;L^1(\dom))}^p
	&\le C(p,u^0,T), \label{2.hp} \\
	\sup_{N\in\N}\big(a_{i0}\E
	\|(u_i^{(N)})^{1/2}\|_{L^2(0,T;H^1(\dom))}^p + a_{ii}\E
	\|u_i^{(N)}\|_{L^2(0,T;H^1(\dom))}^p\big) &\le C(p,u^0,T), \label{2.nablap}
\end{align}
where $C(p,u^0,T)>0$ does not depend on $N$.
\end{lemma}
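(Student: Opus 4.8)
The plan is to raise the pathwise entropy balance~\eqref{2.aux3} to the $p$-th power and take expectations, using the Burkholder--Davis--Gundy inequality for the stochastic integral and Assumption~(A5) to estimate both the quadratic variation and the It\^o correction. Abbreviate $E(t)=\int_\dom h(u^{(N)}(t))\d x$, let $D(t)\ge 0$ denote the dissipation terms on the left-hand side of~\eqref{2.aux3}, and let $M(t)$ and $R(t)$ be the martingale and the It\^o-correction integrals on the right-hand side. Since the solution is global and nonnegative, \eqref{2.aux3} reads $E(t)+D(t)\le E(0)+M(t)+R(t)$ with $E(0)=\int_\dom h(u^0)\d x$. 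Because $E,D\ge 0$ and $R$ is nondecreasing, this gives both
$$
  \sup_{0<s<t}E(s)\le E(0)+\sup_{0<s<t}|M(s)|+R(t)
	\quad\text{and}\quad
	D(t)\le E(0)+\sup_{0<s<t}|M(s)|+R(t).
$$

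First I would prove~\eqref{2.hp}. Set $\Phi(t)=\E\sup_{0<s<t}E(s)^p$, raise the first inequality to the power $p$, and take the expectation. The initial term is finite by Assumption~(A2), since $h(u^0)\le C(1+|u^0|^2)$. For the It\^o correction, Assumption~(A5) yields $R(t)\le C\int_0^t(1+E(s))\d s$, so by Jensen's inequality $\E R(t)^p\le C\int_0^t(1+\Phi(s))\d s$, which supplies the integral term needed for Gronwall's lemma. For the martingale, the quadratic variation is $\langle M\rangle_t=\sum_{j=1}^n\int_0^t\big(\int_\dom\sum_{i=1}^n\sigma_{ij}(u^{(N)})\log u_i^{(N)}\d x\big)^2\d s$; the Cauchy--Schwarz inequality in $x$ together with the first bound in Assumption~(A5) controls the integrand by $C(1+E(s))$, whence $\langle M\rangle_t\le C\int_0^t(1+E(s))\d s\le CT(1+\sup_{0<s<t}E(s))$. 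The Burkholder--Davis--Gundy inequality then gives $\E\sup_{0<s<t}|M(s)|^p\le C_p\E\langle M\rangle_t^{p/2}\le C(1+\E(\sup_{0<s<t}E(s))^{p/2})$, and Young's inequality converts the $(p/2)$-power into $\eps\Phi(t)+C_\eps$. Choosing $\eps$ small enough to absorb $\eps\Phi(t)$ into the left-hand side leaves $\tfrac12\Phi(t)\le C+C\int_0^t\Phi(s)\d s$, and Gronwall's lemma gives $\Phi(T)\le C(p,u^0,T)$ uniformly in $N$.

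To make the absorption rigorous I would first carry out this computation up to the localizing stopping time $\tau_K=\inf\{t\in[0,T]:\|u^{(N)}(t)\|_{L^2(\dom)}\ge K\}$, on which $E\le C(1+K^2)$ and hence $\Phi$ is finite, obtain the bound with a constant independent of $K$ and $N$, and then let $K\to\infty$; since $u^{(N)}\in C^0([0,T];H_N)$ pathwise, $\tau_K\to T$ almost surely and the monotone convergence theorem yields~\eqref{2.hp}. For the gradient estimate~\eqref{2.nablap} I would raise the second inequality $D(t)\le E(0)+\sup_{0<s<t}|M(s)|+R(t)$ to the power $p/2$ (the $L^2$-in-time norms in~\eqref{2.nablap} already contain one time integration, so only the $(p/2)$-moment of $D$ is required), take expectations, and invoke the moment bounds just established for $E(0)$, $M$, and $R$. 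The $L^2(\dom)$-part of the $H^1$ norm is handled by $\|(u_i^{(N)})^{1/2}\|_{L^2(\dom)}^2=\|u_i^{(N)}\|_{L^1(\dom)}$ and, in the self-diffusion case, by the Poincar\'e--Wirtinger inequality together with~\eqref{2.hp}, exactly as in the derivation of~\eqref{2.H1}.

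I expect the main obstacle to be the stochastic term. The Burkholder--Davis--Gundy inequality produces the $(p/2)$-power of the quadratic variation rather than the $p$-power, so a naive Gronwall argument does not close. The resolution is to bound the quadratic variation by the entropy through Assumption~(A5), to convert the resulting lower-order moment into an absorbable fraction of $\Phi(t)$ via Young's inequality, and to legitimize this absorption by the preliminary localization in $\tau_K$, all while keeping every constant independent of the Galerkin dimension $N$.
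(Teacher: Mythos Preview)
Your proposal is correct and follows the same overall strategy as the paper: raise the pathwise entropy balance \eqref{2.aux3} to the $p$-th power, take expectations, control the martingale term via the Burkholder--Davis--Gundy inequality together with Assumption~(A5), and close with Gronwall's lemma; the $L^1$ bound then comes from the fact that $h$ dominates the $L^1$ norm, and the full $H^1$ norm in \eqref{2.nablap} from Poincar\'e--Wirtinger.

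The one technical difference worth noting concerns the treatment of the $p/2$-power produced by BDG. You estimate $\langle M\rangle_t\le CT(1+\sup_{s\le t}E(s))$, pass to $\eps\Phi(t)+C_\eps$ via Young's inequality, and then absorb, which forces you to localize with the stopping times $\tau_K$ to ensure $\Phi$ is finite a priori. The paper instead keeps the time integral in the quadratic variation, writing $\E\langle M\rangle_t^{p/2}\le C\E\big(\int_0^t(1+E(s))\d s\big)^{p/2}$, and then uses Jensen's inequality (together with the elementary bound $a^{p/2}\le 1+a^p$) to obtain $C+C\int_0^t\Phi(s)\d s$ directly. This avoids the absorption step and the auxiliary stopping-time argument altogether, giving a slightly shorter route to the same Gronwall inequality. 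Both arguments are valid and yield constants independent of $N$.
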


\begin{proof}
We raise \eqref{2.aux3} to the power $p\ge 2$, take the expectation, apply
the Burkholder--Davis--Gundy inequality to the stochastic term, and use Assumption 
(A5) to find that 
\begin{align*}
  \E & \bigg(\int_\dom h(u^{(N)}(t))\d x\bigg)^p 
	+ 4\E\bigg(\int_0^T\int_\dom\sum_{i=1}^n\pi_i a_{i0}
	|\na(u_i^{(N)})^{1/2}|^2\d x\d s\bigg)^p \\
	&\phantom{xx}{}+ 2\E\bigg(\int_0^T\int_\dom\sum_{i,j=1}^n\pi_i a_{ij}
	|\na(u_i^{(N)}u_j^{(N)})^{1/2}|^2\d x\d s\bigg)^p \\
	&\le C(p,u^0)
	+ C\E\bigg(\int_0^T\sum_{i,j=1}^n\|\sigma_{ij}(u^{(N)})\log u_i^{(N)}\|_{L^2(\dom)}^2
	\d s\bigg)^{p/2} \\
	&\phantom{xx}{}+ C\E\bigg(\int_0^T\int_\dom\sum_{i,j=1}^n\pi_i
	\frac{\sigma_{ij}(u^{(N)})^2}{u_i^{(N)}}\d x\d s\bigg)^p \\
	&\le C(p,u^0)
	+ C\E\bigg(\int_0^T\int_\dom(1+h(u^{(N)}))\d x\d s\bigg)^{p/2} \\
	&\phantom{xx}{}+ C\E\bigg(\int_0^T\int_\dom(1+h(u^{(N)}))\d x\d s\bigg)^p.
\end{align*}
The lemma follows after applying Jensen's and
Gronwall's inequality, using the fact that the entropy density dominates
the $L^1$ norm, and applying the Poincar\'e--Wirtinger inequality.
\end{proof}

We derive further higher-order moment estimates from Lemma \ref{lem.mom}.
For this, we distinguish the cases with and without self-diffusion.

\begin{lemma}[Higher-order moments, with self-diffusion]
Let $a_{ii}>0$ for all $i=1,\ldots,n$, let $u^{(N)}$ be the solution to 
\eqref{2.sde}--\eqref{2.ic}, and let $p\ge 2$. Then for any $i=1,\ldots,n$,
\begin{align}
	\sup_{N\in\N}\E\|u_i^{(N)}\|_{L^{2+2/d}(Q_T)}^p 
	&\le C(p,u^0,T), \label{2.Ld} \\
	\sup_{N\in\N}\E\|u_i^{(N)}(t)\|_{L^{2+4/d}(0,T;L^2(\dom))}^p 
	&\le C(p,u^0,T). \label{2.L3}
\end{align}
\end{lemma}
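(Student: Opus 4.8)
The plan is to derive both bounds from the deterministic Gagliardo--Nirenberg inequality applied $\omega$ by $\omega$, using that self-diffusion ($a_{ii}>0$) upgrades the gradient control in Lemma \ref{lem.mom} to a genuine $L^2(0,T;H^1(\dom))$ bound on $u_i^{(N)}$ itself, rather than only on $(u_i^{(N)})^{1/2}$ as in the self-diffusion-free case. Throughout I abbreviate $A:=\|u_i^{(N)}\|_{L^\infty(0,T;L^1(\dom))}$ and $B:=\|u_i^{(N)}\|_{L^2(0,T;H^1(\dom))}$; by \eqref{2.hp}--\eqref{2.nablap} we have $\E A^p\le C(p,u^0,T)$ and $\E B^p\le C(p,u^0,T)$ for every $p\ge 2$, uniformly in $N$.

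For \eqref{2.Ld}, set $q:=2+2/d$ and interpolate in space with $\theta=d/(d+1)$,
$$\|u_i^{(N)}(t)\|_{L^q(\dom)}\le C\|u_i^{(N)}(t)\|_{H^1(\dom)}^{\theta}\|u_i^{(N)}(t)\|_{L^1(\dom)}^{1-\theta},$$
which is admissible for every $d\ge 1$, since $q<2d/(d-2)$ when $d\ge 3$ and $H^1(\dom)$ embeds into every $L^q(\dom)$ when $d\le 2$. The exponent $\theta$ is dictated by $\theta q=2$, so that raising to the power $q$, integrating over $(0,T)$, and extracting the supremum of the $L^1$ norm gives the pathwise bound
$$\|u_i^{(N)}\|_{L^q(Q_T)}^q\le C\,A^{2/d}B^2.$$
For \eqref{2.L3}, set $r:=2+4/d$ and interpolate the spatial $L^2$ norm with $\theta=d/(d+2)$,
$$\|u_i^{(N)}(t)\|_{L^2(\dom)}\le C\|u_i^{(N)}(t)\|_{H^1(\dom)}^{\theta}\|u_i^{(N)}(t)\|_{L^1(\dom)}^{1-\theta},$$
valid for every $d\ge 1$ (the target $L^2$ always lies in the interpolation range). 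Here $\theta$ is fixed by $\theta r=2$, so that the $H^1$ norm enters quadratically; integrating in time then yields
$$\|u_i^{(N)}\|_{L^{r}(0,T;L^2(\dom))}^{r}\le C\,A^{4/d}B^2.$$

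To close both estimates, I take the $1/q$ (respectively $1/r$) root, raise to the power $p$, and take the expectation. In each case the resulting exponents of $A$ and $B$ add up to exactly $p$: for \eqref{2.Ld} one obtains $A^{2p/(dq)}B^{2p/q}$ with $2p/(dq)+2p/q=p$, and analogously for \eqref{2.L3}. Hence a single application of Young's inequality $A^{\alpha}B^{\beta}\le C(A^{p}+B^{p})$, valid whenever $\alpha+\beta=p$, produces a pathwise bound by $C(A^p+B^p)$. Taking expectations and invoking the uniform moment bounds of Lemma \ref{lem.mom} finishes the proof with constants independent of $N$.

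The argument is therefore essentially a deterministic interpolation carried out path by path, the only genuinely stochastic input being the final step, which is clean precisely because Lemma \ref{lem.mom} already supplies every moment of $A$ and $B$. I do not expect a serious obstacle; the one point demanding care is verifying that the Gagliardo--Nirenberg exponents $\theta=d/(d+1)$ and $\theta=d/(d+2)$ stay in $(0,1)$ and that the target integrabilities remain within the Sobolev range for all $d\ge 1$, which is exactly where the full $H^1$ control of $u_i^{(N)}$ afforded by self-diffusion (rather than merely of its square root) is used.
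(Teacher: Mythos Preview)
Your proposal is correct and follows essentially the same route as the paper: both estimates are obtained by applying the Gagliardo--Nirenberg inequality pathwise with the exponents $\theta=d/(d+1)$ for \eqref{2.Ld} and $\theta=d/(d+2)$ for \eqref{2.L3}, using the $L^\infty(0,T;L^1(\dom))$ and $L^2(0,T;H^1(\dom))$ control from Lemma~\ref{lem.mom}. The only cosmetic difference is in the final separation of the product $A^\alpha B^\beta$: the paper uses the Cauchy--Schwarz inequality in $\widetilde\Omega$ (yielding factors like $(\E A^{8p/d})^{1/2}(\E B^{4p})^{1/2}$), whereas you use Young's inequality pathwise to reduce to $\E A^p+\E B^p$; both work because Lemma~\ref{lem.mom} supplies every moment.
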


\begin{proof}
Applying the Gagliardo--Nirenberg
inequality as in \cite[p.~95]{Jue16} yields \eqref{2.Ld}. Estimate \eqref{2.L3}
is obtained from another application of the Gagliardo--Nirenberg inequality. 
For this, let $\theta=d/(d+2)$. Then $\theta(2+4/d)=2$ and
\begin{align*}
  \E&\bigg(\int_0^T\|u_i^{(N)}\|_{L^2(\dom)}^{2+4/d}\d t\bigg)^p
	\le C\E\bigg(\int_0^T\|u_i^{(N)}\|_{H^1(\dom)}^{\theta(2+4/d)}
	\|u_i^{(N)}\|_{L^1(\dom)}^{(1-\theta)(2+4/d)}\d t\bigg)^p \\
	&\le C(T)\E\bigg(\|u_i^{(N)}\|_{L^\infty(0,T;L^1(\dom))}^{4/d}
	\int_0^T\|u_i^{(N)}\|_{H^1(\dom)}^2\d t\bigg)^p \\
  &\le C(T)\Big(\E\|u_i^{(N)}\|_{L^\infty(0,T;L^1(\dom))}^{8p/d}\Big)^{1/2}
	\bigg\{\E\bigg(\int_0^T\|u_i^{(N)}\|_{H^1(\dom)}^2\d t\bigg)^{2p}\bigg\}^{1/2},
\end{align*}
and the conclusion follows from estimates \eqref{2.hp} and \eqref{2.nablap}. 
\end{proof}

\begin{lemma}[Higher-order moments, without self-diffusion]
Let $a_{i0}>0$ for all $i=1,\ldots,n$, let $u^{(N)}$ be the solution to 
\eqref{2.sde}--\eqref{2.ic}, and let $p\ge 2$. Then for any $i=1,\ldots,n$,
\begin{align}
	\sup_{N\in\N}\E\|u_i^{(N)}\|_{L^2(0,T;W^{1,1}(\dom))}^p 
	&\le C(p,u^0,T), \label{2.W11.no} \\
	\sup_{N\in\N}\E\|u_i^{(N)}\|_{L^{1+2/d}(Q_T)}^p
	&\le C(p,u^0,T), \label{2.L2.no} \\
	\sup_{N\in\N}\E\|u_i^{(N)}\|_{L^{4/d}(0,T;L^2(\dom))}^p 
	&\le C(p,u^0,T) \quad \text{for } d\leq 4,\label{2.4dL2.no} \\
	\sup_{N\in\N}\E\|u_i^{(N)}\|_{L^{\rho_1}(0,T;W^{1,\rho_1}(\dom))}^p
	&\le C(p,u^0,T), \label{2.W1r.no}
\end{align}
where $\rho_1=(d+2)/(d+1)$.
\end{lemma}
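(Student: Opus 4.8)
The plan is to reduce every one of the four bounds to the two quantities already controlled in Lemma \ref{lem.mom}, namely $\E\|u_i^{(N)}\|_{L^\infty(0,T;L^1(\dom))}^p$ in \eqref{2.hp} and $\E\|(u_i^{(N)})^{1/2}\|_{L^2(0,T;H^1(\dom))}^p$ in \eqref{2.nablap} (the latter being available for every $p\ge 2$ precisely because $a_{i0}>0$). For each estimate I would first establish a pathwise inequality, valid for a.e.\ $\omega$, bounding the relevant space-time norm by a product of these two quantities raised to suitable powers; then I would raise that inequality to the power $p$, take the expectation, and split the product with the Cauchy--Schwarz inequality on $\Omega$, so that the right-hand side becomes a product of two expectations of the form $\E\|u_i^{(N)}\|_{L^\infty(0,T;L^1(\dom))}^{cp}$ and $\E\|(u_i^{(N)})^{1/2}\|_{L^2(0,T;H^1(\dom))}^{c'p}$, each bounded uniformly in $N$. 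The substance of the argument is therefore choosing the interpolation exponents correctly, and the constant sublinear identity $\na u_i^{(N)} = 2(u_i^{(N)})^{1/2}\na(u_i^{(N)})^{1/2}$ is the recurring tool.

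For \eqref{2.W11.no} I would combine this identity with the Cauchy--Schwarz inequality in $x$ to get $\|\na u_i^{(N)}\|_{L^1(\dom)} \le 2\|u_i^{(N)}\|_{L^1(\dom)}^{1/2}\|\na(u_i^{(N)})^{1/2}\|_{L^2(\dom)}$, and then integrate the square in time, pulling $\|u_i^{(N)}\|_{L^1(\dom)}$ out in the supremum norm. For \eqref{2.L2.no} I would apply the Gagliardo--Nirenberg inequality to $(u_i^{(N)})^{1/2}$ with the exponent $\theta=d/(d+2)$, which (after using $\|(u_i^{(N)})^{1/2}\|_{L^2(\dom)}^2=\|u_i^{(N)}\|_{L^1(\dom)}$) gives $\int_\dom (u_i^{(N)})^{1+2/d}\,\d x \le C\|(u_i^{(N)})^{1/2}\|_{H^1(\dom)}^2\|u_i^{(N)}\|_{L^1(\dom)}^{2/d}$; time integration then isolates the $L^\infty(0,T;L^1(\dom))$ norm. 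Estimate \eqref{2.4dL2.no} follows from the Gagliardo--Nirenberg argument already used for \eqref{2.4d.no}, now carried to the $p$-th power, with exponent $\theta=d/4$; here the power $4/d-1$ that appears on the $L^1(\dom)$ norm must be nonnegative, which is exactly the stated restriction $d\le 4$ (and equivalently $\theta\le 1$).

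The bound \eqref{2.W1r.no} is the one I expect to be the most delicate, and I would prove it last, using \eqref{2.L2.no} as input. Starting again from $\na u_i^{(N)} = 2(u_i^{(N)})^{1/2}\na(u_i^{(N)})^{1/2}$, I would apply Hölder's inequality in $x$ with conjugate exponents tuned so that the gradient factor lands in $L^2(\dom)$; a short computation shows that the other factor requires precisely $u_i^{(N)}\in L^{1+2/d}(\dom)$, matching \eqref{2.L2.no}. A second Hölder inequality in time, with exponents chosen so that $\|\na(u_i^{(N)})^{1/2}\|_{L^2(\dom)}$ reappears to the power $2$, collapses the estimate to the clean form $\|\na u_i^{(N)}\|_{L^{\rho_1}(Q_T)} \le C\|u_i^{(N)}\|_{L^{1+2/d}(Q_T)}^{1/2}\|\na(u_i^{(N)})^{1/2}\|_{L^2(0,T;L^2(\dom))}$, where the exponent $1/2$ results from the identity $(d+2)/(2(d+1)\rho_1)=1/2$ for $\rho_1=(d+2)/(d+1)$. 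The lower-order $L^{\rho_1}(Q_T)$ part of the $W^{1,\rho_1}$ norm is controlled by \eqref{2.L2.no} through the embedding $L^{1+2/d}(\dom)\hookrightarrow L^{\rho_1}(\dom)$ on the bounded domain. Taking the $p$-th power, the expectation, and the Cauchy--Schwarz inequality on $\Omega$ then reduces \eqref{2.W1r.no} to \eqref{2.L2.no} and \eqref{2.nablap}. The main obstacle throughout is bookkeeping rather than any deep estimate: each step hinges on checking that the interpolation exponents line up so that exactly the two controlled quantities appear, and that the dimensional admissibility of Gagliardo--Nirenberg (and the constraint $d\le 4$ in \eqref{2.4dL2.no}) is respected.
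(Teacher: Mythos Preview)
Your proposal is correct and follows essentially the same route as the paper: each of the four bounds is reduced to \eqref{2.hp} and \eqref{2.nablap} via the identity $\na u_i^{(N)}=2(u_i^{(N)})^{1/2}\na(u_i^{(N)})^{1/2}$, H\"older's inequality, and Gagliardo--Nirenberg with the exact exponents you indicate, and then Cauchy--Schwarz on $\Omega$ is applied after taking the $p$th power. In particular your treatment of \eqref{2.W1r.no} spells out in detail what the paper summarizes in one line (``a consequence of $\na u_i^{(N)}=2(u_i^{(N)})^{1/2}\na(u_i^{(N)})^{1/2}$, the H\"older inequality, and estimates \eqref{2.nablap} and \eqref{2.L2.no}''), and the exponent check $\|\na u_i^{(N)}\|_{L^{\rho_1}(Q_T)}\le C\|u_i^{(N)}\|_{L^{1+2/d}(Q_T)}^{1/2}\|\na(u_i^{(N)})^{1/2}\|_{L^2(Q_T)}$ is exactly right.
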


\begin{proof}
The identity $\na u_i^{(N)}=2(u_i^{(N)})^{1/2}\na(u_i^{(N)})^{1/2}$ and
the H\"older inequality show that
\begin{align*}
  \E\|\na u_i^{(N)}\|_{L^2(0,T;L^1(\dom))}^{p}
	&\le C\E\bigg(\int_0^T\|(u_i^{(N)})^{1/2}\|_{L^2(\dom)}^2
	\|\na (u_i^{(N)})^{1/2}\|_{L^2(\dom)}^2\d t\bigg)^{p/2} \\
  &\le C\E\bigg(\|u_i^{(N)}\|_{L^\infty(0,T;L^1(\dom))}\int_0^T
	\|\na (u_i^{(N)})^{1/2}\|_{L^2(\dom)}^2\d t\bigg)^{p/2} \\
	&\le C\big(\E\|u_i^{(N)}\|_{L^\infty(0,T;L^1(\dom))}^{p}\big)^{1/2}
	\big(\E\|\na (u_i^{(N)})^{1/2}\|_{L^2(0,T;L^2(\dom)}^{2p}\big)^{1/2}.
\end{align*}
Because of \eqref{2.hp} and \eqref{2.nablap}, the right-hand side is bounded.
Using \eqref{2.hp} again, we infer that \eqref{2.W11.no} holds.
Estimate \eqref{2.L2.no} is obtained
from the Gagliardo--Nirenberg inequality with $\theta=d/(d+2)$. Indeed,
taking into account estimates \eqref{2.hp} and \eqref{2.nablap},
\begin{align*}
  \E&\bigg(\int_0^T\|(u_i^{(N)})^{1/2}\|_{L^{2+4/d}(\dom)}^{2+4/d}\bigg)^p \\
	&\le C\E\bigg(\int_0^T\|(u_i^{(N)})^{1/2}\|_{H^1(\dom)}^{\theta(2+4/d)}
	\|(u_i^{(N)})^{1/2}\|_{L^2(\dom)}^{(1-\theta)(2+4/d)}\d t\bigg)^p \\
	&\le C\E\bigg(\|u_i^{(N)}\|_{L^\infty(0,T;L^1(\dom))}^{2/d}\int_0^T
	\|(u_i^{(N)})^{1/2}\|_{H^1(\dom)}^2\d t\bigg)^p \\
  &\le C\Big(\E\|u_i^{(N)}\|_{L^\infty(0,T;L^1(\dom))}^{4p/d}\Big)^{1/2}
	\bigg\{\E\bigg(\int_0^T\|(u_i^{(N)})^{1/2}\|_{H^1(\dom)}^{2}\bigg)^{2p}
	\bigg\}^{1/2} \le C.
\end{align*}
Estimate \eqref{2.4dL2.no} can be shown as in \eqref{2.4d.no}.
Finally, estimate \eqref{2.W1r.no} is a consequence of
$\na u_i^{(N)} = 2(u_i^{(N)})^{1/2}\na(u_i^{(N)})^{1/2}$, the H\"older inequality,
and estimates \eqref{2.nablap} and \eqref{2.L2.no}. 
\end{proof}

\begin{lemma}
Let $u^{(N)}$ be the solution to 
\eqref{2.sde}--\eqref{2.ic} and let $p\ge 2$. Then
for any $i=1,\ldots,n$ and $j\neq i$,
\begin{align}
  \sup_{N\in\N}\E\|(u_i^{(N)}u_j^{(N)})^{1/2}\|_{L^\infty(0,T;L^1(\dom))}^p
	&\le C(p,u^0,T), \label{2.dL1.no} \\
	\sup_{N\in\N}\E\|(u_i^{(N)}u_j^{(N)})^{1/2}\|_{L^2(0,T;H^1(\dom))}^p
	&\le C(p,u^0,T), \label{2.dH1.no} \\
	\sup_{N\in\N}\E\|(u_i^{(N)}u_j^{(N)})^{1/2}\|_{L^{2+2/d}(Q_T)}^p
	&\le C(p,u^0,T), \label{2.dL24.no} \\
	\sup_{N\in\N}\E\|u_i^{(N)}u_j^{(N)}\|_{L^{1+2/d}(0,T;L^1(\dom))}^p
	&\le C(p,u^0,T), \label{2.dL11.no} \\
	\sup_{N\in\N}\E\|\na(u_i^{(N)}u_j^{(N)})\|_{L^{\rho_1}(0,T;L^1(\dom))}^p
	&\le C(p,u^0,T), \label{2.dW21.no} \\
	\sup_{N\in\N}\E\|\na(u_i^{(N)}u_j^{(N)})\|_{L^{\rho_2}(Q_T)}^p
	&\le C(p,u^0,T), \label{2.dnabla.no}
\end{align}
where $\rho_1=(d+2)/(d+1)$ and $\rho_2=(2d+2)/(2d+1)$.
\end{lemma}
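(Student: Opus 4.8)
The plan is to observe that the product square root $v_{ij}:=(u_i^{(N)}u_j^{(N)})^{1/2}$ enjoys, uniformly in $N$ and in every moment, exactly the same pair of a priori bounds as the single-species square root $(u_i^{(N)})^{1/2}$ does in the without-self-diffusion case: an $L^\infty(0,T;L^1(\dom))$ bound and an $L^2(0,T;H^1(\dom))$ bound. Once these two base estimates, \eqref{2.dL1.no} and \eqref{2.dH1.no}, are in hand, the remaining four estimates follow by repeating the Gagliardo--Nirenberg and H\"older interpolation arguments of the two preceding lemmas verbatim, replacing $u_i^{(N)}$ by $u_i^{(N)}u_j^{(N)}$ and $(u_i^{(N)})^{1/2}$ by $v_{ij}$.

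First I would establish \eqref{2.dL1.no}. By the Cauchy--Schwarz inequality, $\int_\dom v_{ij}\,\d x\le(\int_\dom u_i^{(N)}\,\d x)^{1/2}(\int_\dom u_j^{(N)}\,\d x)^{1/2}\le\tfrac12(\|u_i^{(N)}\|_{L^1(\dom)}+\|u_j^{(N)}\|_{L^1(\dom)})$ pointwise in $(\omega,t)$; taking the supremum over $t\in(0,T)$, raising to the power $p$, and taking the expectation, \eqref{2.dL1.no} is immediate from \eqref{2.hp}. For \eqref{2.dH1.no} the new ingredient is a moment bound on $\na v_{ij}$. This is obtained by rerunning the proof of Lemma \ref{lem.mom}: the cross-diffusion entropy-production term $2\pi_i a_{ij}|\na v_{ij}|^2$ already sits on the left-hand side of \eqref{2.aux3}, so raising that inequality to the power $p$, discarding the other nonnegative terms on the left, and proceeding as before yields $\E\|\na v_{ij}\|_{L^2(0,T;L^2(\dom))}^p\le C$ for every $p\ge2$ and every pair with $a_{ij}>0$ (the left-hand side controls the $2p$-th power, and $p\ge2$ is arbitrary, with lower moments following by Jensen's inequality on $\Omega$). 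Combining this with \eqref{2.dL1.no} through the Poincar\'e--Wirtinger inequality, which controls $\|v_{ij}\|_{L^2(\dom)}$ by $\|\na v_{ij}\|_{L^2(\dom)}+\|v_{ij}\|_{L^1(\dom)}$, gives the full $L^2(0,T;H^1(\dom))$ moment bound \eqref{2.dH1.no}.

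The remaining estimates are pure interpolation from the two base bounds. Estimate \eqref{2.dL24.no} follows from the Gagliardo--Nirenberg inequality applied to $v_{ij}\in L^\infty(0,T;L^1)\cap L^2(0,T;H^1)$ exactly as \eqref{2.Ld}, the relevant interpolation exponent being $2+2/d$. For \eqref{2.dL11.no} I would use the identity $\|u_i^{(N)}u_j^{(N)}\|_{L^{1+2/d}(0,T;L^1(\dom))}=\|v_{ij}\|_{L^{2+4/d}(0,T;L^2(\dom))}^2$ and bound the right-hand side by the Gagliardo--Nirenberg inequality with $\theta=d/(d+2)$, as in \eqref{2.4d.no}, which here requires only $d\le 4$. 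For the two gradient estimates I would write $\na(u_i^{(N)}u_j^{(N)})=2v_{ij}\na v_{ij}$: estimate \eqref{2.dW21.no} then follows from the Cauchy--Schwarz inequality in space ($L^2\cdot L^2\hookrightarrow L^1$) together with H\"older's inequality in time pairing $v_{ij}\in L^{2+4/d}(0,T;L^2)$ with $\na v_{ij}\in L^2(0,T;L^2)$, which produces the time exponent $\rho_1=(d+2)/(d+1)$ since $(2+4/d)^{-1}+2^{-1}=\rho_1^{-1}$; estimate \eqref{2.dnabla.no} follows the same way, now using H\"older in space with exponents $2+2/d$ and $2$ (a short check gives $(2+2/d)^{-1}+2^{-1}=\rho_2^{-1}$) and again in time, feeding in \eqref{2.dL24.no} and the gradient bound to obtain the space-time exponent $\rho_2=(2d+2)/(2d+1)$.

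The only genuinely new point, and hence the main obstacle, is the extraction of the $p$-th moment bound on $\na v_{ij}$: it is available precisely because the detailed-balance symmetrization carried out in Lemma \ref{lem.ei} converts the indefinite cross terms into the nonnegative square $2\pi_i a_{ij}|\na v_{ij}|^2$, and it requires $a_{ij}>0$ (pairs with $a_{ij}=0$ do not enter the system, so nothing is lost). All subsequent steps are bookkeeping with H\"older and Gagliardo--Nirenberg exponents; the standing restriction $d\le 3$ (a fortiori $d\le 4$) guarantees that the exponents entering the intermediate $L^{2+4/d}(0,T;L^2(\dom))$ bound are admissible.
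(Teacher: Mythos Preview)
Your proposal is correct and follows essentially the same route as the paper: Cauchy--Schwarz (H\"older) plus \eqref{2.hp} for \eqref{2.dL1.no}; the entropy-production term in \eqref{2.aux3} raised to the $p$-th power for the gradient moment bound, combined with Poincar\'e--Wirtinger for \eqref{2.dH1.no}; Gagliardo--Nirenberg for \eqref{2.dL24.no} and \eqref{2.dL11.no}; and the chain-rule identity $\na(u_i^{(N)}u_j^{(N)})=2v_{ij}\na v_{ij}$ plus H\"older for \eqref{2.dW21.no} and \eqref{2.dnabla.no}. If anything, you are slightly more careful than the paper in making explicit that the $p$-th moment bound on $\na v_{ij}$ must be read off the proof of Lemma~\ref{lem.mom} rather than from \eqref{2.sqrt2} alone.
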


\begin{proof} 
The H\"older inequality and estimate \eqref{2.hp} yield immediately \eqref{2.dL1.no}.
By the Poincar\'e--Wirtinger inequality, estimates \eqref{2.sqrt2} and 
\eqref{2.dL1.no} lead to \eqref{2.dH1.no}. 
Estimates \eqref{2.dL24.no} and \eqref{2.dL11.no}
follow from the Gagliardo--Nirenberg inequality, taking into account 
estimates \eqref{2.dL1.no} and \eqref{2.dH1.no} (see \eqref{2.4d.no} for 
a similar proof). Finally, estimates \eqref{2.dL1.no} and \eqref{2.dH1.no}
imply that
$$
  \na(u_i^{(N)}u_j^{(N)}) = 2(u_i^{(N)}u_j^{(N)})^{1/2}\na(u_i^{(N)}u_j^{(N)})^{1/2}
$$
is bounded in $L^{\rho_1}(0,T;L^1(\dom))$ and in $L^{\rho_2}(Q_T)$,
verifying \eqref{2.dW21.no} and \eqref{2.dnabla.no}.
\end{proof}

\subsection{Fractional time regularity}

We show that the solution $u^{(N)}$ to \eqref{2.sde}--\eqref{2.ic} possesses
a uniform bound for a fractional time derivative. This result is used to
establish the tightness of the laws of $u^{(N)}$ in some Lebesgue spaces.
In our previous work \cite{DHJKN20}, 
the tightness of the laws of the approximate solutions was
proved in a different way by verifying the Aldous condition.
We recall the definition of the Sobolev--Slobodeckij spaces. Let $X$ be a vector
space and let $p\ge 1$, $\alpha\in(0,1)$. 
Then $W^{\alpha,p}(0,T;X)$ is the set of
all functions $v\in L^p(0,T;X)$ for which 
$$
  \|v\|_{W^{\alpha,p}(0,T;X)}^p = \|v\|_{L^p(0,T;X)}^p
	+ \int_0^T\int_0^T\frac{\|v(t)-v(s)\|_X^p}{|t-s|^{1+\alpha p}}\d t\d s
$$
is finite. With this norm, $W^{\alpha,p}(0,T;X)$ becomes a Banach space.
In the case without self-diffusion, we assume that $d\le 3$. 

\begin{lemma}[Time regularity]
Let $u^{(N)}$ be the solution to \eqref{2.sde}--\eqref{2.ic} and let 
$m\in\N$ satisfy $m>d/2+1$. 
Then there exists $C(u^0,T)>0$ such that
\begin{equation}\label{2.time}
  \sup_{N\in\N}\E\|u^{(N)}\|_{W^{\alpha,2}(0,T;H^{m}(\dom)')}^2 \le C(u^0,T),
\end{equation}
where $\alpha<1/2$ (with self-diffusion) and $\alpha<1/(d+2)$ (without
self-diffusion).
\end{lemma}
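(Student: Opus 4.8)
The plan is to write the Galerkin increment as the sum of a drift part and a martingale part,
$$
  u_i^{(N)}(t)-u_i^{(N)}(s) = \int_s^t\Pi_N\Delta g_i^{(N)}(r)\,\d r
  + \sum_{j=1}^n\int_s^t\Pi_N\sigma_{ij}(u^{(N)}(r))\,\d W_j(r)
  =: \big(D_i(t)-D_i(s)\big)+\big(M_i(t)-M_i(s)\big),
$$
with $g_i^{(N)}=a_{i0}u_i^{(N)}+\sum_{j=1}^n a_{ij}u_i^{(N)}u_j^{(N)}$, and to estimate the Slobodeckij seminorm of each part separately in $H^m(\dom)'$. Throughout I use that $m>d/2+1$ yields $H^m(\dom)\hookrightarrow W^{1,\infty}(\dom)$ and that $\Pi_N$ is a contraction on $L^2(\dom)$, so that all constants stay independent of $N$.

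The exponent $1/2$ originates entirely from the martingale part. For $s<t$, It\^o's isometry gives $\E\|M_i(t)-M_i(s)\|_{L^2(\dom)}^2=\E\int_s^t\|\sigma(u^{(N)}(r))\|_{\L_2(\R^n;L^2(\dom))}^2\,\d r$. Inserting this into the seminorm and integrating the inner time variable first (Tonelli) replaces the kernel $|t-s|^{-1-2\alpha}$ by $\int_0^r\int_r^T(t-s)^{-1-2\alpha}\,\d t\,\d s$, which is finite precisely when $\alpha<1/2$. Hence
$$
  \E\|M_i\|_{W^{\alpha,2}(0,T;L^2(\dom))}^2
  \le C(T,\alpha)\,\E\int_0^T\|\sigma(u^{(N)})\|_{\L_2(\R^n;L^2(\dom))}^2\,\d r,
$$
and the right-hand side is finite by the sublinear bound in (A4) together with \eqref{2.L3} (with self-diffusion, where $2\gamma\le 2\le 2+4/d$) or \eqref{2.4dL2.no} (without, where $2\gamma\le 4/d$). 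Since $L^2(\dom)\hookrightarrow H^m(\dom)'$, the martingale part lies in $W^{\alpha,2}(0,T;H^m(\dom)')$ for every $\alpha<1/2$, in both cases.

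For the drift I pair with $\phi\in H^m(\dom)$ and integrate by parts; the Neumann eigenfunctions building $\Pi_N\phi$ produce no boundary term, so $\|\Pi_N\Delta g_i^{(N)}\|_{H^m(\dom)'}\le C\|\na g_i^{(N)}\|_{L^1(\dom)}$ by the $W^{1,\infty}$-embedding, whence
$$
  \|D_i(t)-D_i(s)\|_{H^m(\dom)'}
  \le C\int_s^t\Big(a_{i0}\|\na u_i^{(N)}\|_{L^1(\dom)}
  +\sum_{j\neq i}a_{ij}\|\na(u_i^{(N)}u_j^{(N)})\|_{L^1(\dom)}\Big)\d r.
$$
The fractional exponent is then governed by the time-integrability of this integrand. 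Without self-diffusion the entropy production only controls $\na(u_i^{(N)}u_j^{(N)})$ in $L^{\rho_1}(0,T;L^1(\dom))$ with $\rho_1=(d+2)/(d+1)$ by \eqref{2.dW21.no}, and a H\"older estimate in time gives $\E\|D_i(t)-D_i(s)\|_{H^m(\dom)'}^2\le C|t-s|^{2/(d+2)}$; combined with the elementary fact that $\E\|v(t)-v(s)\|_X^2\le C|t-s|^{2\beta}$ implies $v\in W^{\alpha,2}(0,T;X)$ for all $\alpha<\beta$, this yields $\alpha<1/(d+2)$. With self-diffusion the linear term additionally satisfies $\na u_i^{(N)}\in L^2(Q_T)$ by \eqref{2.nablap}, so that $D_i\in W^{1,\rho_1}(0,T;H^m(\dom)')$; invoking the fractional-in-time embedding $W^{1,\rho_1}(0,T;X)\hookrightarrow W^{\alpha,2}(0,T;X)$, valid for $\alpha<1/2+1/(d+2)$, shows that the drift does not lower the exponent below $1/2$. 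Taking the smaller of the two contributions and using the uniformity in $N$ of all moment bounds gives \eqref{2.time} with $\alpha<1/2$ (with self-diffusion) and $\alpha<1/(d+2)$ (without).

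I expect the cross-diffusion term in the drift to be the main obstacle in the case without self-diffusion: since only $\na(u_i^{(N)}u_j^{(N)})^{1/2}$ is controlled in $L^2$, the product gradient $\na(u_i^{(N)}u_j^{(N)})$ has merely $L^{\rho_1}$ time-integrability, and it is exactly this deficit that caps the exponent at $1/(d+2)$; by contrast, the martingale part and the entire analysis with self-diffusion comfortably reach $\alpha<1/2$.
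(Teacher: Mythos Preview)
Your overall strategy---split into drift $D_i$ and martingale $M_i$, bound the martingale by It\^o isometry to get $\alpha<1/2$, and let the time-integrability of $\na g_i^{(N)}$ in $L^1(\dom)$ dictate the fractional exponent for the drift---is correct and matches the paper's. The difference is in packaging: the paper works directly with the double Slobodeckij integral and an elementary lemma (the bound $\int_0^T\int_0^T|t-s|^{-\delta}\int_{s\wedge t}^{t\lor s}g(r)\,\d r\,\d t\,\d s<\infty$ for $\delta<2$), whereas you use the increment criterion $\E\|v(t)-v(s)\|^2\le C|t-s|^{2\beta}\Rightarrow v\in W^{\alpha,2}$ and, in the self-diffusion case, the Sobolev embedding $W^{1,\rho_1}(0,T;X)\hookrightarrow W^{\alpha,2}(0,T;X)$ for $\alpha<1/2+1/(d+2)$. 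Both routes give the same exponents; yours is slightly more abstract, the paper's slightly more hands-on.

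There is one genuine omission in your self-diffusion argument. Your displayed drift bound reads $\sum_{j\neq i}a_{ij}\|\na(u_i^{(N)}u_j^{(N)})\|_{L^1(\dom)}$, so the self-diffusion contribution $a_{ii}\na(u_i^{(N)})^2=2a_{ii}u_i^{(N)}\na u_i^{(N)}$ is missing, and your subsequent sentence about ``the linear term'' does not address it either. To conclude $D_i\in W^{1,\rho_1}(0,T;H^m(\dom)')$ you need $u_i^{(N)}\na u_i^{(N)}\in L^{\rho_1}(0,T;L^1(\dom))$ with second moment, which follows from H\"older with $u_i^{(N)}\in L^{(2d+4)/d}(0,T;L^2(\dom))$ (estimate \eqref{2.L3}) and $\na u_i^{(N)}\in L^2(Q_T)$ (estimate \eqref{2.nablap}); the exponents match exactly since $d/(2d+4)+1/2=(d+1)/(d+2)=1/\rho_1$. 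The paper handles this same term more directly by writing $\|\sum_j A_{ij}(u^{(N)})\na u_j^{(N)}\|_{L^1(\dom)}\le C\|\na u^{(N)}\|_{L^2(\dom)}+C\|u^{(N)}\|_{L^2(\dom)}\|\na u^{(N)}\|_{L^2(\dom)}$ and treating the two pieces separately in the double integral. Once you plug this in, your argument is complete and equivalent to the paper's.
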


The time regularity of $u^{(N)}$ is restricted by the Sobolev regularity
of the stochastic integral; see, e.g., \cite[Lemma 2.1]{FlGa95}.

\begin{proof}
Estimate \eqref{2.L1} and the continuous embedding $L^1(\dom)\hookrightarrow
H^{m}(\dom)'$ show that the sequence $(\E\|u_i^{(N)}\|_{L^2(0,T;H^{m}(\dom)')}^2)$
is uniformly bounded. It remains to show that the following integral is finite:
\begin{align*}
  \E&\int_0^T\int_0^T\frac{\|u_i^{(N)}(t)-u_i^{(N)}(s)\|_{H^{m}(\dom)'}^2}{
	|t-s|^{1+2\alpha}}\d t\d s \\
  &\le \int_0^T\int_0^T|t-s|^{-1-2\alpha}\E\bigg\|\int_{s\wedge t}^{t\lor s}
	\diver\sum_{j=1}^n A_{ij}(u^{(N)}(r))\na u_j^{(N)}(r)\d r\bigg\|_{H^{m}(\dom)'}^2
	\d t\d s \\
	&\phantom{xx}{}+ \int_0^T\int_0^T|t-s|^{-1-2\alpha}\E\bigg\|\int_{s\wedge t}^{t\lor s}
	\sum_{j=1}^n\sigma_{ij}(u^{(N)}(r))\d W_j(r)\bigg\|_{H^{m}(\dom)'}^2\d t\d s \\
	&=: J_1 + J_2.
\end{align*}
Before we estimate $J_1$ and $J_2$, we recall a well-known result for
the sake of completeness.
Let $g\in L^1(0,T)$ and $\delta<2$, $\delta\neq 1$. We claim that
\begin{equation}\label{2.int}
  I := \int_0^T\int_0^T|t-s|^{-\delta}\int_{s\wedge t}^{t\lor s}g(r)\d r\d t\d s 
	< \infty.
\end{equation}
Indeed, a change of the integration domain and integration by parts gives
\begin{align}
  I &= 2\int_0^T\int_s^T(t-s)^{-\delta}\bigg(\int_s^t g(r)dr\bigg)\d t\d s \nonumber \\
  &= -\frac{2}{1-\delta}\int_0^T\int_s^T(t-s)^{1-\delta}g(t)\d t\d s
	+ \frac{2}{1-\delta}\int_0^T(T-s)^{1-\delta}\int_s^t g(r)\d r\d s, \label{2.I}
\end{align}
observing that $\lim_{t\to s}(t-s)^{1-\delta}\int_s^t g(r)\d r=0$ 
for $1-\delta>-1$, since the integrability of $g$ implies that 
$\lim_{t\to s}(t-s)^{-1}\int_s^t g(r)\d r=g(s)$ for a.e.\ $s$. The claim follows
as the integrals on the right-hand side of \eqref{2.I} are finite.

{\em Step 1: Case with self-diffusion.}
Let $a_{ii}>0$ for all $i=1,\ldots,n$. 
We need some preparations before we estimate $J_1$. We observe that
\begin{align*}
  \bigg\|\sum_{j=1}^n A_{ij}(u^{(N)})\na u_j^{(N)}\bigg\|_{L^1(\dom)}
	&= \bigg\|\bigg(a_{i0} + 2\sum_{j=1}^n a_{ij}u_j^{(N)}\bigg)\na u_i^{(N)}
	+ \sum_{j\neq i}a_{ij}u_i^{(N)}\na u_j^{(N)}\bigg\|_{L^1(\dom)} \\
	&\le C\|\na u^{(N)}\|_{L^2(\dom)} 
	+ C\|u^{(N)}\|_{L^2(\dom)}\|\na u^{(N)}\|_{L^2(\dom)}.
\end{align*}
Because of the embedding $L^1(\dom)\hookrightarrow H^{m-1}(\dom)'$, it follows that
\begin{align*}
  J_{3} &:= \E\bigg\|\int_{s\wedge t}^{t\lor s}
	\diver\sum_{j=1}^n A_{ij}(u^{(N)}(r))\na u_j^{(N)}(r)\d r\bigg\|_{H^{m}(\dom)'}^2 \\
	&\le C\E\bigg(\int_{s\wedge t}^{t\lor s}
	\bigg\|\sum_{j=1}^n A_{ij}(u^{(N)}(r))\na u_j^{(N)}(r)\bigg\|_{L^1(\dom)}\d r
	\bigg)^2 \\
	&\le C\E\bigg(\int_{s\wedge t}^{t\lor s}\|\na u^{(N)}(r)\|_{L^2(\dom)}\d r\bigg)^2
	+ C\E\bigg(\int_{s\wedge t}^{t\lor s}\|u^{(N)}(r)\|_{L^2(\dom)}
	\|\na u^{(N)}(r)\|_{L^2(\dom)}\d r\bigg)^2 \\
	&=: J_{31} + J_{32}.
\end{align*}
We use the H\"older inequality to obtain
\begin{align*}
  J_{31} &\le C|t-s|\E\int_{s\wedge t}^{t\lor s}
	\|\na u^{(N)}(r)\|_{L^2(\dom)}^2\d r,\\
	J_{32} &\le C\E\bigg\{\bigg(\int_{s\wedge t}^{t\lor s}
	\|u^{(N)}(r)\|^{(2d+4)/d}_{L^2(\dom)}\d r\bigg)^{d/(2d+4)} \\
	&\phantom{xx}{}\times
	\bigg(\int_{s\wedge t}^{t\lor s}\|\na u^{(N)}(r)\|_{L^2(\dom)}^2\d r\bigg)^{1/2}
	|t-s|^{2/(2d+4)}\bigg\}^2 \\
  &= C\E\bigg\{\bigg(\int_{s\wedge t}^{t\lor s}
	\|u^{(N)}(r)\|_{L^2(\dom)}^{(2d+4)/d}\d r\bigg)^{d/(d+2)} 
	\bigg(\int_{s\wedge t}^{t\lor s}\|\na u^{(N)}(r)\|^2_{L^2(\dom)}\d r\bigg)
	|t-s|^{2/(d+2)}\bigg\}.
\end{align*}
This shows that
\begin{align*}
  \int_0^T&\int_0^T|t-s|^{-1-2\alpha}J_{31}\d t\d s
  \le \int_0^T\int_0^T|t-s|^{-2\alpha}\bigg(\E\int_0^T\|\na u^{(N)}(r)\|_{L^2(\dom)}^2
	\d r\bigg)\d t\d s \le C, \\
  \int_0^T&\int_0^T|t-s|^{-1-2\alpha}J_{32}\d t\d s
	\le \int_0^T\int_0^T|t-s|^{-1-2\alpha+2/(d+2)} \\
	&\phantom{xx}\times
	\E\bigg\{\bigg(\int_0^T\|u^{(N)}(r)\|_{L^2(\dom)}^{(2d+4)/d}\d r\bigg)^{d/(d+2)} 
	\bigg(\int_{s\wedge t}^{t\lor s}\|\na u^{(N)}(r)\|^2_{L^2(\dom)}\d r\bigg)\bigg\}
	\d t\d s \\
	&\le \bigg\{\E\bigg(\int_0^T\|u^{(N)}(r)\|_{L^2(\dom)}^{(2d+4)/d}\d r\bigg)^{2d/(d+2)}
	\bigg\}^{1/2} \\
	&\phantom{xx}\times\bigg\{\E\bigg(\int_0^T\int_0^T|t-s|^{-1-2\alpha+2/(d+2)}
	\int_{s\wedge t}^{t\lor s}\|\na u^{(N)}(r)\|^2_{L^2(\dom)}\d r
	\d t\d s\bigg)^2\bigg\}^{1/2} \le C,
\end{align*}
where we used \eqref{2.int} in the last step, requiring that
$1+2\alpha-2/(d+2)<2$ or $\alpha<(d+4)/(2d+4)$. Consequently,
$$
  J_1 \le \int_0^T\int_0^T|t-s|^{-1-2\alpha}(J_{31}+J_{32})\d t\d s \le C.
$$

To estimate $J_2$, we use the embedding $L^2(\dom)\hookrightarrow H^{m}(\dom)'$,
the It\^o isometry, the linear growth of $\sigma$, and the H\"older inequality:
\begin{align*}
  J_2 &\le C\int_0^T\int_0^T|t-s|^{-1-2\alpha}\E\bigg\|\int_{s\wedge t}^{t\lor s}
	\sum_{j=1}^n\sigma_{ij}(u^{(N)}(r))\d W_j(r)\bigg\|_{L^2(\dom)}^2\d t\d s \\
	&= C\int_0^T\int_0^T|t-s|^{-1-2\alpha}\E\bigg(\int_{s\wedge t}^{t\lor s}
  \sum_{j=1}^n\|\sigma_{ij}(u^{(N)}(r))\|_{L^2(\dom)}^2\d r\bigg)\d t\d s \\
	&\le C\int_0^T\int_0^T|t-s|^{-1-2\alpha}\int_{s\wedge t}^{t\lor s}
	\E\sum_{j=1}^n\big(1+\|u_j^{(N)}(r)\|_{L^2(\dom)}^2\big)\d r\d t\d s.
\end{align*}
By \eqref{2.int} and estimate \eqref{2.L3}, 
the right-hand side is finite if $-1-2\alpha>-2$ or $\alpha<1/2$.

{\em Step 2: Case without self-diffusion.}
Let $a_{i0}>0$ and $a_{ii}=0$ for all $i=1,\ldots,n$. 
We estimate the (normally) elliptic operator in $J_1$ 
by exploiting the special structure
of the diffusion term and observing that the embedding $L^1(\dom)\hookrightarrow
H^{m-1}(\dom)'$ is continuous:
\begin{align*}
  \bigg\|\diver\sum_{j=1}^n A_{ij}(u^{(N)})\na u_j^{(N)}\bigg\|_{H^m(\dom)'}
	&= \bigg\|\na\bigg(a_{i0}u_i^{(N)} + \sum_{j\neq i}a_{ij}u_i^{(N)}u_j^{(N)}
	\bigg)\bigg\|_{H^{m-1}(\dom)'} \\
	&\le a_{i0}\|\na u_i^{(N)}\|_{L^1(\dom)} + \sum_{j\neq i}a_{ij}
	\|\na(u_i^{(N)}u_j^{(N)})\|_{L^1(\dom)}.
\end{align*}
This yields, using the definition of $J_3$ from the previous case 
and the H\"older inequality,
\begin{align*}
  J_3 &\le 
	C\E\bigg(\int_{s\wedge t}^{t\lor s}\|\na u_i^{(N)}(r)\|_{L^1(\dom)}\d r\bigg)^2
	+ C\sum_{j\neq i}\E\bigg(\int_{s\wedge t}^{t\lor s}
	\|\na(u_i^{(N)}u_j^{(N)})(r)\|_{L^1(\dom)}\d r\bigg)^2 \\
	&\le C|t-s|\E\int_{s\wedge t}^{t\lor s}\|\na u_i^{(N)}(r)\|_{L^1(\dom)}^2\d r \\
	&\phantom{xx}{}+ C|t-s|^{2/(d+2)}\bigg(\sum_{j\neq i}\E\int_{s\wedge t}^{t\lor s}
	\|\na(u_i^{(N)}u_j^{(N)})(r)\|_{L^1(\dom)}^{\rho_1}\d r\bigg)^{2(d+1)/(d+2)}.
\end{align*}
It follows from \eqref{2.W11.no} and \eqref{2.dW21.no} that
$$
  J_1 \le C\int_0^T\int_0^T\frac{|t-s|^{2/(d+2)}}{|t-s|^{1+2\alpha}}\d s\d t,
$$
and this integral is finite if $2/(d+2)-1-2\alpha>-1$ or
$\alpha<1/(d+2)$.

To estimate $J_2$, we use, similarly as in the case with self-diffusion, 
the embedding $L^2(\dom)\hookrightarrow H^{m}(\dom)'$,
the It\^o isometry, the sublinear growth of $\sigma$, and the H\"older inequality:
\begin{align*}
  J_2 &= C\int_0^T\int_0^T|t-s|^{-1-2\alpha}\E\bigg(\int_{s\wedge t}^{t\lor s}
  \sum_{j=1}^n\|\sigma_{ij}(u^{(N)}(r))\|_{L^2(\dom)}^2\d r\bigg)\d t\d s \\
	&\le C\int_0^T\int_0^T|t-s|^{-1-2\alpha}\int_{s\wedge t}^{t\lor s}
	\E\sum_{j=1}^n\big(1+\|u_j^{(N)}(r)\|_{L^2(\dom)}^{2\gamma}\big)\d r\d t\d s.
\end{align*}
By \eqref{2.int} and estimate \eqref{2.4dL2.no}, the right-hand side is finite if
$2\gamma\le 4/d$ and $-1-2\alpha>-2$ or $\alpha<1/2$.
This finishes the proof.
\end{proof}


\section{Proof of Theorem \ref{thm.ex}}\label{sec.ex}

We prove the existence of a martingale solution in the case with self-diffusion.

\subsection{Tightness of the laws of $(u^{(N)})$}

We show that the laws of $u^{(N)}$ are tight in a certain sub-Polish space.
For this, we introduce the following spaces, recalling that $m>d/2+1$:
\begin{itemize}
\item $C^0([0,T];H^m(\dom)')$ is the space of continuous functions
$u:[0,T]\to H^m(\dom)'$ with the topology $\mathbb{T}_1$ induced by the norm
$\|u\|_{C^0([0,T];H^m(\dom)')}=\sup_{t\in(0,T)}\|u(t)\|_{H^m(\dom)'}$;
\item $L^2_w(0,T;H^1(\dom))$ is the space $L^2(0,T;H^1(\dom))$ with the weak
topology $\mathbb{T}_2$;
\end{itemize}
We define the space
$$
  \widetilde{Z}_T:=C^0([0,T];H^m(\dom)')\cap L_w^2(0,T;H^1(\dom)),
$$
endowed with the topology $\widetilde{\mathbb{T}}$ that is the maximum of the topologies
of $C^0([0,T];H^m(\dom)')$ and $L_w^2(0,T;H^1(\dom))$. Similar to the proof of 
\cite[Lemma 12]{DJZ19}, it can be shown that $\widetilde{Z}_T$ is a sub-Polish space.

\begin{lemma} \label{lem.tight}
The set of laws $({\mathcal L}(u^{(N)}))_{N\in\N}$ is tight in 
$\widetilde{Z}_T$ and in $L^2(0,T;L^2(\dom))$.
\end{lemma}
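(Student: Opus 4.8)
\emph{Overall strategy.} The plan is to produce, for every $\eps>0$, sublevel sets of the a priori norms that are compact in the respective target space and that carry mass at least $1-\eps$ uniformly in $N$; the uniform-in-$N$ estimates of the previous subsections then close the argument through Chebyshev's inequality. Throughout I use the embeddings $H^1(\dom)\hookrightarrow\hookrightarrow L^2(\dom)$ (Rellich), $L^2(\dom)\hookrightarrow\hookrightarrow H^m(\dom)'$ (adjoint of $H^m(\dom)\hookrightarrow\hookrightarrow L^2(\dom)$), and $L^1(\dom)\hookrightarrow\hookrightarrow H^m(\dom)'$, the latter being valid since $m>d/2$ forces $H^m(\dom)\hookrightarrow\hookrightarrow C^0(\overline{\dom})$ and hence a compact inclusion of $L^1(\dom)$ into the dual.

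\emph{Tightness in $L^2(0,T;L^2(\dom))$.} Here I would invoke a fractional Aubin--Lions--Simon lemma: since $H^1(\dom)\hookrightarrow\hookrightarrow L^2(\dom)\hookrightarrow H^m(\dom)'$, the set
\[
  \mathcal K_R=\big\{v:\|v\|_{L^2(0,T;H^1(\dom))}\le R,\
  \|v\|_{W^{\alpha,2}(0,T;H^m(\dom)')}\le R\big\}
\]
is relatively compact in $L^2(0,T;L^2(\dom))$ for each fixed $R>0$ and $\alpha\in(0,1)$. The bounds \eqref{2.H1} and \eqref{2.time} give $\sup_N\E(\|u^{(N)}\|_{L^2(0,T;H^1(\dom))}^2+\|u^{(N)}\|_{W^{\alpha,2}(0,T;H^m(\dom)')}^2)\le C$, so Chebyshev's inequality yields $\Prob(u^{(N)}\notin\mathcal K_R)\le C/R^2$, and choosing $R=R_\eps$ large proves tightness in $L^2(0,T;L^2(\dom))$.

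\emph{Tightness in $\widetilde Z_T$.} I would treat the two component topologies separately. The $L^2_w(0,T;H^1(\dom))$ part is immediate: balls of $L^2(0,T;H^1(\dom))$ are weakly compact by reflexivity, and \eqref{2.H1} with Chebyshev controls the mass outside them. For the $C^0([0,T];H^m(\dom)')$ part I would split $u_i^{(N)}(t)=u_i^{(N)}(0)+Y_i^{(N)}(t)+Z_i^{(N)}(t)$ into its drift and stochastic pieces. The drift $Y_i^{(N)}$ is bounded in $W^{1,\rho_1}(0,T;H^m(\dom)')$ with $\rho_1=(d+2)/(d+1)>1$: using $\|\diver(\sum_j A_{ij}\na u_j^{(N)})\|_{H^m(\dom)'}\le C\|\sum_j A_{ij}\na u_j^{(N)}\|_{L^1(\dom)}\le C(\|\na u^{(N)}\|_{L^2(\dom)}+\|u^{(N)}\|_{L^2(\dom)}\|\na u^{(N)}\|_{L^2(\dom)})$ together with \eqref{2.H1} and \eqref{2.L3}, exactly as in the estimate of $J_1$; hence $Y_i^{(N)}$ is uniformly H\"older in time into $H^m(\dom)'$. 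The stochastic part $Z_i^{(N)}(t)=\sum_j\int_0^t\Pi_N\sigma_{ij}(u^{(N)})\,\d W_j$ is controlled in $C^{0,\beta}([0,T];L^2(\dom))$ for some $\beta\in(0,1/2)$ by the Burkholder--Davis--Gundy and Kolmogorov continuity theorems, using the higher moments $\E(\int_0^T\|\sigma(u^{(N)})\|_{\L_2(\R^n;L^2(\dom))}^2\d s)^{p/2}\le C$ (for arbitrary $p$) coming from the sublinear growth of $\sigma$ and from \eqref{2.L3}, \eqref{2.hp}. Since $L^1(\dom),L^2(\dom)\hookrightarrow\hookrightarrow H^m(\dom)'$, the Arzel\`a--Ascoli theorem shows that the corresponding H\"older sublevel sets, together with the $L^\infty(0,T;L^1(\dom))$ bound \eqref{2.hp} (which gives pointwise precompactness of $\{u^{(N)}(t)\}$ in $H^m(\dom)'$), are relatively compact in $C^0([0,T];H^m(\dom)')$; Chebyshev transfers this to tightness.

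\emph{Main obstacle and conclusion.} The delicate point is precisely the $C^0([0,T];H^m(\dom)')$ compactness: the integrated fractional bound \eqref{2.time} with $\alpha<1/2$ satisfies $2\alpha<1$ and therefore does not embed into the space of time-continuous functions, so it cannot be used as a black box. The decomposition into drift and stochastic integral is essential---the equicontinuity in $H^m(\dom)'$ must be extracted pathwise from the $W^{1,\rho_1}$-regularity of the drift and from the Kolmogorov regularity of the stochastic convolution, and it is here that the higher-order moment estimates (rather than the mere $L^2(\widetilde\Omega)$ bounds) are needed to make the H\"older exponent of the stochastic term strictly positive and to close the Chebyshev step uniformly in $N$. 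Finally, since $\widetilde Z_T$ is sub-Polish, the existence of the compact sets constructed above is exactly the tightness required for the Skorokhod--Jakubowski theorem, which completes the proof.
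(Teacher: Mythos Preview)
Your argument for tightness in $L^2(0,T;L^2(\dom))$ is exactly the paper's: fractional Aubin--Lions--Simon (Simon's Corollary~5) applied to $W^{\alpha,2}(0,T;H^m(\dom)')\cap L^2(0,T;H^1(\dom))$, followed by Chebyshev with the bounds \eqref{2.H1} and \eqref{2.time}.

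For tightness in $\widetilde Z_T$, your route is different. The paper invokes a packaged tightness criterion (Brze\'zniak--Motyl, Corollary~2.6) whose three hypotheses are the $L^\infty(0,T;L^1(\dom))$ bound \eqref{2.L1}, the $L^2(0,T;H^1(\dom))$ bound \eqref{2.H1}, and the Aldous condition in $H^m(\dom)'$---the latter being deferred to an earlier paper. You instead work by hand: split $u^{(N)}$ into initial datum, drift, and stochastic integral; get $C^{0,1-1/\rho_1}$ time regularity of the drift in $H^m(\dom)'$ from the $W^{1,\rho_1}$ bound (this is correct, the product $\|u^{(N)}\|_{L^2}\|\na u^{(N)}\|_{L^2}$ lands in $L^{\rho_1}(0,T)$ precisely by H\"older with \eqref{2.nablap} and \eqref{2.L3}); get $C^{0,\beta}$ regularity of the stochastic convolution in $L^2(\dom)$ from BDG and Kolmogorov, for which the \emph{higher} moment estimate \eqref{2.L3} with $p>d+2$ is indeed essential; and then close with Arzel\`a--Ascoli using the compact embedding $L^1(\dom)\hookrightarrow\hookrightarrow H^m(\dom)'$ and the $L^\infty(0,T;L^1(\dom))$ bound \eqref{2.hp} for pointwise precompactness. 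This is correct and self-contained. Your observation that the $W^{\alpha,2}$ bound \eqref{2.time} alone cannot produce $C^0$-in-time compactness (since $2\alpha<1$) is exactly the reason the paper falls back on the Aldous condition rather than reusing \eqref{2.time}; your decomposition is the explicit analogue of what the Aldous verification in the cited reference would ultimately do anyway. The trade-off: the paper's citation is shorter, while your argument is more transparent and makes visible why the higher-moment lemma (Lemma~\ref{lem.mom}) is needed here and not merely later.
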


\begin{proof}
The tightness in $\widetilde{Z}_T$ follows from \cite[Corollary 2.6]{BrMo14}
with the spaces $U=H^m(\dom)$ and $V=H^1(\dom)$. Indeed, estimate \eqref{2.L1}
is exactly condition (a) in \cite[Corollary 2.6]{BrMo14} and estimate \eqref{2.H1}
corresponds to condition (b). Condition (c), i.e., $(u^{(N)})$ satisfies the
Aldous condition in $H^m(\dom)'$, can be verified as in the proof of Lemma 11
in \cite{DJZ19}. Thus, the set of laws of $(u^{(N)})$ is tight in $\widetilde{Z}_T$.

Next, we introduce for given $\kappa>0$ and $\alpha<1/(d+2)$
the sets $X_T=W^{\alpha,2}(0,T;H^m(\dom)')
\cap L^2(0,T;H^1(\dom))$ and $B_\kappa = \{v\in X_T:\|v\|_{X_T}\le\kappa\}$.
The compact embedding $X_T\hookrightarrow L^2(0,T;L^2(\dom))$ \cite[Corollary 5]{Sim87}
implies that $B_\kappa$ is a relatively compact set in $L^2(0,T;$ $L^2(\dom))$.
We deduce from estimates \eqref{2.H1} and \eqref{2.time} and the Chebyshev 
inequality that
\begin{align*}
  \Prob\{\|u^{(N)}\|_{X_T}>\kappa\} &\le \kappa^{-2}\E\|u^{(N)}\|_{X_T}^2 \\
	&\le \kappa^{-2}\big(\E\|u^{(N)}\|_{W^{\alpha,2}(0,T;H^m(\dom)')}^2
	+ \E\|u^{(N)}\|_{L^2(0,T;H^1(\dom))}^2\big) \le C\kappa^{-2}.
\end{align*}
Then the result follows directly from the definition of tightness.
\end{proof}

It follows from the previous lemma that the set of laws $({\mathcal L}(u^{(N)}))$
is tight in $Z_T = \widetilde{Z}_T\cap L^2(0,T;L^2(\dom))$ with the topology
$\mathbb{T}$ that is the maximum of $\widetilde{\mathbb{T}}$ and the topology
induced by the $L^2(0,T;L^2(\dom))$ norm.

\subsection{Strong convergence of $(u^{(N)})$}\label{ssec.strong}

Since $Z_T\times C^0([0,T];\R^n)$ satisfies the assumptions of the
Skorokhod--Jabubowski theorem \cite[Theorem C1]{BrOn10} and the set of laws
$({\mathcal L}(u^{(N)}))$ is tight in $(Z_T,\mathbb{T})$, this theorem implies
the existence of a subsequence of $(u^{(N)})$, which is not relabeled,
a probability space $(\widetilde\Omega,
\widetilde{\mathcal{F}},\widetilde\Prob)$ and, on this space,
$(Z_T\times C^0([0,T];\R^n))$-valued random variables $(\widetilde u,\widetilde W)$
and $(\widetilde u^{(N)},\widetilde W^{(N)})$ for $N\in\N$ such that
$(\widetilde u^{(N)},\widetilde W^{(N)})$ has the same law as $(u^{(N)},W)$
on $\mathcal{B}(Z_T\times C^0([0,T];\R^n))$ and, as $N\to\infty$,
$$
  (\widetilde u^{(N)},\widetilde W^{(N)})\to (\widetilde u,\widetilde W)
	\quad\mbox{in }Z_T\times C^0([0,T];\R^n)\ \widetilde\Prob\mbox{-a.s.}
$$
Because of the definition of the space $Z_T$, this convergence means 
$\widetilde\Prob$-a.s.
\begin{equation}
\label{2.conv}
\begin{aligned}
  \widetilde u^{(N)}\to \widetilde u &\quad\mbox{in }C^0([0,T];H^m(\dom)'),  \\
	\widetilde u^{(N)}\rightharpoonup \widetilde u &\quad\mbox{weakly in }
	L^2(0,T;H^1(\dom)), \\
	\widetilde u^{(N)}\to \widetilde u &\quad\mbox{in }L^2(0,T;L^2(\dom)), \\
  \widetilde W^{(N)}\to \widetilde W &\quad\mbox{in }C^0([0,T];\R^n).
\end{aligned}
\end{equation}

As in \cite{DJZ19}, we derive some regularity properties for the 
limit $\widetilde u$. We infer from the facts that $C^0([0,T];$ $H_N)$ is a 
Borel set of $C^0([0,T];H^m(\dom)')\cap L^2(0,T;L^2(\dom))$, 
$u^{(N)}$ is an element of $C^0([0,T];H_N)$ $\Prob$-a.s., and
$u^{(N)}$ and $\widetilde u^{(N)}$ have the same law on ${\mathcal B}(Z_T)$
that ${\mathcal L}(\widetilde u^{(N)})(C^0([0,T];H_N))=1$. 
Note that $\widetilde u$ is a $Z_T$-Borel random variable since
${\mathcal B}(Z_T\times C^0([0,T];\R^n))$ is a subset of
${\mathcal B}(Z_T)\times{\mathcal B}(C^0([0,T];\R^n))$. We deduce from
estimates \eqref{2.hp} and \eqref{2.nablap} 
and the fact that $u^{(N)}$ and $\widetilde u^{(N)}$
have the same laws that for any $p\ge 2$,
$$
  \sup_{n\in\N}\widetilde{\E}\bigg(\int_0^T\|\widetilde u^{(N)}(t)\|_{H^1(\dom)}^2
	\d t\bigg)^p 
	+ \sup_{n\in\N}\widetilde{\E}\bigg(\sup_{0<t<T}
	\|\widetilde u^{(N)}(t)\|_{H^m(\dom)'}\d t\bigg)^p \le C.
$$
In view of the embedding $L^1(\dom)\hookrightarrow H^m(\dom)'$, we infer the
existence of a subsequence  of $(\widetilde u^{(N)})$ (not relabeled) that is
weakly converging in $L^p(\widetilde\Omega;L^2(0,T;H^1(\dom)))$ and weakly*
converging in $L^p(\widetilde\Omega;L^\infty(0,T;H^m(\dom)'))$ as $N\to\infty$.
In view of the convergence of $\widetilde u^{(N)}\to\widetilde u$ in $Z_T$
$\widetilde\Prob$-a.s., we infer that the limit function satisfies 
$$
  \widetilde\E\bigg(\int_0^T\|\widetilde u(t)\|_{H^1(\dom)}^2\d t\bigg)^p
	+ \widetilde\E\bigg(\sup_{0<t<T}\|\widetilde u(t)\|_{H^m(\dom)'}\bigg)^p < \infty.
$$

Let $\widetilde\F$ and $\widetilde\F^{(N)}$ be the filtrations generated by
$(\widetilde u,\widetilde W)$ and $(\widetilde u^{(N)},\widetilde W^{(N)})$,
respectively. Then we conclude from \cite[Lemma 7]{BGJ13} (also see
\cite[Lemmas 14--15]{DJZ19}) that $\widetilde u$ is progressively measurable
with respect to $\widetilde\F$, $\widetilde u^{(N)}$ is progressively
measurable with respect to $\widetilde\F^{(N)}$, and
$(\widetilde W(t))_{t\in[0,T]}$ and $(\widetilde W^{(N)}(t))_{t\in[0,T]}$
are Wiener processes with respect to the corresponding filtrations.

We know that $u_i^{(N)}$ is nonnegative for $i=1,\ldots,n$. It turns out that
the limit $\widetilde u_i$ is also nonnegative. This is proved in the following lemma.

\begin{lemma}[Nonnegativity]\label{lem.pos}
It holds that $\widetilde u_i(x,t)\ge 0$ for a.e.\ $(x,t)\in Q_T$
$\widetilde\Prob$-a.s.\ and $i=1,\ldots,n$.
\end{lemma}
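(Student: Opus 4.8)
The plan is to use that nonnegativity is both a law-invariant property and stable under strong $L^2$ limits, so that it passes from the approximations $u^{(N)}$ first to $\widetilde u^{(N)}$ and then to $\widetilde u$. First I would introduce the cone
$$
  K = \big\{v\in L^2(0,T;L^2(\dom)): v_i\ge 0\mbox{ a.e.\ in }Q_T,\ i=1,\ldots,n\big\},
$$
which is convex and strongly closed, hence a Borel subset of $L^2(0,T;L^2(\dom))$. Since the topology $\mathbb{T}$ on $Z_T$ dominates the $L^2(0,T;L^2(\dom))$ topology, the canonical injection $Z_T\hookrightarrow L^2(0,T;L^2(\dom))$ is continuous, so $K$ pulls back to a Borel subset of $Z_T$.

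Next, because $u_i^{(N)}\ge 0$ a.e.\ in $Q_T$ $\Prob$-a.s., we have $\mathcal{L}(u^{(N)})(K)=1$. As $\widetilde u^{(N)}$ and $u^{(N)}$ have the same law on $\mathcal{B}(Z_T)$, it follows that $\mathcal{L}(\widetilde u^{(N)})(K)=1$, i.e.\ $\widetilde u_i^{(N)}\ge 0$ a.e.\ in $Q_T$ $\widetilde\Prob$-a.s.\ for every $N\in\N$. I would then pass to the limit using the strong convergence $\widetilde u^{(N)}\to\widetilde u$ in $L^2(0,T;L^2(\dom))$ $\widetilde\Prob$-a.s.\ recorded in \eqref{2.conv}: either extract, for $\widetilde\Prob$-a.e.\ $\omega$, a subsequence converging a.e.\ in $Q_T$, so that the a.e.\ limit of the nonnegative functions $\widetilde u_i^{(N)}$ is again nonnegative, or simply invoke that $K$ is strongly closed and therefore contains the limit. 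Both routes yield $\widetilde u_i(x,t)\ge 0$ for a.e.\ $(x,t)\in Q_T$ $\widetilde\Prob$-a.s.

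The argument is short, and I expect no serious obstacle: the only points deserving care are the Borel measurability of $K$ in $Z_T$ and the transfer of the full-measure statement across the two probability spaces, both of which rest on the equality of laws on $\mathcal{B}(Z_T)$ established in the previous subsection. Crucially, since the relevant convergence in \eqref{2.conv} holds $\widetilde\Prob$-almost surely in the strong $L^2$ topology, no additional stochastic or compactness estimates are needed beyond those already at hand.
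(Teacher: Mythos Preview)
Your argument is correct and follows essentially the same route as the paper: transfer nonnegativity from $u^{(N)}$ to $\widetilde u^{(N)}$ via equality of laws on $\mathcal{B}(Z_T)$, then pass to the limit using the $\widetilde\Prob$-a.s.\ strong $L^2(0,T;L^2(\dom))$ convergence. The only cosmetic difference is that the paper phrases the transfer step through the Borel functional $v\mapsto\|v^-\|_{L^2(0,T;L^2(\dom))}$ (showing its expectation vanishes under both laws), whereas you use the closed Borel cone $K$ directly; the two formulations are interchangeable.
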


\begin{proof}
Let $i\in\{1,\ldots,n\}$. The entropy method used in Section \ref{sec.wz} 
implies that $u_i^{(N)}$ is nonnegative, so
$\E\|(u_i^{(N)})^-\|_{L^2(0,T;L^2(\dom))}=0$, where $z^-=\min\{0,z\}$.
The function $u_i^{(N)}$ is $Z_T$-Borel measurable and so does $(u_i^{(N)})^-$.
Therefore, using the equivalence of the laws of $u_i^{(N)}$ and 
$\widetilde u_i^{(N)}$ on $Z_T$ and setting 
$\mu^{(N)}:=\operatorname{Law}(u_i^{(N)})
=\operatorname{Law}(\widetilde u_i^{(N)})$, we find that
\begin{align*}
  \widetilde\E\|(\widetilde u_i^{(N)})^-\|_{L^2(0,T;L^2(\dom))}
	&= \int_{L^2(0,T;L^2(\dom))}\|y^-\|_{L^2(0,T;L^2(\dom))}\d\mu^{(N)}(y) \\
	&= \E\|(u_i^{(N)})^-\|_{L^2(0,T;L^2(\dom))} = 0.
\end{align*}
This shows that $\widetilde u_i^{(N)}\ge 0$ a.e.\ in $Q_T$ 
$\widetilde\Prob$-a.s.
The convergence (up to a subsequence) 
$\widetilde u^{(N)}\to\widetilde u$ a.e.\ in $Z_T$
$\widetilde\Prob$-a.s.\ then implies that $\widetilde u_i\ge 0$ a.e.\ in 
$Q_T$ $\widetilde\Prob$-a.s.
\end{proof}

The following lemma is needed to verify that $(\widetilde u,\widetilde W)$ is
a martingale solution to \eqref{1.eq}--\eqref{1.bic}. In view of the previous
convergence results, the proof is very similar to that one of Lemma 10 in 
\cite{DHJKN20} and therefore, we omit it.

\begin{lemma}\label{lem.E}
It holds for all $s$, $t\in[0,T]$ with $s\le t$, $i=1,\ldots,n$, and all
$\phi_1\in L^2(\dom)$ and all $\phi_2\in H^m(\dom)$ with $m>d/2+1$ and
satisfying $\na\phi_2\cdot\nu=0$ on $\pa\dom$ that
\begin{align*}
  \lim_{N\to\infty}\widetilde\E\int_0^T\big\langle\widetilde u_{i}^{(N)}(t)
	-\widetilde u_{i}(t),
	\phi_2\big\rangle\d t &= 0, \\ 
	\lim_{N\to\infty}\widetilde\E\big\langle\widetilde u_{i}^{(N)}(0)
	-\widetilde u_{i}(0),\phi_2\big\rangle &= 0, \\ 
	\lim_{N\to\infty}\widetilde\E\int_0^T\bigg|\sum_{j=1}^n\int_0^t\Big\langle
	A_{ij}(\widetilde u^{(N)}(s))\na\widetilde u_j^{(N)}(s)
	- A_{ij}(\widetilde u(s))\na\widetilde u_j(s),\na\phi_2\Big\rangle
	\d s\bigg|\d t &= 0, \\ 
	\lim_{N\to\infty}\widetilde\E\int_0^T\bigg|\sum_{j=1}^n\int_0^t
	\Big\langle\sigma_{ij}(\widetilde u^{(N)}(s))\textnormal{d}\widetilde W_j^{(N)}(s)
	-\sigma_{ij}(\widetilde u(s))\d\widetilde W_j(s),\phi_1\Big\rangle\bigg|^2 
	\d t &= 0. 
\end{align*}
\end{lemma}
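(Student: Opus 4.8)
The plan is to establish the four limits one at a time, upgrading the $\widetilde\Prob$-almost sure convergences \eqref{2.conv} to convergence in expectation by means of the uniform moment bounds and the Vitali convergence theorem. Throughout I use that $\widetilde u^{(N)}$ and $u^{(N)}$ share the same law on $\mathcal{B}(Z_T)$, so all estimates of Section~\ref{sec.approx} transfer verbatim to the new probability space. For the first two limits, fix $\phi_2\in H^m(\dom)$. The almost sure convergence $\widetilde u^{(N)}\to\widetilde u$ in $C^0([0,T];H^m(\dom)')$ gives $\sup_{t\in[0,T]}|\langle\widetilde u_i^{(N)}(t)-\widetilde u_i(t),\phi_2\rangle|\to 0$ $\widetilde\Prob$-a.s., hence the integrand in the first limit converges pointwise in $(\omega,t)$ and the value at $t=0$ converges pointwise in $\omega$. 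The uniform bound on $\widetilde\E(\sup_{0<t<T}\|\widetilde u^{(N)}(t)\|_{H^m(\dom)'})^p$ for $p\ge 2$, inherited from \eqref{2.hp}, furnishes the uniform integrability needed for Vitali's theorem, so both limits vanish.

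For the elliptic term, I would use the weak--strong decomposition
\[
  A_{ij}(\widetilde u^{(N)})\na\widetilde u_j^{(N)} - A_{ij}(\widetilde u)\na\widetilde u_j
  = \big(A_{ij}(\widetilde u^{(N)})-A_{ij}(\widetilde u)\big)\na\widetilde u_j^{(N)}
  + A_{ij}(\widetilde u)\big(\na\widetilde u_j^{(N)}-\na\widetilde u_j\big).
\]
Since $\phi_2\in H^m(\dom)$ with $m>d/2+1$ embeds into $W^{1,\infty}(\dom)$, the gradient $\na\phi_2$ is bounded. Testing the first summand against $\na\phi_2$ and using the H\"older inequality bounds it by $\|A_{ij}(\widetilde u^{(N)})-A_{ij}(\widetilde u)\|_{L^2(\dom)}\|\na\widetilde u_j^{(N)}\|_{L^2(\dom)}\|\na\phi_2\|_{L^\infty(\dom)}$; because $A_{ij}$ depends affinely on $u$, the strong convergence $\widetilde u^{(N)}\to\widetilde u$ in $L^2(0,T;L^2(\dom))$ together with the higher integrability \eqref{2.Ld}--\eqref{2.L3} forces $A_{ij}(\widetilde u^{(N)})\to A_{ij}(\widetilde u)$ in $L^2(Q_T)$, while $\na\widetilde u_j^{(N)}$ stays bounded in $L^2(0,T;L^2(\dom))$, so this contribution tends to zero after integration in time and passage to expectation (again via Vitali). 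The second summand, tested against $\na\phi_2$, produces the pairing $\langle\na\widetilde u_j^{(N)}-\na\widetilde u_j,\,A_{ij}(\widetilde u)\na\phi_2\rangle$; for a.e.\ fixed $\omega$ the function $A_{ij}(\widetilde u)\na\phi_2$ lies in $L^2(0,T;L^2(\dom))$, and the weak convergence $\widetilde u^{(N)}\rightharpoonup\widetilde u$ in $L^2(0,T;H^1(\dom))$ makes this pairing vanish, the moment bounds again supplying uniform integrability.

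The main obstacle is the stochastic term, where both integrand and integrator depend on $N$ and, after Skorokhod, the approximations no longer share a filtration with a single fixed Wiener process. After Cauchy--Schwarz against $\phi_1\in L^2(\dom)$ it suffices to control the $L^2(\dom)$-norm of the difference of the two stochastic integrals, so I split
\[
  \int_0^t\sigma_{ij}(\widetilde u^{(N)})\,\d\widetilde W_j^{(N)}
  - \int_0^t\sigma_{ij}(\widetilde u)\,\d\widetilde W_j
  = \int_0^t\big(\sigma_{ij}(\widetilde u^{(N)})-\sigma_{ij}(\widetilde u)\big)\d\widetilde W_j^{(N)}
  + \int_0^t\sigma_{ij}(\widetilde u)\,\d\big(\widetilde W_j^{(N)}-\widetilde W_j\big).
\]
The first integral is controlled by the It\^o isometry in $L^2(\dom)$ and the Lipschitz bound in (A4), giving $\widetilde\E\big\|\int_0^t(\sigma_{ij}(\widetilde u^{(N)})-\sigma_{ij}(\widetilde u))\d\widetilde W_j^{(N)}\big\|_{L^2(\dom)}^2 \le C_\sigma^2\,\widetilde\E\int_0^T\|\widetilde u^{(N)}-\widetilde u\|_{L^2(\dom)}^2\,\d t$, which tends to zero by the strong $L^2(0,T;L^2(\dom))$ convergence and uniform integrability. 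For the second integral the It\^o isometry is unavailable, since $\widetilde W^{(N)}-\widetilde W$ need not be a martingale in the relevant filtration; here I would invoke the standard convergence lemma for stochastic integrals with varying integrators, applied exactly as in \cite[Lemma~10]{DHJKN20}. The progressive measurability of $\sigma_{ij}(\widetilde u)$, the almost sure uniform convergence $\widetilde W^{(N)}\to\widetilde W$ in $C^0([0,T];\R^n)$, and the uniform second-moment bound on $\sigma_{ij}(\widetilde u)$ together yield convergence of the stochastic integrals in probability, which the moment estimates upgrade to the claimed $L^2(\widetilde\Omega\times(0,T))$ convergence. This reconciliation of the change of Wiener process with the limit is the crux of the argument, all other steps being routine applications of the already established convergences and uniform bounds.
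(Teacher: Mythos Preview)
Your approach follows the same template the paper adopts (it omits the proof and refers to \cite[Lemma~10]{DHJKN20}): almost sure convergence from \eqref{2.conv} upgraded to convergence in expectation by Vitali via the higher moment bounds, and a weak--strong splitting for the elliptic term. The first three limits are handled correctly.

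There is, however, a technical slip in the stochastic term. You split
\[
\int_0^t\sigma_{ij}(\widetilde u^{(N)})\,\d\widetilde W_j^{(N)} - \int_0^t\sigma_{ij}(\widetilde u)\,\d\widetilde W_j
= \int_0^t\big(\sigma_{ij}(\widetilde u^{(N)})-\sigma_{ij}(\widetilde u)\big)\d\widetilde W_j^{(N)} + \int_0^t\sigma_{ij}(\widetilde u)\,\d\big(\widetilde W_j^{(N)}-\widetilde W_j\big)
\]
and invoke the It\^o isometry on the first piece. But that piece is not a well-defined It\^o integral: $\sigma_{ij}(\widetilde u)$ is progressively measurable with respect to $\widetilde{\mathbb F}$, not $\widetilde{\mathbb F}^{(N)}$, while $\widetilde W^{(N)}$ is a Wiener process only with respect to $\widetilde{\mathbb F}^{(N)}$. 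You correctly flag this adaptedness obstruction for the \emph{second} piece, yet it already contaminates the first. The cure is exactly the lemma you cite at the end: the convergence result for stochastic integrals with jointly varying integrands and integrators (as used in \cite[Lemma~10]{DHJKN20}) applies directly to the \emph{undecomposed} difference, using $\sigma_{ij}(\widetilde u^{(N)})\to\sigma_{ij}(\widetilde u)$ in $L^2(0,T;L^2(\dom))$ in probability (from the Lipschitz bound in (A4) and the strong $L^2$ convergence) together with $\widetilde W^{(N)}\to\widetilde W$ in $C^0([0,T];\R^n)$ $\widetilde\Prob$-a.s. The splitting is neither needed nor justified; drop it and invoke the lemma once, and your argument is complete.
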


Next, we define for $t\in[0,T]$ and $i=1,\ldots,n$,
\begin{align}
  \Lambda^{(N)}_i\big(\widetilde u^{(N)},\widetilde W^{(N)},\phi\big)(t)
	&:= \big\langle\Pi_N(\widetilde u_i(0)),\phi\big\rangle \nonumber \\
	&\phantom{xx}{}
	- \int_0^t\sum_{j=1}^n\big\langle \Pi_N(A_{ij}(\widetilde u^{(N)}(s))
	\na\widetilde u^{(N)}_j(s)),\na\phi\big\rangle \d s \nonumber \\
	&\phantom{xx}{}
	+ \sum_{j=1}^n\bigg\langle\int_0^t\Pi_N\sigma_{ij}(\widetilde u^{(N)}(s))
	\textnormal{d}\widetilde W_j^{(N)}(s),\phi\bigg\rangle, \label{2.LamN} \\
	\Lambda_i\big(\widetilde u,\widetilde W,\phi\big)(t)
	&:= \langle\widetilde u_i(0),\phi\rangle
	- \sum_{j=1}^n\int_0^t\big\langle A_{ij}(\widetilde u(s))
	\na\widetilde u_j(s),\na\phi\big\rangle \d s \nonumber \\ 
	&\phantom{xx}{}+ \sum_{j=1}^n\bigg\langle\int_0^t\sigma_{ij}(\widetilde u(s))
	\d\widetilde W_j(s),\phi\bigg\rangle. \label{2.Lam}
\end{align}
The following corollary is essentially 
a consequence of Lemma \ref{lem.E}; see \cite[Corollary 17]{DJZ19} for a proof.

\begin{corollary}\label{coro.E}
It holds for any $\phi_1\in L^2(\dom)$ and any $\phi_2\in H^m(\dom)$ 
with $m>d/2+1$ and satisfying $\na\phi_2\cdot\nu=0$ on $\pa\dom$ that
\begin{align*}
  \lim_{N\to\infty}\big\|\langle\widetilde u_{i}^{(N)},\phi_2\rangle
	- \langle\widetilde u_{i},\phi_2\rangle\big\|_{L^1(\widetilde\Omega\times(0,T))} 
	&= 0, \\
  \lim_{N\to\infty}\big\|\Lambda_i^{(N)}\big(\widetilde u^{(N)},\widetilde W^{(N)},
	\phi_1\big)	- \Lambda_i\big(\widetilde u,\widetilde W,\phi_1\big)
	\big\|_{L^1(\widetilde\Omega\times(0,T))} 
	&= 0.
\end{align*}
\end{corollary}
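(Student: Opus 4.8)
The plan is to establish the two $L^1(\widetilde\Omega\times(0,T))$ limits separately, in each case reducing to the termwise convergences already recorded in Lemma \ref{lem.E} and upgrading them to genuine $L^1$ convergence by a uniform-integrability argument. The only ingredients beyond Lemma \ref{lem.E} are the passage from signed to absolute-value convergence for the first limit and the removal of the Galerkin projections $\Pi_N$ appearing in $\Lambda_i^{(N)}$; the moment bounds from the earlier lemmas, transferred to the tilde variables via equality of laws, furnish the required uniform integrability throughout.

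For the first limit I would not use the signed first convergence of Lemma \ref{lem.E} but work directly with the $\widetilde\Prob$-a.s.\ convergence $\widetilde u^{(N)}\to\widetilde u$ in $C^0([0,T];H^m(\dom)')$ from \eqref{2.conv}. Since $\phi_2\in H^m(\dom)$, this gives $\langle\widetilde u_i^{(N)}(t)-\widetilde u_i(t),\phi_2\rangle\to 0$ uniformly in $t\in[0,T]$, $\widetilde\Prob$-a.s. To pass this to $L^1(\widetilde\Omega\times(0,T))$ I would invoke Vitali's convergence theorem. The uniform integrability follows from $|\langle\widetilde u_i^{(N)}(t),\phi_2\rangle|\le\|\phi_2\|_{H^m(\dom)}\|\widetilde u_i^{(N)}(t)\|_{H^m(\dom)'}$ together with the embedding $L^1(\dom)\hookrightarrow H^m(\dom)'$ and the higher-moment bound \eqref{2.hp}, which yields $\sup_N\widetilde\E\big(\sup_{0<t<T}\|\widetilde u_i^{(N)}(t)\|_{H^m(\dom)'}\big)^p\le C$ for some $p>1$; hence $(\langle\widetilde u_i^{(N)},\phi_2\rangle)_N$ is bounded in $L^p(\widetilde\Omega\times(0,T))$ and a fortiori uniformly integrable.

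For the second limit I would split $\Lambda_i^{(N)}-\Lambda_i$ into the initial-datum, diffusion, and stochastic contributions. The initial-datum difference $\langle\Pi_N(\widetilde u_i(0))-\widetilde u_i(0),\phi_1\rangle$ is independent of $t$ and tends to $0$ in $L^1(\widetilde\Omega)$, since $\Pi_N(\widetilde u_i(0))\to\widetilde u_i(0)$ in $L^2(\dom)$ $\widetilde\Prob$-a.s.\ while the uniform $L^2(\widetilde\Omega;L^2(\dom))$-bound inherited from \eqref{2.hp} provides uniform integrability. For the diffusion term I would move the projection onto the test function by self-adjointness of $\Pi_N$, writing $\langle\Pi_N(A_{ij}(\widetilde u^{(N)})\na\widetilde u_j^{(N)}),\na\phi_1\rangle=\langle A_{ij}(\widetilde u^{(N)})\na\widetilde u_j^{(N)},\Pi_N\na\phi_1\rangle$, and then add and subtract $\langle A_{ij}(\widetilde u^{(N)})\na\widetilde u_j^{(N)},\na\phi_1\rangle$; the projection error is bounded by $\|\Pi_N\na\phi_1-\na\phi_1\|_{L^2(\dom)}\to 0$ against the $L^2$-bound on $A_{ij}(\widetilde u^{(N)})\na\widetilde u_j^{(N)}$ coming from the moment estimates, and the remaining difference is exactly the third convergence of Lemma \ref{lem.E}. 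The stochastic term is treated in the same spirit: $\Pi_N$ commutes with the It\^o integral and can be shifted to $\phi_1$, so the projection error is controlled by $\|\Pi_N\phi_1-\phi_1\|_{L^2(\dom)}\to 0$ times the $L^2(\widetilde\Omega;L^2(\dom))$-bound on the stochastic integral (It\^o isometry, the growth in Assumption (A4), and the moment bounds), while the un-projected difference is governed by the fourth convergence of Lemma \ref{lem.E}, which gives $L^2(\widetilde\Omega\times(0,T))$-convergence and hence $L^1$-convergence.

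The step I expect to be most delicate is the uniform-integrability upgrade for the first limit, since Lemma \ref{lem.E} supplies only signed convergence there; the higher-moment bounds \eqref{2.hp}--\eqref{2.nablap} are precisely what makes Vitali's theorem applicable. The other point requiring care is the bookkeeping of $\Pi_N$ inside the stochastic integral, where the projection must be removed simultaneously with the replacement of $\widetilde W^{(N)}$ by $\widetilde W$, but once the fourth convergence of Lemma \ref{lem.E} is granted this reduces to the strong convergence $\Pi_N\to\mathrm{id}$ on $L^2(\dom)$, exactly as in \cite[Corollary 17]{DJZ19}.
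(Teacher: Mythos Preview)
Your overall strategy---using the a.s.\ convergences \eqref{2.conv} together with Vitali's theorem via the higher-moment bounds, and stripping the Galerkin projections $\Pi_N$ by $\Pi_N\to\mathrm{id}$---is indeed the route the paper invokes from \cite[Corollary 17]{DJZ19}. However, the diffusion step contains a genuine gap: you rely on an ``$L^2$-bound on $A_{ij}(\widetilde u^{(N)})\na\widetilde u_j^{(N)}$'' to pair against $\|\Pi_N\na\phi_1-\na\phi_1\|_{L^2(\dom)}$, but no such bound is available. Since $A_{ij}(u)$ is affine in $u$ and both $(\widetilde u_i^{(N)})$ and $(\na\widetilde u_j^{(N)})$ are controlled only in $L^2(\dom)$, the flux $A_{ij}(\widetilde u^{(N)})\na\widetilde u_j^{(N)}$ is bounded merely in $L^1(\dom)$ (exactly the estimate used for $J_3$ in the proof of \eqref{2.time}). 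To close the argument you therefore need the test function's gradient in $L^\infty(\dom)$, i.e.\ $\phi\in H^m(\dom)$ with $m>d/2+1$. The statement's use of $\phi_1\in L^2(\dom)$ in the second limit is in fact a typo: the very definition of $\Lambda_i$ involves $\na\phi$, and the subsequent application in the paper takes $\phi\in H^m(\dom)$, matching the $\phi_2$ of Lemma \ref{lem.E}.

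Once you work with $\phi\in H^m(\dom)$, the projection error in the diffusion term should be read as $\na(\Pi_N\phi)-\na\phi$ rather than $\Pi_N\na\phi-\na\phi$ (this is what one obtains from testing $\Pi_N\Delta(\cdot)$ against $\phi$, using self-adjointness of $\Pi_N$ and integrating by parts with the Neumann condition satisfied by $\Pi_N\phi\in H_N$). The eigenfunction expansion gives $\Pi_N\phi\to\phi$ in $H^m(\dom)$, hence $\na(\Pi_N\phi)\to\na\phi$ in $H^{m-1}(\dom)\hookrightarrow L^\infty(\dom)$, which now pairs correctly with the uniform $L^1(\dom)$-type bound on the flux, and the rest of your outline goes through. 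A minor remark: the uniform integrability for the initial-datum term comes from Assumption (A2), namely $u^0\in L^r(\Omega;L^2(\dom))$, rather than from \eqref{2.hp}, which is only an $L^1(\dom)$-bound.
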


We proceed with the proof of Theorem \ref{thm.ex}. Since $\widetilde u^{(N)}$
is a strong solution to \eqref{2.sde}--\eqref{2.ic}, it satisfies 
$$
  \langle u^{(N)}_i(t),\phi\rangle = \Lambda_i^{(N)}(u^{(N)},W,\phi)(t)
	\quad\mbox{for a.e. }t\in[0,T]\ \Prob\mbox{-a.s.},\ i=1,\ldots,n,
$$
for any $\phi\in H^m(\dom)$. Hence,
$$
  \int_0^T\E\big|\langle u_i^{(N)}(t),\phi\rangle - \Lambda_i^{(N)}(u^{(N)},W,\phi)(t)
	\big|\d t = 0, \quad i=1,\ldots,n.
$$
We deduce from the equivalence of the laws of $(u^{(N)},W)$ and 
$(\widetilde u^{(N)},\widetilde W^{(N)})$ that
$$
  \int_0^T\widetilde\E\big|\langle\widetilde u_i^{(N)}(t),\phi\rangle
	- \Lambda_i^{(N)}(\widetilde u^{(N)},\widetilde W^{(N)},\phi)(t)\big|\d t = 0,
	\quad i=1,\ldots,n.
$$
By Corollary \ref{coro.E}, we can pass to the limit $N\to\infty$ to obtain
$$
  \int_0^T\widetilde\E\big|\langle\widetilde u_i(t),\phi\rangle
	- \Lambda_i(\widetilde u,\widetilde W,\phi)(t)\big|\d t = 0, \quad i=1,\ldots,n.
$$
This identity holds for all $\phi\in H^m(\dom)$ such that $\na\phi\cdot\nu=0$ on
$\pa\dom$ and hence, by density, for any $\phi\in H^m(\dom)$. This shows that
$$
  \big|\langle\widetilde u_i(t),\phi\rangle
	- \Lambda_i(\widetilde u,\widetilde W,\phi)(t)\big| = 0
	\quad\mbox{for a.e. }t\in[0,T]\ \widetilde\Prob\mbox{-a.s.},\ i=1,\ldots,n.
$$
We infer from the definition of $\Lambda_i$ that
$$
  \langle \widetilde u_i(t),\phi\rangle 
	= \langle\widetilde u_i(0),\phi\rangle 
	- \sum_{j=1}^n\int_0^t\big\langle A_{ij}(\widetilde u(s))
	\na\widetilde u_j(s),\na\phi\big\rangle \d s 
	+ \sum_{j=1}^n\bigg\langle\int_0^t\sigma_{ij}(\widetilde u(s))
	\d\widetilde W_j(s),\phi\bigg\rangle
$$
for a.e.\ $t\in[0,T]$ $\widetilde\Prob$-a.s.\ and all $\phi\in H^m(\dom)$. Set
$\widetilde U=(\widetilde\Omega,\widetilde\F,\widetilde\F,\widetilde\Prob)$.
Then $(\widetilde U,\widetilde u,\widetilde W)$ is a martingale solution to
\eqref{1.eq}--\eqref{1.bic}. This finishes the proof.


\section{Proof of Theorem \ref{thm.no}}\label{sec.no}

Next, we prove the existence of a martingale solution in the case without 
self-diffusion. 
The proof is similar to that one of Theorem \ref{thm.ex}, but we have less regularity
for $u_i^{(N)}$ than in the self-diffusion case. Therefore, we need to adapt
the function spaces. Moreover, since we do not have a uniform estimate for $u_i^{(N)}$
in $L^2(Q_T)$, the convergence of $\na(u_i^{(N)}u_j^{(N)})$
requires some care. Recall that we assume $d\le 3$.

\subsection{Tightness of the laws}

We show that the laws of $u^{(N)}$ are tight in the sub-Polish space
$$
  \widetilde Z_T := C^0([0,T];H^3(\dom)')\cap L_w^{8/7}(0,T;W^{1,8/7}(\dom)),
$$
endowed with the topology $\widetilde{\mathbb T}$
with is the maximum of the topology of
$C^0([0,T];H^3(\dom)')$ and the weak topology of $L_w^{8/7}(0,T;W^{1,8/7}(\dom))$.

\begin{lemma}
The sequence of laws $(\L(u^{(N)}))_{N\in\N}$ is tight in $\widetilde Z_T$,
$L^{5/4}(0,T;L^2(\dom))$, and $L^2(0,T;L^{3/2}(\dom))$. 
\end{lemma}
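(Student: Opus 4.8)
The plan is to establish tightness in three separate spaces, each requiring its own compactness criterion, exactly mirroring the structure used in the self-diffusion case (Lemma \ref{lem.tight}) but with the weaker regularity available without self-diffusion. First I would prove tightness in $\widetilde Z_T$. The natural tool is \cite[Corollary 2.6]{BrMo14} applied with the spaces $U=H^3(\dom)$ and $V=W^{1,8/7}(\dom)$: condition (a) is supplied by the uniform $L^\infty(0,T;L^1(\dom))$ bound \eqref{2.L1} (which embeds into $L^\infty(0,T;H^3(\dom)')$), condition (b) is the uniform bound on $\E\|u^{(N)}\|_{L^{8/7}(0,T;W^{1,8/7}(\dom))}$ coming from \eqref{2.W1r.no} with $\rho_1=5/4>8/7$ (noting $d\le 3$, so $W^{1,\rho_1}\hookrightarrow W^{1,8/7}$ by Hölder in time and space), and condition (c) is the Aldous condition in $H^3(\dom)'$, which I would verify from the fractional time-regularity estimate \eqref{2.time} together with the standard criterion, just as in \cite{DJZ19}.

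For the tightness in the two strong Lebesgue spaces $L^{5/4}(0,T;L^2(\dom))$ and $L^2(0,T;L^{3/2}(\dom))$ the strategy is the Chebyshev-plus-compact-embedding argument. For each target space I would pick an interpolation space $X_T$ that embeds compactly into it and in which I have a uniform second-moment bound. For the first target, I would set $X_T = W^{\alpha,2}(0,T;H^3(\dom)')\cap L^{4/d}(0,T;L^2(\dom))$ (here $d\le 3$ so $4/d\ge 4/3>5/4$) and invoke a Simon-type compactness theorem \cite[Corollary 5]{Sim87}; the uniform bounds come from the time regularity \eqref{2.time} and the estimate \eqref{2.4dL2.no}. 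For the second target $L^2(0,T;L^{3/2}(\dom))$, the relevant spatial compactness is $W^{1,8/7}(\dom)\hookrightarrow\hookrightarrow L^{3/2}(\dom)$ (valid for $d\le 3$ since the Sobolev exponent of $W^{1,8/7}$ exceeds $3/2$), so I would take $X_T=W^{\alpha,2}(0,T;H^3(\dom)')\cap L^2(0,T;W^{1,8/7}(\dom))$, again compactly embedded into the target by \cite{Sim87}, with the uniform bound furnished by \eqref{2.W1r.no} and \eqref{2.time}. In each case the Chebyshev inequality $\Prob\{\|u^{(N)}\|_{X_T}>\kappa\}\le\kappa^{-2}\E\|u^{(N)}\|_{X_T}^2\le C\kappa^{-2}$ yields tightness directly from the definition.

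The main obstacle I expect is bookkeeping the exponents so that every embedding and every interpolation is genuinely valid in dimension $d\le 3$, since the regularity margins are thin here. In particular I must check carefully that the fractional exponent $\alpha<1/(d+2)$ available from \eqref{2.time} is simultaneously compatible with the time-integrability needed in the Simon embedding for both target spaces, and that the spatial Sobolev embedding $W^{1,8/7}(\dom)\hookrightarrow\hookrightarrow L^{3/2}(\dom)$ is strictly compact rather than merely continuous. Unlike the self-diffusion case, I cannot lean on an $L^2(Q_T)$ bound for $u_i^{(N)}$, so the choice $X_T=W^{\alpha,2}(0,T;H^3(\dom)')\cap L^{4/d}(0,T;L^2(\dom))$ must rely precisely on \eqref{2.4dL2.no}, whose validity was itself restricted to $d\le 4$; verifying that $4/d\ge 5/4$ and that the interpolation inequality underlying Simon's theorem applies with these non-Hilbertian time exponents is the delicate point. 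Once the exponent arithmetic is confirmed, the three tightness conclusions follow routinely, and as in the self-diffusion case one concludes tightness in the intersection space $Z_T$ with the maximum topology.
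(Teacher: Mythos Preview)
Your plan for tightness in $\widetilde Z_T$ is fine and matches the paper. The trouble lies in your choices of $X_T$ for the two Lebesgue targets.

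For $L^{5/4}(0,T;L^2(\dom))$ you propose
$X_T = W^{\alpha,2}(0,T;H^3(\dom)')\cap L^{4/d}(0,T;L^2(\dom))$
and appeal to Simon's compactness theorem. But Simon's criterion requires a \emph{compact} spatial embedding $X\hookrightarrow\hookrightarrow B$; in your setup both the source and target carry $L^2(\dom)$ in space, and $L^2(\dom)\hookrightarrow L^2(\dom)$ is obviously not compact. The bound \eqref{2.4dL2.no} is purely an integrability gain and provides no spatial compactness whatsoever. The paper instead takes $X_T=W^{\alpha,2}(0,T;H^3(\dom)')\cap L^{5/4}(0,T;W^{1,5/4}(\dom))$, using \eqref{2.W1r.no}; the point is that $W^{1,5/4}(\dom)\hookrightarrow\hookrightarrow L^2(\dom)$ for $d\le 3$, which supplies the missing compactness.

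For $L^2(0,T;L^{3/2}(\dom))$ you propose
$X_T=W^{\alpha,2}(0,T;H^3(\dom)')\cap L^2(0,T;W^{1,8/7}(\dom))$,
but no such bound is available: estimate \eqref{2.W1r.no} gives only $L^{\rho_1}$ in time with $\rho_1=(d+2)/(d+1)\le 5/4<2$, so you cannot place $u^{(N)}$ in $L^2(0,T;W^{1,8/7}(\dom))$. The only $L^2$-in-time gradient bound is \eqref{2.W11.no}, i.e.\ $L^2(0,T;W^{1,1}(\dom))$, and in $d=3$ the embedding $W^{1,1}(\dom)\hookrightarrow L^{3/2}(\dom)$ is \emph{critical}, hence not compact. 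The paper therefore first obtains compactness in $L^2(0,T;L^p(\dom))$ for $p<3/2$ via $W^{1,1}$, and then upgrades to the endpoint $p=3/2$ by an equi-integrability argument based on the uniform $L^\infty(0,T;L^1(\dom))$ bound for $u_i^{(N)}\log u_i^{(N)}$ (citing \cite[Prop.~1]{BCJ20}). This last step is genuinely needed and is absent from your outline.
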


\begin{proof}
The tightness in $\widetilde Z_T$ can be shown as in Lemma \ref{lem.tight}.
Furthermore, the tightness in $L^{5/4}(0,T;L^2(\dom))$ follows similarly
as in Lemma \ref{lem.tight}, observing that the embedding $W^{\alpha,2}(0,T;H^3(\dom)')
\cap L^{5/4}(0,T;W^{1,5/4}(\dom))\hookrightarrow L^{5/4}(0,T;L^2(\dom))$ is compact.
Here, we use the facts that the embedding $W^{1,5/4}(\dom)\hookrightarrow L^2(\dom)$ 
is compact if $d\le 3$ and that $(u_i^{(N)})$ is bounded in
$W^{\alpha,2}(0,T;H^3(\dom)')$ and $L^{5/4}(0,T;W^{1,5/4}(\dom))$ due to 
estimates \eqref{2.time} and \eqref{2.W1r.no}, respectively. 
Finally, the last statement is a consequence of the
compact embedding $W^{\alpha,2}(0,T;H^3(\dom)')\cap L^2(0,T;W^{1,1}(\dom))
\hookrightarrow L^2(0,T;L^p(\dom))$ for any $p<3/2$ 
as well as estimates \eqref{2.time} and
\eqref{2.W11.no}. In fact, the compactness in $L^2(0,T;L^p(\dom))$
is valid up to $p=3/2$ by taking into account the uniform bound of
$u_i^{(N)}\log u_i^{(N)}$ in $L^\infty(0,T;L^1(\dom))$; see \cite[Prop.~1]{BCJ20}.
\end{proof}

The previous lemma shows that $(\L(u^{(N)}))$ is tight in
$$
  Z_T = \widetilde Z_T\cap L^2(0,T;L^{3/2}(\dom))\cap L^{5/4}(0,T;L^2(\dom))
$$
with the topology that is the maximum of $\widetilde{\mathbb T}$ and the
topologies induced by the $L^2(0,T;$ $L^{3/2}(\dom))$ and 
$L^{5/4}(0,T;L^2(\dom))$ norms.

\subsection{Strong convergence of $(u^{(N)})$}

Applying the Shorokhod--Jabubowski theorem as in Section \ref{ssec.strong},
we obtain the existence of a subsequence of $(u^{(N)})$ (not relabeled),
a probability space $(\widetilde\Omega,\widetilde\F,\widetilde\Prob)$ and,
on this space, $(Z_T\times C^0([0,T];\R^n))$-valued random variables 
$(\widetilde u,\widetilde W)$ and $(\widetilde u^{(N)},\widetilde W^{(N)})$
for $N\in\N$ such that $(\widetilde u^{(N)},\widetilde W^{(N)})$ has the same
laws as $(u^{(N)},W)$ on $\mathcal{B}(Z_T\times C^0([0,T];\R^n))$.
The convergence results \eqref{2.conv} hold with the exception that only
\begin{equation}\label{3.conv}
\begin{aligned}
	\widetilde u^{(N)}\rightharpoonup \widetilde u &\quad\mbox{weakly in }
	L^{8/7}(0,T;W^{1,8/7}(\dom)), \\
	\widetilde u^{(N)}\to \widetilde u &\quad\mbox{in }L^2(0,T;L^{3/2}(\dom)), \\
	\widetilde u^{(N)}\to \widetilde u &\quad\mbox{in }L^{5/4}(0,T;L^2(\dom))
\end{aligned}
\end{equation}
$\widetilde\Prob$-a.s.\ as $N\to\infty$. 
Moreover, similarly as in Section \ref{ssec.strong}, we deduce from
estimates \eqref{2.W1r.no} and \eqref{2.hp} that
$$
  \sup_{n\in\N}\widetilde{\E}\bigg(\int_0^T
	\|\widetilde u^{(N)}(t)\|_{W^{1,8/7}(\dom)}^{8/7}\d t\bigg)^p 
	+ \sup_{n\in\N}\widetilde{\E}\bigg(\sup_{0<t<T}\|\widetilde u^{(N)}(t)\|_{H^m(\dom)'}
	\d t\bigg)^p \le C.
$$
Because of the measurability of the map
$$
  C^0([0,T];H_N)\to L^{8/7}(0,T;W^{1,8/7}(\dom))\cap Z_T, w\mapsto w^2,
$$
the continuity of the norm on $L^{8/7}(0,T;W^{1,8/7}(\dom))\cap Z_T$, the
identity $u_i^{(N)}u_j^{(N)} = (u_i^{(N)}+u_j^{(N)})^2 - (u_i^{(N)})^2
- (u_j^{(N)})^2$, the equality of the laws of $u^{(N)}$ and $\widetilde u^{(N)}$
on $\mathcal{B}(Z_T)$, and estimates \eqref{2.dL24.no} and \eqref{2.dnabla.no} 
for $d=3$, we obtain
\begin{equation}\label{3.W1.no}
  \sup_{N \in \N} \widetilde\E \bigg(\int_0^T\|\widetilde{u}_i^{(N)}
	\widetilde{u}_j^{(N)}\|_{W^{1,8/7}(\dom)}^{8/7} \d t\bigg)^p 
	\le C.
\end{equation}
We infer the existence of a subsequence of $(\widetilde u^{(N)})$ (not relabeled) 
that is weakly converging in $L^p(\widetilde\Omega;L^{8/7}(0,T;W^{1,8/7}(\dom)))$ 
and weakly* converging in $L^p(\widetilde\Omega;C^0([0,T];H^3(\dom)'))$ 
as $N\to\infty$.
Thus, taking into account the convergence $\widetilde u^{(N)}\to\widetilde u$ in $Z_T$
$\widetilde\Prob$-a.s., we conclude that the limit function satisfies
$$
  \widetilde\E\bigg(\int_0^T\|\widetilde u(t)\|_{W^{1,8/7}(\dom)}^{8/7}\d t\bigg)^p
	+ \widetilde\E\bigg(\sup_{0<t<T}\|\widetilde u(t)\|_{H^3(\dom)'}\bigg)^p < \infty
	\quad\mbox{for }p<\infty.
$$

We verify similarly as in Lemma \ref{lem.pos} that $\widetilde u_i(x,t)\ge 0$ for
a.e.\ $(x,t)\in Q_T$ $\widetilde\Prob$-a.s.\ and $i=1,\ldots,n$.
The only difference to the proof is that we work in the space $L^{5/4}(0,T;L^2(\dom))$
instead of $L^2(0,T;L^2(\dom))$. 

The following lemma allows us to identify the quadratic terms.

\begin{lemma}\label{lem.uiuj}
Let $(\widetilde u^{(N)})$ be the sequence of $Z_T$-valued random variables
constructed above. Then it holds for $\phi_2\in W^{1,8}(\dom)$
and satisfying $\na\phi_2\cdot\nu=0$ on $\pa\dom$ that
$$
  \lim_{N\to\infty}\widetilde{\E}\int_0^T\bigg|\sum_{i,j=1,\,j\neq i}^n\int_0^t
	\big\langle\na(\widetilde u_i^{(N)}\widetilde u_j^{(N)})(s)
	- \na(\widetilde u_i\widetilde u_j)(s),\na\phi_2\big\rangle\d s\bigg|\d t = 0.
$$
\end{lemma}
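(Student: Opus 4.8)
The plan is to reduce the claim to three ingredients, treating a single pair $i\neq j$ and summing at the end: (i) strong convergence of the products $\widetilde u_i^{(N)}\widetilde u_j^{(N)}\to\widetilde u_i\widetilde u_j$ in $L^1(Q_T)$ $\widetilde\Prob$-a.s.; (ii) the identification of $\na(\widetilde u_i\widetilde u_j)$ as the weak $L^{8/7}(Q_T)$ limit of $\na(\widetilde u_i^{(N)}\widetilde u_j^{(N)})$; and (iii) a uniform-integrability argument upgrading the resulting $\widetilde\Prob$-a.s.\ convergence to convergence in $\widetilde\E$. Since $u^{(N)}$ and $\widetilde u^{(N)}$ share their laws on $\mathcal{B}(Z_T)$, every bound proved for $u^{(N)}$—in particular \eqref{2.dL24.no} and \eqref{2.dnabla.no}—transfers to $\widetilde u^{(N)}$, so that \eqref{3.W1.no} is available.

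For (i) I would combine the strong convergence $\widetilde u^{(N)}\to\widetilde u$ in $L^2(0,T;L^{3/2}(\dom))$ from \eqref{3.conv}, which yields $\widetilde u^{(N)}\to\widetilde u$ a.e.\ in $Q_T$ $\widetilde\Prob$-a.s.\ along a subsequence and hence $\widetilde u_i^{(N)}\widetilde u_j^{(N)}\to\widetilde u_i\widetilde u_j$ a.e., with the equi-integrability furnished by \eqref{2.dL24.no}: squaring that estimate bounds $\widetilde u_i^{(N)}\widetilde u_j^{(N)}$ uniformly in $L^{4/3}(Q_T)$. As $4/3>1$, Vitali's theorem gives $\widetilde u_i^{(N)}\widetilde u_j^{(N)}\to\widetilde u_i\widetilde u_j$ in $L^1(Q_T)$ $\widetilde\Prob$-a.s.

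Step (ii) is the heart of the proof, because the absence of self-diffusion deprives us of the $L^2(Q_T)$ control available in Theorem \ref{thm.ex}; the only handle on the derivatives is the weak bound from \eqref{2.dnabla.no}, namely that $\na(\widetilde u_i^{(N)}\widetilde u_j^{(N)})$ is bounded in $L^{8/7}(Q_T)$ (this dual integrability is exactly why the lemma requires $\phi_2\in W^{1,8}(\dom)$). I would extract a weak limit $\xi$ of $\na(\widetilde u_i^{(N)}\widetilde u_j^{(N)})$ in $L^{8/7}(Q_T)$, test against $\psi\in C_c^\infty(\dom\times(0,T))$, integrate by parts in space, and invoke the strong $L^1(Q_T)$ convergence of step (i) to obtain $\int_{Q_T}\xi\,\psi=-\lim_N\int_{Q_T}\widetilde u_i^{(N)}\widetilde u_j^{(N)}\,\na\psi=-\int_{Q_T}\widetilde u_i\widetilde u_j\,\na\psi$, which identifies $\xi=\na(\widetilde u_i\widetilde u_j)$. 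Thus $\na(\widetilde u_i^{(N)}\widetilde u_j^{(N)})\rightharpoonup\na(\widetilde u_i\widetilde u_j)$ weakly in $L^{8/7}(Q_T)$ $\widetilde\Prob$-a.s.; pairing with the fixed deterministic function $\mathbf 1_{(0,t)}\na\phi_2\in L^8(Q_T)$ makes the inner integral $\int_0^t\langle\na(\widetilde u_i^{(N)}\widetilde u_j^{(N)})-\na(\widetilde u_i\widetilde u_j),\na\phi_2\rangle\,\d s$ converge to $0$ for each $t$, and dominated convergence in $t$ then shows that the full integrand of the lemma tends to $0$ $\widetilde\Prob$-a.s.

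Finally, for (iii) I would dominate the integrand by $T\,\|\na\phi_2\|_{L^8(\dom)}\int_0^T\|\na(\widetilde u_i^{(N)}\widetilde u_j^{(N)})-\na(\widetilde u_i\widetilde u_j)\|_{L^{8/7}(\dom)}\,\d s$ and, using Hölder in time and the higher moment bound \eqref{3.W1.no} with $p\ge 2$, check that this majorant is bounded in $L^q(\widetilde\Omega)$ for some $q>1$ and hence uniformly integrable over $\widetilde\Omega$; Vitali's theorem then converts the $\widetilde\Prob$-a.s.\ convergence of step (ii) into the asserted convergence in $\widetilde\E$. I expect step (ii) to be the main obstacle: with only weak convergence of the gradients and merely $L^2(0,T;L^{3/2}(\dom))$—not $L^2(Q_T)$—strong convergence of $\widetilde u^{(N)}$, the interchange of the product, the gradient, and the limit must be driven entirely by the strong $L^1(Q_T)$ convergence of the products together with the Laplace (divergence) structure of the diffusion, and care is needed to ensure that the weak identification of the gradient limit indeed holds $\widetilde\Prob$-almost surely and not merely in the Bochner-space sense.
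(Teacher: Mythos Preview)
Your strategy is sound and close to the paper's, but the paper organizes the limit differently and thereby sidesteps the delicate point you flag in step (ii). Instead of extracting an $\omega$-wise weak $L^{8/7}(Q_T)$ limit of $\na(\widetilde u_i^{(N)}\widetilde u_j^{(N)})$, the paper first restricts to test functions $\phi\in W^{2,\infty}(\dom)$ with $\na\phi\cdot\nu=0$, integrates by parts once, and reduces the question to the convergence of $g^{(N)}:=(\widetilde u_i^{(N)}\widetilde u_j^{(N)}-\widetilde u_i\widetilde u_j)\Delta\phi$; this converges to zero a.e.\ in $\widetilde\Omega\times Q_T$ and is bounded in $L^{8/7}(\widetilde\Omega\times Q_T)$ by \eqref{3.W1.no}, hence $g^{(N)}\to 0$ strongly in $L^p(\widetilde\Omega\times Q_T)$ for $p<8/7$. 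Only then does the paper pass to $\phi_2\in W^{1,8}(\dom)$ by density, using the moment bound \eqref{3.W1.no} jointly over $\widetilde\Omega\times Q_T$. This buys exactly what your step~(iii) buys, but earlier and in one stroke.

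The concern you raise is real: the moment bound $\sup_N\widetilde\E\|\widetilde u_i^{(N)}\widetilde u_j^{(N)}\|_{L^{8/7}(0,T;W^{1,8/7}(\dom))}^p<\infty$ does \emph{not} give, for $\widetilde\Prob$-a.e.\ fixed $\omega$, a bound $\sup_N\|\na(\widetilde u_i^{(N)}\widetilde u_j^{(N)})(\omega)\|_{L^{8/7}(Q_T)}<\infty$, so your $\omega$-wise extraction of a weak limit $\xi$ is not justified as written. The cleanest repair is precisely the paper's: carry out the weak identification in the product space $L^{8/7}(\widetilde\Omega\times Q_T)$ (where the moment bound does give weak compactness), or equivalently integrate by parts first so that only the strong convergence of the products---your step~(i), which the paper also uses---is needed, and then absorb the passage to general $\phi_2\in W^{1,8}(\dom)$ and the expectation simultaneously via the uniform $L^p(\widetilde\Omega;L^{8/7})$ bound. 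Your step~(i) and step~(iii) are correct and match the paper's ingredients; only the order of operations in~(ii) needs adjusting.
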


\begin{proof}
We infer from estimate \eqref{3.conv} that, for a subsequence,
$\widetilde u_i^{(N)}(x,t)\to \widetilde u_i(x,t)$ and also
$(\widetilde u_i^{(N)}\widetilde u_j^{(N)})(x,t)\to 
(\widetilde u_i\widetilde u_j)(x,t)$ for a.e.\ $(x,t)\in Q_T$
$\widetilde\Prob$-a.s. Then $f^{(N)}:=\widetilde u_i^{(N)}\widetilde u_j^{(N)}
-\widetilde u_i\widetilde u_j\to 0$ a.e.\ in $Q_T$ $\widetilde\Prob$-a.s.
Taking into account the uniform bound for $f^{(N)}$ in
$L^p(\widetilde\Omega;L^{8/7}(0,T;$ $W^{1,8/7}(\dom)))$ from \eqref{3.W1.no},
we conclude the strong convergence $f^{(N)}\to 0$ in $L^p(\widetilde\Omega\times
\dom\times(0,T))$ for any $1\le p<8/7$.
Next, let $g^{(N)}:=f^{(N)}\Delta\phi$ for $\phi\in W^{2,\infty}(\dom)$.
Then $g^{(N)}\to 0$ in $Q_T$ $\widetilde\Prob$-a.s.\ and $g^{(N)}$
is bounded in $L^{8/7}(\widetilde\Omega\times\dom\times(0,T))$. Therefore,
$g^{(N)}\to 0$ strongly in $L^p(\widetilde\Omega\times
\dom\times(0,T))$ for any $1<p<8/7$. This shows that
$$
  \lim_{N\to\infty}\widetilde{\E}\sum_{i,j=1,\,j\neq i}^n\int_0^t
	\bigg|\big\langle(\widetilde u_i^{(N)}\widetilde u_j^{(N)})(s)
	- (\widetilde u_i\widetilde u_j)(s),\Delta\phi\big\rangle\bigg|\d s = 0
$$
for any $\phi\in W^{2,\infty}(\dom)$ and, by integrating by parts,
\begin{align*}
   \lim_{N\to\infty}\widetilde{\E}&\int_0^T\bigg|\sum_{i,j=1,\,j\neq i}^n\int_0^t
	\big\langle\na(\widetilde u_i^{(N)}\widetilde u_j^{(N)})(s)
	- \na(\widetilde u_i\widetilde u_j)(s),\na\phi\big\rangle\d s\bigg|\d t\\
	&\le \lim_{N\to\infty}T\widetilde{\E}\sum_{i,j=1,\,j\neq i}^n\int_0^T
	\bigg|\big\langle(\widetilde u_i^{(N)}\widetilde u_j^{(N)})(s)
	- (\widetilde u_i\widetilde u_j)(s),\Delta\phi\big\rangle\bigg|\d s = 0
\end{align*}
for any $\phi\in W^{2,\infty}(\dom)$ satisfying $\na\phi\cdot\nu=0$ on $\pa\dom$.
By density, the convergence also holds for $\phi\in W^{1,8}(\dom)$ satisfying
$\na\phi\cdot\nu=0$ on $\pa\dom$.
\end{proof}

Lemma \ref{lem.E}, which is needed to show
that $(\widetilde u,\widetilde W)$ is a martingale solution to 
\eqref{1.eq}--\eqref{1.bic}, also holds in the present situation.
The proof is similar to that one of Lemma 10
in \cite{DHJKN20} and uses the previous convergence results, convergences
\eqref{3.conv}, Lemma \ref{lem.uiuj}, and the sublinear growth of the
multiplicative noise.

Defining $\Lambda_i^{(N)}$ and $\Lambda_i$ as in \eqref{2.LamN} and \eqref{2.Lam},
respectively, the same result as in Corollary \ref{coro.E} holds. The proof of
Theorem \ref{thm.no} can now be finished as in Section \ref{ssec.strong}.


\begin{appendix}
\section{Deterministic SKT system without self-diffusion}\label{app}

The proof of the existence of a global weak solution to the deterministic two-species 
SKT system without self-diffusion
in \cite{ChJu06} uses an $L^2\log L^2$ bound coming from the Lotka--Volterra
terms. We claim that the proof can be performed without
this bound. To show this claim, we recall the estimates coming from the
entropy inequality proved in \cite{ChJu06}:
$$
  \|u_i^{(\tau)}\|_{L^\infty(0,T;L^1(\dom))}
	+ \|\na(u_i^{(\tau)})^{1/2}\|_{L^2(Q_T)}
	+ \|\na(u_i^{(\tau)}u_j^{(\tau)})^{1/2}\|_{L^2(Q_T)} \le C,
$$
where $i\neq j$ and $C>0$ does not depend on the approximation
parameter $\tau$. In particular, $(u_i^{(\tau)})$ is bounded in $L^2(0,T;H^1(\dom))$. 
The function $u_i^{(\tau)}$ is the solution to an approximate problem, which
we do not specify here; we refer to \cite{ChJu06}.

First, we show that $(u_i^{(\tau)})$ is bounded in 
$L^{\rho_1}(0,T;W^{1,\rho_1}(\dom))$, where
$\rho_1=(d+2)/(d+1)$. To this end, we deduce from the Gagliardo--Nirenberg inequality 
with $\theta=d/2-d/p$ and $p=2+4/d$ (satisfying $p\theta=2$) that
\begin{align*}
  \|(u_i^{(\tau)})^{1/2}\|_{L^{2+4/d}(Q_T)}^{2+4/d}
	&\le C\int_0^T\|(u_i^{(\tau)})^{1/2}\|_{H^1(\dom)}^{\theta(2d+4)/d}
	\|(u_i^{(\tau)})^{1/2}\|_{L^2(\dom)}^{(1-\theta)(2d+4)/d}\d t \\
	&\le \|u_i^{(\tau)}\|_{L^\infty(0,T;L^1(\dom))}^{(1-\theta)(d+2)/d}\int_0^T
	\|(u_i^{(\tau)})^{1/2}\|_{H^1(\dom)}^2 \d t \le C.
\end{align*}
This bound and the $L^2(Q_T)$ bound for $\na(u_i^{(\tau)})^{1/2}$ show that
$$
  \|\na u_i^{(\tau)}\|_{L^{\rho_1}(Q_T)} = 2\|(u_i^{(\tau)})^{1/2}\|_{L^{2+4/d}(Q_T)}
	\|\na(u_i^{(\tau)})^{1/2}\|_{L^2(Q_T)} \le C.
$$
The claim now follows from the Poincar\'e--Wirtinger inequality and the bound
for $(u_i^{(\tau)})$ in $L^\infty(0,T;L^1(\dom))$. 

Second, we claim that $(\na(u_i^{(\tau)}u_j^{(\tau)})^{1/2})$
is bounded in $L^{\rho_2}(Q_T)$ for $i\neq j$, where $\rho_2=(2d+2)/(2d+1)$. Indeed,
a similar argument as above, using the bounds for $(u_i^{(\tau)}u_j^{(\tau)})^{1/2}$
in $L^\infty(0,T;L^1(\dom))$ and $\na(u_i^{(\tau)}u_j^{(\tau)})^{1/2}$
in $L^2(Q_T)$ as well as the Gagliardo--Nirenberg inequality, shows that 
$((u_i^{(\tau)}u_j^{(\tau)})^{1/2})$ is bounded in $L^{2+2/d}(Q_T)$. 
Therefore, the sequence
$$
  \na(u_i^{(\tau)}u_j^{(\tau)})
	= 2(u_i^{(\tau)}u_j^{(\tau)})^{1/2}\na(u_i^{(\tau)}u_j^{(\tau)})^{1/2}
$$
is bounded in $L^{\rho_2}(Q_T)$, proving the claim.

For the compactness, we also need an estimate for the time derivative:
$$
  \|\pa_t u_i^{(\tau)}\|_{L^{\rho_1}(0,T;W^{1,\rho_1}(\dom)')}
	\le \bigg\|a_{i0}\na u_i^{(\tau)} + \sum_{j=1,\, j\neq i}^n a_{ij}
	\na(u_i^{(\tau)}u_j^{(\tau)})\bigg\|_{L^{\rho_2}(Q_T)} \le C.
$$
(In fact, in the proof of \cite{ChJu06}, we have to replace $\pa_t u_i$
by a discrete time derivative, but this does not change the argument.)
By the Aubin--Lions lemma, there exists a subsequence of $(u_i^{(\tau)})$
(not relabeled) such that $u_i^{(\tau)}\to u_i$ strongly in $L^{\rho_1}(Q_T)$
and a.e.\ in $Q_T$ as $\tau\to 0$. Thus, $u_i^{(\tau)}u_j^{(\tau)}\to u_iu_j$
a.e.\ in $Q_T$. The bound for $(\na(u_i^{(\tau)}u_j^{(\tau)}))$ in $L^{\rho_2}(Q_T)$ 
for $i\neq j$ implies that $\na(u_i^{(\tau)}u_j^{(\tau)})
\rightharpoonup\na(u_iu_j)$ weakly in $L^{\rho_2}(Q_T)$. 
Furthermore, we have the convergences
$\na u_i^{(\tau)}\rightharpoonup \na u_i$ weakly in $L^{\rho_1}(Q_T)$ and 
$\pa_t u_i^{(\tau)}\rightharpoonup\pa_t u_i$ weakly in 
$L^{\rho_2}(0,T;W^{1,\rho_2}(\dom)')$.
These limits allow us to pass to the limit $\tau\to 0$ in the approximate
problem. Moreover, we obtain the regularity results formulated
in Section \ref{sec.wz}. As a corollary, we deduce the following existence
result which extends \cite[Theorem 1]{ChJu06} to the $n$-species no-reaction case.

\begin{theorem}[Existence for the deterministic system]
Let $\Omega\subset\R^d$ ($d\ge 1$), $u^0\in L^\infty(\dom;\R^n)$ with $u_i\ge 0$
a.e.\ in $\dom$, let the detailed-balance condition \eqref{2.dbc} hold, and let 
$a_{i0}>0$, $a_{ii}=0$ for $i=1,\ldots,n$. Then there exists a weak solution 
$u=(u_1,\ldots,u_n)$ to
\begin{align*}
  & \pa_t u_i = \Delta\bigg(a_{i0}u_i + \sum_{j=1,\,j\neq i}^n a_{ij}u_iu_j\bigg)
	\quad\mbox{in }\dom,\ t>0,\ i=1,\ldots,n, \\
  & u_i(0)=u_i^0\quad\mbox{in }\dom, \quad\na u_i\cdot\nu=0
	\quad\mbox{on }\pa\dom,\ t>0,
\end{align*}
satisfying $u_i(t)\ge 0$ a.e.\ in $\dom$, $t>0$ and
$$
  u_i\in L^{\rho_1}(0,T;W^{1,\rho_1}(\dom)), 
	\quad u_iu_j\in L^{\rho_2}(0,T;W^{1,\rho_2}(\dom)),
	\quad \pa_t u_i\in L^{\rho_2}(0,T;W^{1,\rho_2}(\dom)')
$$
for $i=1,\ldots,n$, where $\rho_1=(d+2)/(d+1)$ and $\rho_2=(2d+2)/(2d+1)$.
\end{theorem}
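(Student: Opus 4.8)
The plan is to obtain the solution as the limit $\tau\to 0$ of the approximate solutions $u_i^{(\tau)}$ from the scheme of \cite{ChJu06}, relying exclusively on the a priori bounds assembled above together with the Aubin--Lions lemma; the whole point is that, once the Lotka--Volterra terms are absent, no $L^2\log L^2$ estimate is needed. First I would record the entropy estimates, i.e.\ the uniform bounds for $u_i^{(\tau)}$ in $L^\infty(0,T;L^1(\dom))$, for $\na(u_i^{(\tau)})^{1/2}$ in $L^2(Q_T)$, and for $\na(u_i^{(\tau)}u_j^{(\tau)})^{1/2}$ in $L^2(Q_T)$ with $i\neq j$, which already give uniform $L^2(0,T;H^1(\dom))$ bounds for $(u_i^{(\tau)})^{1/2}$ and for $(u_i^{(\tau)}u_j^{(\tau)})^{1/2}$.

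Next I would upgrade these to gradient bounds on the densities themselves. Using the identity $\na u_i^{(\tau)}=2(u_i^{(\tau)})^{1/2}\na(u_i^{(\tau)})^{1/2}$ together with the Gagliardo--Nirenberg inequality (as carried out above) to control $(u_i^{(\tau)})^{1/2}$ in $L^{2+4/d}(Q_T)$, the H\"older inequality yields a uniform bound for $(u_i^{(\tau)})$ in $L^{\rho_1}(0,T;W^{1,\rho_1}(\dom))$ with $\rho_1=(d+2)/(d+1)$. The same argument applied to the products gives a uniform bound for $\na(u_i^{(\tau)}u_j^{(\tau)})$ in $L^{\rho_2}(Q_T)$ with $\rho_2=(2d+2)/(2d+1)$, where I also use that $(u_i^{(\tau)}u_j^{(\tau)})^{1/2}$ is bounded in $L^{2+2/d}(Q_T)$, hence $u_i^{(\tau)}u_j^{(\tau)}$ is bounded in $L^{1+1/d}(Q_T)$. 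The equation then produces the time-derivative estimate for $\pa_t u_i^{(\tau)}$ in $L^{\rho_2}(0,T;W^{1,\rho_2}(\dom)')$, replacing $\pa_t u_i$ by a discrete difference quotient in the time-discretized scheme.

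With these bounds in hand, the compactness step follows from the Aubin--Lions lemma: the embedding into $L^{\rho_1}(Q_T)$ is compact, so along a subsequence $u_i^{(\tau)}\to u_i$ strongly in $L^{\rho_1}(Q_T)$ and a.e.\ in $Q_T$. This is the crucial step and where the principal difficulty lies, because the only gradient control on the densities is in the weak space $L^{\rho_1}$, and on the products only in $L^{\rho_2}$, so the weak limits of the nonlinear terms must be identified purely from pointwise convergence. The a.e.\ convergence gives $u_i^{(\tau)}u_j^{(\tau)}\to u_iu_j$ a.e.; combined with the uniform $L^{1+1/d}(Q_T)$ bound (note $1+1/d>\rho_2$) and Vitali's theorem, this upgrades to strong convergence of the products in $L^{\rho_2}(Q_T)$, so that the uniformly $L^{\rho_2}$-bounded sequence $\na(u_i^{(\tau)}u_j^{(\tau)})$ converges weakly to the correctly identified limit $\na(u_iu_j)$. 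Likewise $\na u_i^{(\tau)}\rightharpoonup\na u_i$ weakly in $L^{\rho_1}(Q_T)$ and $\pa_t u_i^{(\tau)}\rightharpoonup\pa_t u_i$ weakly in $L^{\rho_2}(0,T;W^{1,\rho_2}(\dom)')$.

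Finally I would pass to the limit in the weak formulation of the approximate problem, the linear term $a_{i0}\na u_i^{(\tau)}$ and the quadratic terms $a_{ij}\na(u_i^{(\tau)}u_j^{(\tau)})$ converging by the weak convergences just established, and the initial datum being attained in $W^{1,\rho_2}(\dom)'$. Nonnegativity of the limit is inherited from the nonnegativity of each $u_i^{(\tau)}$ (a consequence of the boundedness-by-entropy structure of the scheme) together with the a.e.\ convergence. The stated regularity of $u_i$, of $u_iu_j$, and of $\pa_t u_i$ is exactly the collection of uniform bounds, preserved in the limit by weak lower semicontinuity of the norms, which completes the proof.
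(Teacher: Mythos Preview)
Your proposal is correct and follows essentially the same route as the paper: entropy bounds, Gagliardo--Nirenberg plus H\"older to lift to $W^{1,\rho_1}$ and $W^{1,\rho_2}$ control, a time-derivative bound from the equation, Aubin--Lions for strong convergence in $L^{\rho_1}(Q_T)$ and hence a.e., and then identification of the weak limits. The one place where you are slightly more explicit than the paper is the identification of the weak limit of $\na(u_i^{(\tau)}u_j^{(\tau)})$: the paper passes directly from a.e.\ convergence of the products plus the $L^{\rho_2}$ gradient bound to weak convergence of the gradients, whereas you insert Vitali's theorem (using the $L^{1+1/d}(Q_T)$ bound and $1+1/d>\rho_2$) to first get strong $L^{\rho_2}$ convergence of the products themselves, which makes the identification cleaner; this is a harmless elaboration, not a different argument.
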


\end{appendix}



\begin{thebibliography}{11}
\bibitem{Ama89} H.~Amann. Dynamic theory of quasilinear parabolic systems. III. Global 
existence. {\em Math. Z.} 202 (1989), 219--250.

\bibitem{BCJ20} M.~Braukhoff, X.~Chen, and A.~J\"ungel. Corrigendum: Cross-diffusion 
preventing blow-up in the two-dimensional Keller--Segel model. 
{\em SIAM J. Math. Anal.} 52 (2020), 2198--2200.

\bibitem{BGJ13} Z.~Brze\'{z}niak, B.~Goldys, and T.~Jegaraj. Weak solutions of a 
stochastic Landau--Lifshitz--Gilbert equation.{\em Appl. Math. Res. Express} 2013 
(1) (2013), 1--33.

\bibitem{BrMo14} Z.~Brze\'zniak and E.~Motyl. The existence of martingale solutions
to the stochastic Boussinesq equations. {\em Global Stoch. Anal.} 1 (2014), 175--216.

\bibitem{BrOn10} Z.~Brze\'zniak and M.~Ondrej\'at. Stochastic wave equations with 
values in Riemannian manifolds. {\em Stochastic partial differential equations 
and applications, Quad. Mat.} 25 (2010), 65--97.

\bibitem{CPT16} M.~Chekroun, E.~Park, and R.~Temam. The Stampacchia maximum 
principle for stochastic partial differential equations and applications.
{\em J. Diff. Eqs.} 260 (2016), 2926--2972.

\bibitem{CDHJ20} L.~Chen, E.~S.~Daus, A.~Holzinger, and A.~J\"ungel. Rigorous derivation
of population cross-diffusion systems from moderately interacting particle systems.
Submitted for publication, 2020. arXiv:2010.12389.

\bibitem{ChJu04} L.~Chen and A.~J\"ungel. Analysis of a multi-dimensional parabolic 
population model with strong cross-diffusion. 
{\em SIAM J. Math. Anal}. 36 (2004), 301--322.

\bibitem{ChJu06} L.~Chen and A.~J\"ungel. Analysis of a parabolic cross-diffusion 
population model without self-diffusion.{\em J. Diff. Eqs.} 224 (2006), 39--59.

\bibitem{CDJ18} X.~Chen, E.~Daus, and A.~J\"ungel. Global existence analysis of 
cross-diffusion population systems for multiple species. 
{\em Arch. Ration. Mech. Anal.} 227 (2018), 715--747.

\bibitem{DLM14} L.~Desvillettes, T.~Lepoutre, and A.~Moussa. Entropy, duality, and 
cross diffusion. {\em SIAM J. Math. Anal.} 46 (2014), 820--853.

\bibitem{DLMT15} L.~Desvillettes, T.~Lepoutre, A.~Moussa, and A.~Trescases.
On the entropic structure of reaction-cross diffusion systems.
{\em Commun. Partial Diff. Eqs.} 40 (2015), 1705--1747.

\bibitem{Deu87} P.~Deuring. An initial-boundary value problem for a certain 
density-dependent diffusion system. {\em Math. Z.} 194 (1987), 375--396.

\bibitem{DJZ19} G.~Dhariwal, A.~J\"ungel, and N.~Zamponi. Global martingale solutions 
for a stochastic population cross-diffusion system. 
{\em Stoch. Process. Appl.} 129 (2019), 3792--3820.

\bibitem{DHJKN20} G.~Dhariwal, F.\,Huber, A.~J\"ungel, C.~Kuehn, and A.~Neam\c{t}u. 
Global martingale solutions for quasilinear SPDEs via the boundedness-by-entropy 
method. To appear in {\em Ann. Inst. H. Poincar\'e B}, 2020. arXiv:1909.08892.

\bibitem{Dre08} M.~Dreher. Analysis of a population model with strong cross-diffusion 
in unbounded domains. {\em Proc. Roy. Soc. Edinb. Sec. A} 138 (2008), 769--786.

\bibitem{Ebe19} A.~Eberle. {\em Stochastic Analysis}. Lecture Notes, University of
Bonn, 2019. https://wt.iam.uni-bonn.de/skripten.

\bibitem{FlGa95} F.~Flandoli and D.~Gatarek. Martingale and stationary solutions for
stochastic Navier--Stokes equations. {\em Probab. Theory Relat. Fields} 102
(1995), 367--391.

\bibitem{GGJ03} G.~Galiano, M.~Gar{\'z}on, and A.~J\"ungel. Semi-discretization in time 
and numerical convergence of solutions of a nonlinear cross-diffusion population model. 
{\em Numer. Math.} 93 (2003), 655--673.


\bibitem{IkWa89} N.~Ikeda and S.~Watanabe. {\em Stochastic Differential Equations
and Diffusion Processes}. Second edition. North-Holland, Amsterdam, 1989.

\bibitem{Jue15} A.~J\"ungel. The boundedness-by-entropy method for cross-diffusion
systems. {\em Nonlinearity} 28 (2015), 1963--2001.

\bibitem{Jue16} A.~J\"ungel. {\em Entropy Methods for Diffusive Partial Differential
Equations}. Springer Briefs Math., Springer, 2016.

\bibitem{Kim84} J.~Kim. Smooth solutions to a quasi-linear system of diffusion 
equations for a certain population model. {\em Nonlin. Anal.} 8 (1984), 1121--1144.

\bibitem{KuNe20} C.~Kuehn and A.~Neam\c tu. Pathwise mild solutions for quasilinear 
stochastic partial differential equations. {\em J. Diff. Eqs.} 269 (2020), 2185--2227.

\bibitem{LeMo17} T.~Lepoutre and A.~Moussa. Entropic structure and duality for multiple 
species cross-diffusion systems. {\em Nonlin. Anal.} 159 (2017), 298--315.

\bibitem{LNW98} Y.~Lou, W.-M.~Ni, and Y.~Wu. On the global existence of a 
cross-diffusion system. {\em Discrete Contin. Dyn. Syst.} 4 (1998), 193--203.

\bibitem{Mou20} A.~Moussa. From nonlocal to classical Shigesada--Kawasaki--Teramoto 
systems: triangular case with bounded coefficients. 
{\em SIAM J. Math. Anal.} 52 (2020), 42--64.

\bibitem{PrRo07} C.~Pr\'ev\^ot and M.~R\"ockner. {\em A Concise Course on
Stochastic Partial Differential Equations}. Lecture Notes Math. 1905.
Springer, Berlin, 2007.

\bibitem{SKT79} N.~Shigesada, K.~Kawasaki, and E.~Teramoto. Spatial segregation 
of interacting species. {\em J. Theor. Biol.} 79 (1979), 83--99.

\bibitem{Sim87} J.~Simon. Compact sets in the space $L^p(0,T;B)$. 
{\em Ann. Math. Pura. Appl.} 146 (1987), 65--96.

\bibitem{WoZa65} E.~Wong and M.~Zakai. On the convergence of ordinary integrals 
to stochastic integrals. {\em Ann. Math. Statist.} 36 (1965), 1560--1564.

\bibitem{ZaJu17} N.~Zamponi and A.~J\"ungel. Analysis of degenerate cross-diffusion 
population models with volume filling. {\em Ann. Inst. H. Poincar\'e Anal. Non 
Lin\'eaire} 34 (2017), 1--29. (Erratum: 34 (2017), 789--792.)

\end{thebibliography}
\end{document}